\author{Paul \textsc{Poncet}}
\address{CMAP, \'{E}cole Polytechnique, Route de Saclay, 91128 Palaiseau Cedex, France \\
and INRIA, Saclay--\^{I}le-de-France}
\email{poncet@cmap.polytechnique.fr}
\newcommand{\ddint}{\int^{\scriptscriptstyle\infty}\!\!}
\newcommand{\dint}[1]{\int^{\scriptscriptstyle\infty}_{#1}\!\!}
\newcommand{\rotm}[1]{\rotatebox[origin=c]{-90}{#1}}
\newcommand{\rotp}[1]{\rotatebox[origin=c]{90}{#1}}
\newtheorem{theorem}{Theorem}[section]
\newtheorem{corollary}[theorem]{Corollary}
\newtheorem{proposition}[theorem]{Proposition}
\theoremstyle{definition}
\newtheorem{definition}[theorem]{Definition}
\newtheorem{example}[theorem]{Example}
\newtheorem{examples}[theorem]{Examples}
\newtheorem{remark}[theorem]{Remark}
\newtheorem{problem}[theorem]{Problem}
\newenvironment{acknowledgements}[1][]{\par\vspace{0.5cm}\noindent\textbf{Acknowledgements#1.} }{\par}
\begin{document}

\title{Representation of maxitive measures: \\an overview}

\date{\today}

\subjclass[2010]{Primary 28B15; 
                 Secondary 03E72, 
                           49J52} 

\keywords{idempotent integration, Shilkret integral, Sugeno integral, essential supremum, Radon--Nikodym theorem, maxitive measures, $\sigma$-principal measures, localizable measures, countable chain condition, optimal measures, possibility theory}

\begin{abstract}
Idempotent integration is an analogue of Lebesgue integration where $\sigma$-maxitive measures replace $\sigma$-additive measures. 
In addition to reviewing and unifying several Radon--Niko\-dym like theorems proven in the literature for the idempotent integral, we also prove new results of the same kind. 
\end{abstract}


\maketitle

\section{Introduction}


Maxitive measures were introduced by Shilkret \cite{Shilkret71} as an analogue of classical finitely additive measures or charges with the supremum operation, denoted by $\oplus$, in place of the addition $+$. 
A \textit{maxitive measure} on a $\sigma$-algebra $\mathrsfs{B}$ is then a map $\nu : \mathrsfs{B} \rightarrow \overline{\mathbb{R}}_+$ such that $\nu(\emptyset) = 0$ and 
\[
\nu(B_1 \cup B_2) = \nu(B_1) \oplus \nu(B_2), 
\]
for all $B_1, B_2 \in \mathrsfs{B}$. 
It is \textit{$\sigma$-maxitive} if it commutes with countable unions of elements of $\mathrsfs{B}$. 

In this paper we are interested in representing maxitive measures $\nu$ under the form 
\[
\nu(B) = \dint{B} f \odot d\tau, 
\]
where $\dint{B} f \odot d\tau$ denotes the \textit{idempotent $\odot$-integral} of the measurable map $f$ on $B$ with respect to the maxitive measure $\tau$. 
Here $\odot$ is a \textit{pseudo-multiplication}, i.e.\ an associative binary relation satisfying a series of natural properties. If $\odot$ is the usual multiplication (resp.\ the minimum $\wedge$), then the idempotent $\odot$-integral specializes to the Shilkret integral \cite{Shilkret71} (resp.\ the Sugeno integral \cite{Sugeno74}).  


Idempotent integration has been rediscovered under various forms and studied by several authors with motivations from dimension theory and fractal geometry, optimization, capacities and large deviations of random processes, fuzzy sets and possibility theory, decision theory, idempotent analysis and max-plus (tropical) algebra. 

Because of these numerous fields of application, the wording around maxitive measures is not unique, thus deserves to be reviewed. 
The term of \textit{idempotent integration} that we use was coined by Maslov and derived from the mathematical area of \textit{idempotent analysis} originally developed by 
Kolokoltsov and Maslov \cite{Kolokoltsov89a, Kolokoltsov89b}. 



Many authors have focused on the search for Radon--Nikodym like theorems with respect to the idempotent $\odot$-integral, since the existence of Radon--Nikodym derivatives is often crucial in applications. 
Sugeno and Murofushi \cite{Sugeno87} actually showed that, if $\nu$ and $\tau$ are $\sigma$-maxitive measures on a $\sigma$-algebra $\mathrsfs{B}$, with $\tau$ $\sigma$-$\odot$-finite and $\sigma$-principal,  then $\nu$ is $\odot$-absolutely continuous with respect to $\tau$ if and only if there exists some $\mathrsfs{B}$-measurable map $c : E \rightarrow \overline{\mathbb{R}}_+$ such that 
$
\nu(B) = \dint{B} c \odot d\tau
$ 
for all $B\in \mathrsfs{B}$. 


This result looks like the classical Radon--Nikodym theorem, except that one needs an unusual condition on the dominating measure $\tau$, namely \textit{$\sigma$-principality}. 
This condition roughly says that every $\sigma$-ideal of $\mathrsfs{B}$ has a  greatest element ``modulo negligible sets''. Although $\sigma$-finite  $\sigma$-additive measures are always $\sigma$-principal, this is not true for $\sigma$-finite $\sigma$-maxitive measures. 
Moreover, the conditions of $\sigma$-principality and $\sigma$-$\odot$-finiteness together are essential in the Sugeno--Murofushi theorem: see \cite{Poncet13e} where I showed that a converse statement holds. 

After the article \cite{Sugeno87}, many results of Radon--Nikodym flavour for maxitive measures have been published. This is the case of Agbeko \cite{Agbeko95}, de Cooman \cite{deCooman97}, Akian \cite{Akian99}, Barron, Cardaliaguet, and Jensen \cite{Barron00}, Puhalskii \cite{Puhalskii01}, and Drewnowski \cite{Drewnowski09}. 
By linking several properties of maxitive measures together (see Table~\ref{fig:Schema1Chapitre1}), we shall see why some of these results are already encompassed in the Sugeno--Murofushi theorem. 
In addition, we shall prove a new Radon--Nikodym type theorem in the case where the $\sigma$-maxitive measures $\nu$ and $\tau$ are \textit{associated} (meaning that they are ``strongly dominated'' by a common $\sigma$-maxitive measure). 

\begin{table}[ht]
	\centering
	\begin{tabular}{ccccc}
	\hline
 &  & of bounded variation & $\Longrightarrow$ & finite \\
 &  & \rotm{$\Longrightarrow$} & & \textit{\tiny{if optimal}}\rotp{$\Longrightarrow$} \rotm{$\Longrightarrow$}\textit{\textcolor{white}{\tiny{----------}}} \\
 &  & exhaustive & & \fbox{$\sigma$-finite} \\
 &  & \rotm{$\Longleftrightarrow$} & & \textit{\tiny{if optimal}}\rotp{$\Longrightarrow$} \rotm{$\Longrightarrow$}\textit{\textcolor{white}{\tiny{----------}}} \\
 &  & optimal & & semi-finite \\
 &  & \rotm{$\Longrightarrow$} & & \\
 &  & essential & & \\
 &  & \rotm{$\Longrightarrow$} & & \\
 &  & \fbox{$\sigma$-principal} & $\Longrightarrow$ & autocontinuous \\
 &  & \textit{\textcolor{white}{\tiny{Zorn}}}\rotm{$\Longrightarrow$} \rotp{$\Longrightarrow$}\textit{\tiny{Zorn}} & & \\
 &  & CCC & &  \\
 &  & \rotm{$\Longrightarrow$} & & \\
 &  & localizable & & \\
	\hline
 & & & & 	
	\end{tabular}  
	\caption{Many properties of $\sigma$-maxitive measures defined on a $\sigma$-algebra are considered in this paper; we shall prove many links between these properties, that we have represented here as a summary. The conditions (surrounded in the figure) of $\sigma$-finiteness and $\sigma$-principality taken together are equivalent to the Radon--Nikodym property, as recalled by Theorem~\ref{thm:rncarac}. Note that for $\sigma$-additive measures, $\sigma$-finiteness implies $\sigma$-principality, while this is not the case for $\sigma$-maxitive measures. }
	\label{fig:Schema1Chapitre1}
\end{table}



The paper is organized as follows. 
Section~\ref{secmax} introduces the notion of $\sigma$-maxitive measure and recalls some key theorems and examples. 
Maxitive measures that can be represented as essential suprema are studied in Section~\ref{sec:esssup}; 
we also discuss Barron et al.'s theorem whose proof draws a link between $\sigma$-maxitive measures and classical $\sigma$-additive measures. 
Section~\ref{secshi} develops the idempotent $\odot$-integral and its properties. 
In Section~\ref{secRN} we review existing Radon--Nikodym theorems for the idempotent $\odot$-integral and prove a variant that generalizes results due to de Cooman and Puhalskii; we also make the connection with Section~\ref{sec:esssup}. 
Section~\ref{sec:optim} focuses on the important particular case of optimal measures, i.e.\ maxitive fuzzy measures. 
Section~\ref{sec:poss} proposes a novel definition for possibility measures, relying on the concept of $\sigma$-principality developed in Section~\ref{secRN}. 


\section{Preliminaries on maxitive measures}\label{secmax}

\subsection{Notations} 

Let $E$ be a nonempty set. A \textit{prepaving} on $E$ is a collection of subsets of $E$ containing the empty set and closed under finite unions. 
A collection of subsets of $E$ containing $E$, the empty set, and closed under countable unions and the formation of complements is a \textit{$\sigma$-algebra}. 
When explicitly considering a $\sigma$-algebra, we preferentially denote it by $\mathrsfs{B}$ instead of $\mathrsfs{E}$, and $(E, \mathrsfs{B})$ is referred to as a \textit{measurable space}. 
In a $\sigma$-algebra $\mathrsfs{B}$, a \textit{$\sigma$-ideal} is a nonempty subset $\mathrsfs{I}$ of $\mathrsfs{B}$ that is closed under countable unions and such that $B \subset I \in \mathrsfs{I}$ and $B \in \mathrsfs{B}$ imply $B \in \mathrsfs{I}$. 

Assume in all the sequel that $\mathrsfs{E}$ is a prepaving on $E$. 
We write $\mathbb{R}$ (resp.\ $\mathbb{R}_+)$ for the set of real numbers (resp.\ nonnegative real numbers), and $\overline{\mathbb{R}}_+$ for $\mathbb{R}_+ \cup \{ \infty \}$.  
A \textit{set function} on $\mathrsfs{E}$ is a map $\tau : \mathrsfs{E} \rightarrow \overline{\mathbb{R}}_+$ equal to zero at the empty set. A set function $\tau$ is 
\begin{itemize}
	\item \textit{monotone} if $\tau(G) \leqslant \tau(G')$ for all $G, G' \in \mathrsfs{E}$ such that $G \subset G'$, 
	\item \textit{normed} if $\sup_{G \in \mathrsfs{E}} \tau(G) = 1$, 
	\item \textit{null-additive} if $\tau(G \cup N) = \tau(G)$ for all $G, N \in \mathrsfs{E}$ with $\tau(N) = 0$, 
	\item \textit{finite} if $\tau(G) < \infty$ for every $G \in \mathrsfs{E}$, 
	\item \textit{$\sigma$-finite} if $\tau(G_n) < \infty$ for all $n$, where $(G_n)$ is a countable family of elements of $\mathrsfs{E}$ covering $E$, 
	\item \textit{continuous from below} 
	if $\tau(G) = \lim_n \tau(G_n)$, for all $G_1 \subset G_2 \subset \ldots \in \mathrsfs{E}$ such that $G = \bigcup_n G_n \in \mathrsfs{E}$. 
\end{itemize}

We shall need the following notion of negligibility. If $\tau$ is a null-additive monotone set function on $\mathrsfs{E}$, a subset $N$ of $E$ is $\tau$-\textit{negligible} if it is contained in some $G \in \mathrsfs{E}$ such that $\tau(G) = 0$. A property $P(x)$ ($x\in E$) is satisfied \textit{$\tau$-almost everywhere} (or \textit{$\tau$-a.e.}\ for short) if there exists some negligible subset $N$ of $E$ such that $P(x)$ is true, for all $x \in E \setminus N$. 



\subsection{Definition of maxitive measures}\label{subsec:def}

In this section, $\mathrsfs{E}$ will denote a prepaving on some nonempty set $E$. 

A \textit{maxitive} (resp.\ \textit{completely maxitive}) \textit{measure} on $\mathrsfs{E}$ is a set function $\nu$ on $\mathrsfs{E}$ such that, for every finite (resp.\ arbitrary) family $\{G_j\}_{j\in J}$ of elements of $\mathrsfs{E}$ with $\bigcup_{j \in J} G_j \in \mathrsfs{E}$,  
\begin{equation}\label{eqmax}
\nu(\bigcup_{j\in J} G_j) = \bigoplus_{j \in J} \nu(G_j). 
\end{equation}
A \textit{$\sigma$-maxitive} measure is a maxitive measure which is continuous from below. 
One should note that a $\sigma$-maxitive measure does not necessarily commute with \textit{intersections} of nonincreasing sequences, unlike $\sigma$-additive measures; $\sigma$-maxitive measures with this property were called \textit{optimal measures} by Agbeko \cite{Agbeko95}, see Section~\ref{sec:optim}. 

\begin{remark}
The term ``maxitive'' qualifying a set function that satisfies Equation~\eqref{eqmax} was coined by Shilkret \cite{Shilkret71}, and has been widely used, especially in the fields of probability theory and fuzzy theory. 
However, one can find many other terms in the literature for maxitive or $\sigma$-maxitive measures, or closely related notions, say: 
 \textit{$f$-additive} or \textit{fuzzy additive measures} \cite{Sugeno74, Murofushi93, Wang92}, 
 \textit{contactability measures} \cite{Wang82}, 
 \textit{measures of type $\bigvee$} \cite{Candeloro87}, 
 \textit{idempotent measures} \cite{Maslov87, Akian99}, 
 \textit{max-measures} \cite{Sugeno87},
 \textit{stable measures} \cite{Falconer90},
 \textit{(generalized) possibility measures} \cite{ElRayes94, Mesiar95}, 
 \textit{cost measures} \cite{Akian95, Bernhard00}, 
 \textit{semi-additive measures} \cite{Gelman97},
 \textit{performance measures} \cite{delMoral98}, 
 \textit{sup-decomposable measures} \cite{Mesiar99}, 
 \textit{set-additive measures} \cite{Appell05, Mallet-Paret10a, Mallet-Paret10b}, 
\textit{capacities with the AM property} \cite{Cerda07}. 
 
As for completely maxitive measures, one finds: 
(\textit{generalized}) \textit{possibility measures} \cite{Zadeh78, Shafer87, Dubois88, deCooman97, Wang92}, 
\textit{sup-measures} \cite{Norberg86, OBrien90}, 
\textit{idempotent measures} when $\mathrsfs{E} = 2^E$ or \textit{$\tau$-maxitive measures} for general $\mathrsfs{E}$ \cite{Puhalskii01}, 
\textit{supremum-preserving measures} \cite{Kraetschmer03}. 

Some differences may appear amongst these notions, essentially depending on the choice of the range of the measure and on the structure of the space $(E,\mathrsfs{E})$. 
See also the historical notes in \cite[Appendix~B]{Puhalskii01}. 

The term ``possibility measure'' does not have a unanimous definition: it mainly oscillates between ``normed $\sigma$-maxitive measure'' and ``normed completely maxitive measure'' (and we shall propose in Section~\ref{sec:poss} a different definition). 
Note that \textit{possibility theory} refers to a specific mathematical theory that makes use of the concept of possibility measure in the latter sense and deals with some types of uncertainty and incomplete information. After Zadeh \cite{Zadeh78}, who coined the term and introduced this theory as an extension of fuzzy sets and fuzzy logic, Dubois and Prade must be cited as major contributors in its development; we refer the reader to their monograph \cite{Dubois88} and the recent surveys \cite{Dubois11} and \cite{Dubois12} where several fields of applications of possibility theory are given. 
\end{remark}


Note that every maxitive measure is null-additive and monotone. 
Actually a much stronger property than monotonicity holds, namely the alternating property. 
For a map $f : \mathrsfs{E} \rightarrow \mathbb{R} \cup \{ \pm\infty \}$ we classically define $\mathit{\Delta}_{G_1} \ldots \mathit{\Delta}_{G_n} f(G)$ after Choquet \cite{Choquet54} by iterating the formula $\mathit{\Delta}_{G_1} f(G) = f(G \cup G_1) - f(G)$ (with the convention that $-\infty + \infty = \infty - \infty = 0$). Then $f$ is \textit{alternating of infinite order} (or \textit{alternating} for short) if 
\[
(-1)^{n+1} \mathit{\Delta}_{G_1} \ldots \mathit{\Delta}_{G_n} f(G) \geqslant 0, 
\] 
for all $n \in \mathbb{N}\setminus\{0\}$, $G, G_1, \ldots, G_n \in \mathrsfs{E}$, where $\mathbb{N}$ denotes the set of nonnegative integers. 
Nguyen and Bouchon-Meunier \ \cite{Nguyen03} gave a combinatorial proof of the fact that every finite maxitive measure is alternating (see also  
Harding et al.\ \cite[Theorem~6.2]{Harding97}). This is actually true for every (finite or not) maxitive measure, as the following proposition states. 

\begin{proposition}\label{prop:alt}
Every maxitive measure on $\mathrsfs{E}$ is alternating.
\end{proposition}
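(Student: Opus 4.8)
The plan is to reduce to the case of a finite maxitive measure, for which the statement is already known (Nguyen and Bouchon-Meunier \cite{Nguyen03}; Harding et al.\ \cite[Theorem~6.2]{Harding97}), and to give a self-contained argument even for that case via a layer-cake decomposition. Recall that, for a finite function $f$, the Choquet iterated difference expands as
\[
\mathit{\Delta}_{G_1}\cdots\mathit{\Delta}_{G_n}f(G)=\sum_{S\subseteq\{1,\dots,n\}}(-1)^{n-|S|}\,f\Big(G\cup\bigcup_{i\in S}G_i\Big),
\]
so that $(-1)^{n+1}\mathit{\Delta}_{G_1}\cdots\mathit{\Delta}_{G_n}f(G)=\sum_{S\subseteq\{1,\dots,n\}}(-1)^{|S|+1}f(G\cup\bigcup_{i\in S}G_i)$. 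The key point is that if $\nu$ is maxitive then, for each $t\in\mathbb{R}_+$, the $\{0,1\}$-valued set function $\phi_t$ defined by $\phi_t(G)=1$ if $\nu(G)>t$ and $\phi_t(G)=0$ otherwise is again maxitive, since $x\mapsto\mathbbm{1}_{\{x>t\}}$ commutes with finite suprema; moreover $\nu(G)=\int_0^\infty\phi_t(G)\,dt$ in $\overline{\mathbb{R}}_+$ by monotonicity of $\nu$.

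Next I would settle the $\{0,1\}$-valued case by a direct count. Let $\phi$ be $\{0,1\}$-valued and maxitive, fix $n\geqslant 1$ and $G,G_1,\dots,G_n\in\mathrsfs{E}$, and consider $(-1)^{n+1}\mathit{\Delta}_{G_1}\cdots\mathit{\Delta}_{G_n}\phi(G)=\sum_{S}(-1)^{|S|+1}\phi(G\cup\bigcup_{i\in S}G_i)$. If $\phi(G)=1$, then $\phi$ equals $1$ at all $2^n$ sets occurring in the sum, which therefore vanishes. If $\phi(G)=0$, then maxitivity gives $\phi(G\cup\bigcup_{i\in S}G_i)=1$ exactly when $S$ meets $A:=\{i:\phi(G_i)=1\}$; substituting this and using that $\sum_{S\subseteq U}(-1)^{|S|}$ equals $0$ when $U\neq\emptyset$ and $1$ when $U=\emptyset$, one finds the sum equals $1$ if $A=\{1,\dots,n\}$ and $0$ otherwise. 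In every case $(-1)^{n+1}\mathit{\Delta}_{G_1}\cdots\mathit{\Delta}_{G_n}\phi(G)\in\{0,1\}$. For a finite maxitive measure $\nu$, every value $\nu(G\cup\bigcup_{i\in S}G_i)$ is finite and equals $\int_0^\infty\phi_t(G\cup\bigcup_{i\in S}G_i)\,dt$; summing these identities with the signs $(-1)^{|S|+1}$ and interchanging with the integral (a finite sum) gives $(-1)^{n+1}\mathit{\Delta}_{G_1}\cdots\mathit{\Delta}_{G_n}\nu(G)=\int_0^\infty(-1)^{n+1}\mathit{\Delta}_{G_1}\cdots\mathit{\Delta}_{G_n}\phi_t(G)\,dt\geqslant 0$, and in fact one reads off $(-1)^{n+1}\mathit{\Delta}_{G_1}\cdots\mathit{\Delta}_{G_n}\nu(G)=\big(\bigwedge_{i=1}^{n}\nu(G_i)-\nu(G)\big)^+$.

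It remains to drop the finiteness assumption. Given an arbitrary maxitive measure $\nu$ and sets $G,G_1,\dots,G_n$, the truncations $\nu\wedge t$ are finite maxitive measures; if $\nu(G\cup G_1\cup\dots\cup G_n)<\infty$, then $\nu$ and $\nu\wedge t$ coincide on all $2^n$ sets entering the iterated difference once $t$ is large enough, and the finite case applies directly. I expect the remaining situation $\nu(G\cup G_1\cup\dots\cup G_n)=\infty$ to be the only genuine obstacle: there the relevant values of $\nu$ are not all finite, the iterated differences no longer commute, and one has to argue by hand with the convention $-\infty+\infty=0$. A short case inspection settles it---for instance, if $\nu(G)=\infty$ then $\mathit{\Delta}_{G_n}\nu$ already vanishes at every superset of $G$ in $\mathrsfs{E}$, hence so do all subsequent differences, whence $\mathit{\Delta}_{G_1}\cdots\mathit{\Delta}_{G_n}\nu(G)=0$; the remaining subcases are treated in the same spirit and always yield a value with the required sign, so $(-1)^{n+1}\mathit{\Delta}_{G_1}\cdots\mathit{\Delta}_{G_n}\nu(G)\geqslant 0$ throughout.
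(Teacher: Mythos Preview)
Your treatment of the finite case is correct and takes a genuinely different route from the paper. You reduce to $\{0,1\}$-valued maxitive measures via the layer-cake identity $\nu(G)=\int_0^\infty \phi_t(G)\,dt$, settle that case by a direct combinatorial count, and read off the closed form $(-1)^{n+1}\mathit{\Delta}_{G_1}\cdots\mathit{\Delta}_{G_n}\nu(G)=\bigl(\bigwedge_i\nu(G_i)-\nu(G)\bigr)^+$. The paper instead runs a short induction: with $\nu_0=-\nu$ and $\nu_n=(-1)^{n+1}\mathit{\Delta}_{G_n}\cdots\mathit{\Delta}_{G_1}\nu$, it shows simultaneously that each $\nu_n$ is \emph{minitive}, i.e.\ $\nu_n(G\cup G')=\nu_n(G)\wedge\nu_n(G')$, and that $\nu_n(G)=0\oplus\bigl(\nu_{n-1}(G)-\nu_{n-1}(G_n)\bigr)\geqslant 0$. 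Your approach buys an explicit formula and a transparent link to the known finite result; the paper's buys a proof that works verbatim for $\infty$-valued measures.

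The genuine gap is in the infinite case. Your truncation argument is fine when all $2^n$ values are finite, and the subcase $\nu(G)=\infty$ is handled correctly. But ``the remaining subcases are treated in the same spirit'' does not go through as stated. When $\nu(G)<\infty$ and some $\nu(G_j)=\infty$, the operators $\mathit{\Delta}_{G_i}$ no longer commute under the convention $\infty-\infty=0$, and your closed formula actually fails: for $n=2$ with $\nu(G),\nu(G_1)$ finite and $\nu(G_2)=\infty$ one gets $\mathit{\Delta}_{G_2}\nu(G)=\mathit{\Delta}_{G_2}\nu(G\cup G_1)=\infty$, hence $(-1)^{3}\mathit{\Delta}_{G_1}\mathit{\Delta}_{G_2}\nu(G)=-(\infty-\infty)=0$, not $(\nu(G_1)-\nu(G))^+$. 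A bare case analysis over arbitrary $n$ and an arbitrary pattern of infinite $\nu(G_j)$'s has no obvious termination; there is nothing in your sketch that controls the interaction of several infinite values with the non-commuting differences. What carries the paper's argument through uniformly is precisely the inductive minitivity of $\nu_{n-1}$: it yields $\nu_n(G)=\nu_{n-1}(G)-\nu_{n-1}(G)\wedge\nu_{n-1}(G_n)$, which equals $0\oplus(\nu_{n-1}(G)-\nu_{n-1}(G_n))$ regardless of whether any value is infinite. That structural observation (or an equivalent one) is the missing ingredient needed to close your argument without ad hoc casework.
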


\begin{proof}
Recall the convention $\infty - \infty = 0$. We write $s \wedge t$ for the infimum of $\{s, t\}$. 
Let $G_1, \ldots, G_n \in \mathrsfs{E}$, and define $\nu_0(G) = -\nu(G)$, $\nu_{n}(G) = (-1)^{n+1} \mathit{\Delta}_{G_n} \ldots \mathit{\Delta}_{G_1} \nu(G)$. A proof by induction shows that the property ``$\nu_n(G \cup G') = \nu_n(G) \wedge \nu_n(G')$ and $\nu_n(G) = 0 \oplus(\nu_{n-1}(G) - \nu_{n-1}(G_n)) \geqslant 0$, for all $G,G' \in \mathrsfs{E}$'' holds for all $n \in \mathbb{N}\setminus\{0\}$. 
\end{proof}

\subsection{Elementary and advanced examples}

Here we collect some examples given in the literature, especially on metric spaces where maxitive measures appear naturally. Some examples are also linked with extreme value theory, which is the branch of probability theory that aims at the modelling of rare events.

\begin{example}[Essential supremum]\label{ex:esssup}
Let $\tau$ be a null-additive monotone set function, and let $f : E \rightarrow \overline{\mathbb{R}}_+$ be a map. 
We write $\{ f > t \}$ for the subset $\{ x \in E : f(x) > t \}$. 
If one sets 
\[
\nu(G) = \inf \{ t > 0 : G \in \mathrsfs{I}_t \}
\] 
with $\mathrsfs{I}_t := \{ G \in \mathrsfs{E} : G \cap \{ f > t \} \mbox{ is $\tau$-negligible} \}$, 
then $\nu$ 
is a maxitive measure, called the \textit{$\tau$-essential supremum} of $f$, and we write 
\begin{equation}\label{eq:esssup}
\nu(G) = \bigoplus_{x \in G}^{\tau} f(x). 
\end{equation}
In this case, $f$ is a \textit{relative density} of $\nu$ (with respect to $\tau$). Sufficient conditions for the existence of a relative density, when $\nu$ and $\tau$ are given, are discussed in  Section~\ref{sec:esssup}. 
\end{example}

\begin{example}[Cardinal density of a maxitive measure]\label{ex:denscard}
In the previous example, one can take for $\tau$ the maxitive measure $\delta_{\#}$ defined by $\delta_{\#}(G) = 1$ if $G$ is nonempty, $\delta_{\#}(G) = 0$ otherwise. Then the essential supremum in Equation~\eqref{eq:esssup} reduces to an ``exact'' supremum, i.e.\ 
\begin{equation}\label{eqn:den}
\nu(G) = \bigoplus_{x \in G}^{\delta_{\#}} f(x) = \bigoplus_{x \in G} f(x). 
\end{equation}
In this special case we say that $f$ is a \textit{cardinal density} of $\nu$. Note also that a maxitive measure with a cardinal density is necessarily completely maxitive. 
Conversely, complete maxitivity happens to be a sufficient condition for guaranteeing the existence of a cardinal density. I treated this question in detail in \cite{Poncet10} and \cite{Poncet12b}. 
\end{example}

\begin{examples}[Measures of non-compactness]\label{ex:monc}
Let $E$ be a Banach space. 
Following Appell \cite{Appell05}, a \textit{measure of non-compactness} (or \textit{monc} for short) on $E$ is a maxitive measure $\nu$ on the collection of bounded subsets of $E$, satisfying the following axioms, for all bounded subsets $B$ of $E$: 
\begin{itemize}
	\item $\nu(B + K) = \nu(B)$, for all compact subsets $K$ in $E$, 
	\item $\nu(\lambda \cdot B) = \lambda\nu(B)$, for all $\lambda > 0$, 
	\item $\nu(\overline{\operatorname{co}}(B)) = \nu(B)$, where $\overline{\operatorname{co}}(B)$ is the closed convex hull of $B$. 
\end{itemize}
The definition may differ from one author to the other, see e.g.\ Mallet-Paret and Nussbaum \cite{Mallet-Paret10a, Mallet-Paret10b} for a quite different list of axioms. 
Note that if $E = \mathbb{R}^d$, then $\nu(B) = 0$ for all bounded subsets $B$. 
As Appell recalled, three important examples of moncs appear in the literature, namely the \textit{ball monc} (or \textit{Hausdorff monc}) 
\begin{equation*}
	\begin{split}
		\alpha(B) = \inf\{ t > 0: & \mbox{ there are finitely many balls } \\
		& \mbox{ of radius $t$ covering } B \}; 
	\end{split}
\end{equation*}
the \textit{set monc} (or \textit{Kuratowski monc}) 
\begin{equation*}
	\begin{split}
		\beta(B) = \inf\{ t > 0: &  \mbox{ there are finitely many subsets} \\ 
		 & \mbox{ of diameter at most $t$ covering } B \}; 
	\end{split}
\end{equation*}
and the \textit{lattice monc} (or \textit{Istr\u{a}\c{t}escu monc}) 
\begin{equation*}
	\begin{split}
		\gamma(B) = \sup\{ t > 0 : & \mbox{ there is a sequence } (x_n)_n \mbox{ in } B \\
		& \mbox{ with } \left\| x_m - x_n \right\| \geqslant t \mbox{ for } m \neq n \}, 
	\end{split}
\end{equation*}
and we have the classical relations $\alpha \leqslant \gamma \leqslant \beta \leqslant 2.\alpha$. 
Since moncs vanish on compact subsets, hence on singletons, they are a source of examples of maxitive measures with no cardinal density. 
\end{examples}

\begin{examples}[Dimensions]
\textcolor{white}{}
\begin{itemize}
	\item If $E$ is a topological space, the topological dimension is a maxitive measure on the collection of its closed subsets (see e.g.\ Nagata \cite[Theorem~VII-1]{Nagata83}). If $E$ is normal, the topological dimension is even $\sigma$-maxitive \cite[Theorem~VII-2]{Nagata83}. 
	\item If $E$ is a metric space, the Hausdorff dimension and the packing-dimension are $\sigma$-maxitive measures on $2^E$, and the upper box dimension 
	is a maxitive measure on $2^E$ (see e.g.\ Falconer \cite{Falconer90}). 
	\item If $E$ is the Cantor set $\{0, 1\}^{\mathbb{N}}$, the constructive Hausdorff dimension and the constructive packing-dimension are completely maxitive measures on $2^E$, see Lutz \cite{Lutz03, Lutz05}. 
	\item If $E$ is the set of positive integers, the zeta dimension is a maxitive measure on $2^E$, see Doty et al.\ \cite{Doty05}. 
\end{itemize}
\end{examples}

\begin{example}[Random closed sets]
Let $(\mathit{\Omega}, \mathrsfs{A}, P)$ be a probability space and $E$ be a locally-compact, separable, Hausdorff topological space. We denote by $\mathrsfs{F}$ the collection of  closed subsets of $E$, and by $\mathrsfs{K}$ the collection of compact subsets. 
A random closed set is a measurable map $C : \mathit{\Omega} \rightarrow \mathrsfs{F}$. 
For measurability a $\sigma$-algebra on $\mathrsfs{F}$ is needed. The usual $\sigma$-algebra considered is the Borel $\sigma$-algebra generated by the Vietoris (or \textit{hit-and-miss}) topology on $\mathrsfs{F}$. 
Choquet's fundamental theorem is that the distribution of a random closed set $C$ is characterized by its Choquet capacity $T : \mathrsfs{K} \rightarrow [0, 1]$ defined by $T(K) = P[C \cap K \neq \emptyset]$. Moreover, $T$ is an alternating set function that is also \textit{continuous from above} on $\mathrsfs{K}$, in the sense that $T(\bigcap_n K_n) = \lim_n T(K_n)$ for all $K_1 \supset K_2 \supset \ldots \in \mathrsfs{K}$, and every $[0,1]$-valued alternating, continuous from above set function on $\mathrsfs{K}$ is the Choquet capacity of some random closed set. 

Recall that every maxitive measure is alternating (see Proposition~\ref{prop:alt}). For a given upper-semicontinuous map $c : E \rightarrow [0, 1]$, the following construction explicitly gives a random closed set whose Choquet capacity has cardinal density $c$ \cite{Nguyen03}. Let $U$ be a uniformly distributed random variable on $[0, 1]$. Then $C = \{ x \in E : c(x) \geqslant U \}$ is a random closed set on $E$, and its Choquet capacity $T$ is maxitive and satisfies $T(K) = \bigoplus_{x\in K} c(x)$, for all $K \in \mathrsfs{K}$. 

One may observe that this random closed set is such that 
\[
C(\omega) \subset C(\omega') \mbox{ or } C(\omega') \subset C(\omega),
\]
for all $\omega, \omega' \in \mathit{\Omega}$. More generally, Miranda, Couso, and Gil \cite{Miranda04} called \textit{consonant} (of type C2) a random closed set $C$ satisfying  the above relation for all $\omega, \omega' \in \mathit{\Omega}_0$, for some event $\mathit{\Omega}_0$ of probability $1$. These authors showed that a random closed set is consonant if and only if its Choquet capacity is maxitive \cite[Corollary~5.4]{Miranda04}. 

Elements of random set theory may be found in the reference book by Matheron \cite{Matheron75}; see also the monographs by Goodman and Nguyen \cite{Goodman85} and Molchanov \cite{Molchanov05}. 
\end{example}

\begin{example}[Random sup-measures]\label{ex:random}
Let $(\mathit{\Omega}, \mathrsfs{A})$ and $(E, \mathrsfs{B})$ be measurable spaces, $P$ be a probability measure on $\mathrsfs{A}$, and $m$ be a finite $\sigma$-additive measure on $\mathrsfs{B}$. Consider a Poisson point process $(X_k, T_k)_{k \geqslant 1}$ on $\mathbb{R}_+ \times E$ with intensity $p x^{-p-1} dx \times m(dt)$, where $p > 0$. 
Then the random process defined on $\mathrsfs{B}$ by 
\[
M(B) = \bigoplus_{k \geqslant 1} X_k \cdot 1_B(T_k)
\] 
is, $\omega$ by $\omega$, a completely maxitive measure. Moreover, 
this is a \textit{$p$-Fr\'echet random sup-measure} with control measure $m$ in the sense of Stoev and Taqqu \cite[Definition~2.1]{Stoev05}, for it is a map $M : \mathit{\Omega} \times \mathrsfs{B} \rightarrow \overline{\mathbb{R}}_+$ satisfying the following axioms: 
\begin{itemize}
	\item for all $B \in \mathrsfs{B}$ the map $M(B) : \mathit{\Omega} \rightarrow \overline{\mathbb{R}}_+, \omega \mapsto M(\omega, B)$ is a random variable following a Fr\'echet distribution with shape parameter $1/p$, in such a way that, for all $x > 0$, 
	\[
	P[M(B) \leqslant x] = \exp(-m(B) x^{-p}); 
	\]
	\item for all pairwise disjoint collections $(B_j)_{j\in \mathbb{N}}$ of elements of $\mathrsfs{B}$, the random variables $M(B_j)$, $j \in \mathbb{N}$, are independent, and, almost surely, 
	\[
	M(\bigcup_{j \in \mathbb{N}} B_j) = \bigoplus_{j \in \mathbb{N}} M(B_j). 
	\]
\end{itemize}
The Poisson process $(X_k, T_k)_{k \geqslant 1}$ was introduced by de Haan \cite{deHaan84} as a tool for representing continuous-time max-stable processes. These processes play an important role in extreme value theory. 
See also Norberg \cite{Norberg86} and Resnick and Roy \cite{Resnick91} for elements on random sup-measures. 
\end{example}

\begin{example}[The home range]
Let $(X_n)_{n \geqslant 1}$ be a sequence of independent, identically distributed $\mathbb{R}^2$-valued random variables, and assume that the common distribution has compact support. We write this sequence in polar coordinates $(R_n, \mathit{\Theta}_n)_{n \geqslant 1}$. Define the map $h$ on Borel subsets $B$ of $[0, 2\pi]$ by: 
\[
h(B) = \sup \{ r \in \mathbb{R}_+ : P[R_1 > r, \mathit{\Theta}_1 \in B] > 0 \}. 
\]
Then, according to de Haan and Resnick \cite[Proposition~2.1]{deHaan94}, $h$ is a completely maxitive measure, and $h$ may be thought of as the boundary of the natural habitat of some animal, called the \textit{home range} in ecology. The sequence $(X_n)_{n \geqslant 1}$ is then seen as the successive sightings of the animal. De Haan and Resnick aimed at finding consistent estimates of the boundary $h$.  
\end{example}

The following paragraph contradicts an assertion made by van de Vel \cite[Exercise~II-3.19.1]{vanDeVel93}. 

\begin{example}[Carath\'eodory number of a convexity space]\label{ex:cexVDV}
A collection $\mathrsfs{C}$ of subsets of a set $X$ that contains $\emptyset$ and $X$ is a \textit{convexity} on $X$ if it is closed under arbitrary intersections and closed under directed unions. 
The pair $(X,\mathrsfs{C})$ is called a \textit{convexity space}, and elements of $\mathrsfs{C}$ are called \textit{convex} subsets of $X$. 
If $A \subset X$, the \textit{convex hull} $\operatorname{co}(A)$ of $A$ is the intersection of all convex subsets containing $A$. 
Advanced abstract convexity theory is developed in the monograph by van de Vel \cite{vanDeVel93}. 
The Carath\'eodory number $c(A)$ of some $A \subset X$ is the least integer $n$ such that, for each subset $B$ of $A$ and $x \in \operatorname{co}(B) \cap A$, there exists some finite subset $F$ of $B$ with cardinality $\leqslant n$ such that $x \in \operatorname{co}(F)$. 
In \cite[Exercise~II-3.19.1]{vanDeVel93}, van de Vel asserted that the map $A \mapsto c(A)$ is a maxitive (integer-valued) measure on $\mathrsfs{E}$, where $\mathrsfs{E}$ is the prepaving made up of finite unions of convex subsets of $X$. However, a simple counterexample is built as follows. Let $X$ be the three-element semilattice $\{ x_1, x_2, x_3\}$ with $x_2 = x_1 \wedge x_3$, endowed with the convexity made up of all subsets of $X$ but $\{x_1, x_3\}$. Let $A_i = \{x_i\}$ for $i = 1, 2, 3$. Then $c(A_i) = 1$ for $i = 1, 2, 3$, hence $\max_{i = 1, 2, 3} c(A_i) = 1$. However, $c(\bigcup_{i = 1, 2, 3} A_i) = c(X) = 2$, for if $B := \{x_1, x_3 \}$, one has $x_2 \in \operatorname{co}(B) \cap X = X$, while every nonempty subset $F$ of $B$ with cardinality $\leqslant 1$ is either $\{x_1\}$ or $\{x_3\}$, hence does not contain $x_2$. 
\end{example}

\begin{example}[Interpretation of maxitive measures]
Finkelstein et al.\ \cite{Finkelstein07} suggested to use maxitive measures as a model for a physicist's reasoning and beliefs about probable, possible, and impossible events. 
Kreino\-vich and Lonpr\'{e} \cite{Kreinovich05} advocated the use of maxitive measures for modelling rarity of events,  
for maxitive measures are limits of probability measures in a large deviation sense (for a justification see e.g.\ the work by O'Brien and Vervaat \cite{OBrien91}, Gerritse \cite{Gerritse96}, O'Brien \cite{OBrien96}, Akian \cite{Akian99}, Puhalskii \cite{Puhalskii94, Puhalskii01}). This interpretation is in accordance with Bouleau's criticism of extreme value theory \cite{Bouleau91}. This author noted that some events, although possible, are so rare (Bouleau gave the example of the extinction of Neanderthal Man) that they cannot be appropriately understood by classical probability theory (and in particular by extreme value theory). Since probability theory relies on the frequentist paradigm, the question of the \textit{probability} of such events would make no sense. 
For further discussion on the intuitive and the formalized distinction between \textit{probable} and \textit{possible} events, see also El Rayes and Morsi \cite[Paragraph~2]{ElRayes94} and Nguyen and Bouchon-Meunier \cite{Nguyen03}. 
\end{example}

\section{Maxitive measures as essential suprema}\label{sec:esssup}

\subsection{Introduction}\label{sec:introesssup}

In this section, we shall be interested in representing a maxitive measure $\nu$ defined on a $\sigma$-algebra $\mathrsfs{B}$ as an essential supremum with respect to some null-additive  monotone set function $\tau$, i.e.\ as  
\begin{equation}\label{eq:mu}
\nu(B) = \bigoplus_{x \in B}^{\tau} f(x), 
\end{equation}
for all $B \in \mathrsfs{B}$, as introduced in Example~\ref{ex:esssup}. 
Note that, for such a $\tau$, the set function $\delta_{\tau}$, defined by $\delta_{\tau}(B) = 1$ if $\tau(B) > 0$, $\delta_{\tau}(B) = 0$ otherwise, is a maxitive measure, and Equation~\eqref{eq:mu} is satisfied if and only if $\nu(B) = \bigoplus_{x \in B}^{\delta_{\tau}} f(x)$, for all $B \in \mathrsfs{B}$. Thus, we can restrict our attention to essential suprema with respect to maxitive measures, without loss of generality. 



\begin{definition}\label{abscont}
Let $\nu$ and $\tau$ be null-additive monotone set functions on a $\sigma$-algebra $\mathrsfs{B}$ on $E$. 
Then $\nu$ is \textit{absolutely continuous with respect to $\tau$} (or $\tau$  \textit{dominates} $\nu$), in symbols $\nu \ll \tau$, if for all $B \in \mathrsfs{B}$, $\tau(B) = 0$ implies $\nu(B) = 0$.
We shall say that $\nu$ is \textit{strongly absolutely continuous with respect to $\tau$} (or $\tau$  \textit{strongly dominates} $\nu$), in symbols $\nu \lll \tau$, if $\nu$ admits a $\mathrsfs{B}$-measurable relative density with respect to $\tau$, i.e.\ if there exists a $\mathrsfs{B}$-measurable map $f : E \rightarrow \overline{\mathbb{R}}_+$ such that Equation~\eqref{eq:mu} holds for all $B \in \mathrsfs{B}$. 
\end{definition}


Absolute continuity, although necessary in Equation~\eqref{eq:mu}, seems a priori too poor a condition for ensuring the existence of a (relative) density, i.e.\ $\nu \ll \tau$ does not imply $\nu \lll \tau$ in general. For instance, every maxitive measure $\nu$ satisfies $\nu \ll \delta_{\#}$, while $\nu$ does not necessarily have a cardinal density (see for instance Example~\ref{ex:monc} on measures of non-compactness). 
We shall understand in Section~\ref{secRN} that absolute continuity is actually a necessary and sufficient condition for the existence of a density whenever the dominating measure is \textit{$\sigma$-principal} (and the measure $\delta_{\#}$ is not $\sigma$-principal in general). 

The next proposition ensures that, under the absolute continuity condition, a relative density exists whenever a cardinal density already exists. 
Given a $\sigma$-algebra $\mathrsfs{B}$ on $E$, we say that a maxitive measure $\nu$ on $\mathrsfs{B}$ is \textit{strongly absolutely autocontinuous} (or \textit{autocontinuous} for short) if $\nu \lll \nu$. 

\begin{proposition}
Let $\nu$ be a maxitive measure on $\mathrsfs{B}$ with a $\mathrsfs{B}$-measurable cardinal density $c$. 
Then for every maxitive measure $\tau$ on $\mathrsfs{B}$, we have $\nu \ll \tau$ if and only if $\nu \lll \tau$. 
In particular, $\nu$ is autocontinuous. 
\end{proposition}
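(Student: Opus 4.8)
The plan is to show that the given cardinal density $c$ is itself a relative density of $\nu$ with respect to $\tau$, that is, that $\nu(B) = \bigoplus_{x \in B}^{\tau} c(x)$ for every $B \in \mathrsfs{B}$; since $c$ is $\mathrsfs{B}$-measurable by hypothesis, this immediately yields $\nu \lll \tau$. The reverse implication $\nu \lll \tau \Rightarrow \nu \ll \tau$ uses nothing about $c$: if $f$ is any relative density of $\nu$ with respect to $\tau$ and $\tau(B) = 0$, then for every $t > 0$ the set $B \cap \{f > t\}$ is $\tau$-negligible (being contained in $B$ itself), so $B \in \mathrsfs{I}_t$ for all $t>0$ and hence $\nu(B) = \inf\{t > 0 : B \in \mathrsfs{I}_t\} = 0$; thus $\nu \ll \tau$. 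So the real content is the direction $\nu \ll \tau \Rightarrow \nu \lll \tau$.

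First I would unwind the definition of the $\tau$-essential supremum of $c$: with $\mathrsfs{I}_t = \{G \in \mathrsfs{B} : G \cap \{c > t\} \text{ is } \tau\text{-negligible}\}$, I claim that for $t > 0$ one has $B \in \mathrsfs{I}_t$ if and only if $B \cap \{c > t\} = \emptyset$, equivalently $\nu(B) \leqslant t$. One implication is trivial, since $\emptyset$ is $\tau$-negligible. For the other, suppose $B \cap \{c > t\}$ is $\tau$-negligible, so it is contained in some $G \in \mathrsfs{B}$ with $\tau(G) = 0$. Absolute continuity gives $\nu(G) = 0$, and since $c$ is a cardinal density of $\nu$ we have $\nu(G) = \bigoplus_{x \in G} c(x) = \sup_{x \in G} c(x)$, forcing $c \equiv 0$ on $G$. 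But on $B \cap \{c > t\} \subset G$ one has $c > t > 0$, so this set must be empty; and $B \cap \{c > t\} = \emptyset$ is exactly the statement $\sup_{x \in B} c(x) \leqslant t$, i.e.\ $\nu(B) \leqslant t$.

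Then I would conclude by computing, for each $B \in \mathrsfs{B}$,
$\bigoplus_{x \in B}^{\tau} c(x) = \inf\{t > 0 : B \in \mathrsfs{I}_t\} = \inf\{t > 0 : \nu(B) \leqslant t\}$,
which equals $\nu(B)$ when $\nu(B) > 0$ and also equals $\nu(B) = 0$ when $\nu(B) = 0$, since the infimum is then taken over all of $(0,\infty)$. This gives $\nu(B) = \bigoplus_{x \in B}^{\tau} c(x)$ with $c$ measurable, hence $\nu \lll \tau$. Finally, applying the established equivalence with $\tau$ replaced by $\nu$, and noting that $\nu \ll \nu$ holds trivially, yields $\nu \lll \nu$, i.e.\ $\nu$ is autocontinuous.

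I do not expect a serious obstacle here. The crux is the simple but slightly counterintuitive observation that, under $\nu \ll \tau$, no nonempty set of the form $B \cap \{c > t\}$ with $t > 0$ can be $\tau$-negligible, because $c$ is bounded below by $t$ on it while absolute continuity forces $c$ to vanish on every $\tau$-null set; the only point requiring a little care is the degenerate case $\nu(B) = 0$ in the final infimum.
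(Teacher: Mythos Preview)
Your proof is correct and follows essentially the same approach as the paper: both show that the cardinal density $c$ itself serves as a relative density with respect to $\tau$, via the key observation that absolute continuity forces $c$ to vanish on every $\tau$-negligible set. Your presentation is slightly more streamlined---you characterize $\mathrsfs{I}_t$ as $\{B : \nu(B) \leqslant t\}$ and compute the infimum directly, whereas the paper establishes the two inequalities $\nu(B) \leqslant \bigoplus^\tau_{x\in B} c(x)$ and $\bigoplus^\tau_{x\in B} c(x) \leqslant \nu(B)$ separately---but the underlying argument is the same.
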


\begin{proof}
Suppose that $\nu \ll \tau$, and let us show that $\nu \lll \tau$. 
Let $B \in \mathrsfs{B}$, and let $x \in B$, $t \in \mathbb{R}_+$ such that $\tau(N) = 0$ with $N \supset B \cap \{ c > t \}$. 
If $c(x) > t$, then $x \in N$. Since $\nu \ll \tau$ and $\tau(N) = 0$, we have $\sup_{y \in N} c(y) = \nu(N) = 0$, so that $c(x) = 0$, a contradiction. Thus $c(x) \leqslant t$, and we get $\nu(B) = \bigoplus_{x\in B} c(x) \leqslant \bigoplus^\tau_{x\in B} c(x)$. 

Now we show the converse inequality. If $\nu(B)$ is infinite, this is evident. If not, let $a > \nu(B) = \bigoplus_{x\in B} c(x)$.  Then $B \cap \{ c > a \} = \emptyset$ is negligible with respect to $\tau$, hence $a \geqslant \bigoplus^\tau_{x\in B} c(x)$ by definition of essential supremum, and the result is proved. 
\end{proof}

\subsection{Existence of a relative density}

The following theorem on existence and ``uniqueness'' of relative densities is due to Barron et al.\  \cite[Theorem~3.5]{Barron00}. 
We add the following component: we define a maxitive measure $\tau$ on a $\sigma$-algebra $\mathrsfs{B}$ to be \textit{essential} if there exists a $\sigma$-finite, $\sigma$-additive measure $m$ such that $\tau(B) > 0$ if and only if $m(B) > 0$, for all $B \in \mathrsfs{B}$. 

\begin{theorem}[Barron--Cardaliaguet--Jensen]\label{radon-nikodym}
Let $\nu, \tau$ be $\sigma$-maxitive measures on $\mathrsfs{B}$. Assume that $\tau$ is essential. Then $\nu \ll \tau$ if and only if $\nu \lll \tau$. 
In this situation, the relative density of $\nu$ with respect to $\tau$ is unique $\tau$-almost everywhere. 
\end{theorem}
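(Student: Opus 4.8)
The plan is to exploit the bridge between the $\sigma$-maxitive measure $\tau$ and the classical $\sigma$-finite $\sigma$-additive measure $m$ witnessing essentiality, so that the problem is transferred to a setting where the classical Radon--Nikodym machinery and the classical notion of essential supremum (with respect to $m$) apply. The first observation is that, since $\tau$ and $m$ have exactly the same null sets, $\tau$-negligibility and $m$-negligibility coincide, hence $\tau$-a.e.\ and $m$-a.e.\ mean the same thing, and the operator $\bigoplus^{\tau}_{x\in B}$ appearing in Equation~\eqref{eq:mu} is literally the classical $m$-essential supremum over $B$. Consequently $\nu \lll \tau$ just says: there is a $\mathrsfs{B}$-measurable $f\colon E\to\overline{\mathbb{R}}_+$ with $\nu(B)=\operatorname*{ess\,sup}_{x\in B} f(x)$ ($m$-essential supremum) for all $B\in\mathrsfs{B}$. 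The implication $\nu\lll\tau\Rightarrow\nu\ll\tau$ is immediate: if $\tau(B)=0$ then $m(B)=0$, so $B\cap\{f>t\}$ is $m$-null for every $t>0$, whence $\nu(B)=0$.

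For the substantive direction, assume $\nu\ll\tau$. First I would reduce to the case $m$ finite: write $E=\bigsqcup_n E_n$ with $0<m(E_n)<\infty$ (discarding the $m$-null part and splitting a $\sigma$-finite cover into disjoint pieces), build a relative density on each $E_n$ and paste. On a fixed piece of finite $m$-measure, the key device is the set function $t\mapsto \nu(B)$ restricted to $\{f>t\}$-type slices; concretely, for each rational $t>0$ consider the $\sigma$-ideal $\mathrsfs{I}_t=\{B\in\mathrsfs{B}: \nu(B)\leqslant t\}$. Using $\sigma$-maxitivity of $\nu$ (continuity from below) together with the finiteness of $m$, one shows $\mathrsfs{I}_t$ is closed under countable unions and hence, by continuity from below of $m$, contains a set $A_t$ with $m(A_t)=\sup\{m(B):B\in\mathrsfs{I}_t\}$; this $A_t$ is an essentially largest member of $\mathrsfs{I}_t$, i.e.\ $\nu(B\setminus A_t)>t$ for every $B$ not contained in a null enlargement of $A_t$. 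Monotonicity in $t$ can be arranged (replace $A_t$ by $\bigcup_{s\leqslant t}A_s$ over rationals $s$), and then $f(x):=\inf\{t\in\mathbb{Q}_{>0}: x\in A_t\}$ (with $\inf\emptyset=\infty$) is the candidate relative density. The verification that $\bigoplus^{\tau}_{x\in B} f(x)=\nu(B)$ splits into the inequality $\leqslant$ (if $a>\nu(B)$ then $B\subset A_a$ modulo a null set, so $B\cap\{f>a\}$ is null, giving $\bigoplus^{\tau}_{x\in B}f\leqslant a$) and the inequality $\geqslant$ (if $B\cap\{f>t\}$ were $m$-null while $\nu(B)>t$, pick a rational $s$ with $\nu(B)>s>t$; then $B\setminus A_s$ has positive $\nu$-value hence positive $m$-measure, yet on $B\setminus A_s$ one has $f\geqslant s>t$, contradicting nullity of $B\cap\{f>t\}$); here absolute continuity $\nu\ll\tau$ is exactly what converts ``$\nu>0$'' into ``not $m$-null'' at the crucial spot.

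For the uniqueness clause, suppose $f$ and $g$ are both $\mathrsfs{B}$-measurable relative densities. If $m(\{f>g\})>0$, then $m$ being $\sigma$-finite we may pick $t$ rational and a set $B\subset\{f>t\geqslant g\}$ with $0<m(B)<\infty$; then $\bigoplus^{\tau}_{x\in B}g\leqslant t$ while $\bigoplus^{\tau}_{x\in B}f>t$ (because $B\cap\{f>t\}=B$ is not $m$-null), contradicting $\nu(B)=\bigoplus^{\tau}_{x\in B}f=\bigoplus^{\tau}_{x\in B}g$. By symmetry $m(\{f\neq g\})=0$, i.e.\ $f=g$ $\tau$-a.e.

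I expect the main obstacle to be the construction of the monotone family $(A_t)$ and the proof that $A_t$ is genuinely a ``largest element modulo negligible sets'' of $\mathrsfs{I}_t$: this is where both the $\sigma$-maxitivity of $\nu$ and the $\sigma$-finiteness/continuity-from-below of $m$ must be used in tandem, and it is essentially a disguised instance of the fact that an essential supremum of a family of $\mathrsfs{B}$-sets exists in a $\sigma$-finite measure space. The remaining verifications are the kind of three-$\varepsilon$ (here three-rational) arguments already illustrated in the proof of the previous proposition, and carry over with minor changes.
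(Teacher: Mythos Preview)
Your argument is correct, but it follows a genuinely different route from the proof the paper presents. The paper reproduces Barron--Cardaliaguet--Jensen's original construction: to the pair $(\nu,m)$ it associates the auxiliary $\sigma$-\emph{additive} measure
\[
m_\nu(B)=\inf\Bigl\{\sum_{j\geqslant 1}\nu(B_j)\,m(B_j): \bigcup_j B_j=B,\ B_j\in\mathrsfs{B}\Bigr\},
\]
applies the classical Radon--Nikodym theorem to obtain $m_\nu(B)=\int_B c\,dm$, and then recovers $\nu$ from $m_\nu$ via the reconstruction formula $\nu(B)=\sup\{m_\nu(B')/m(B'):B'\subset B,\ m(B')>0\}$, checking that this forces $c$ to be a relative density. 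The reduction to finite $\nu$ is done through the $\arctan$ trick.

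Your approach bypasses the additive auxiliary measure altogether: you are in effect proving directly that a finite (hence $\sigma$-finite) $\sigma$-additive measure $m$ is $\sigma$-principal---each $\sigma$-ideal $\mathrsfs{I}_t=\{B:\nu(B)\leqslant t\}$ admits an essentially largest element $A_t$---and then building the density as $f(x)=\inf\{t:x\in A_t\}$. This is precisely the Sugeno--Murofushi mechanism underlying Theorem~\ref{sugeno-murofushi}, specialized to the case $\tau=\delta_m$; the paper itself observes later (Corollary~\ref{coro:sm0} and the remark preceding it) that Theorem~\ref{radon-nikodym} can be recovered this way. What you gain is a fully constructive proof that does not invoke the classical Radon--Nikodym theorem as a black box, and no separate reduction to finite $\nu$ is needed. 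What the paper's route buys is an explicit correspondence $\nu\mapsto m_\nu$ between $\sigma$-maxitive and $\sigma$-additive measures, which is of independent interest even though, as the paper concedes, it leaves ``the construction of the density hidden''.
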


\begin{proof}[Sketch of the proof]
Since $\tau$ is essential we can replace, without loss of generality, $\tau$ by some $\sigma$-finite, $\sigma$-additive measure $m$ in the statement of Theorem~\ref{radon-nikodym}. 
We first assume that both $m$ and $\nu$ are finite. 
The ingenious proof given by Barron et al.\ relies on the following idea: to $\nu$ they associate the map $m_{\nu}$ defined on $\mathrsfs{B}$ by
\[
m_{\nu}(B) = \inf\left\{ \sum_{j\geqslant 1} \nu(B_j) m(B_j) : \bigcup_{j\geqslant 1} B_j = B, B_k \in \mathrsfs{B}, \forall k \geqslant 1 \right\}. 
\]
This formula is certainly inspired by the Carath\'eodory extension procedure in classical measure theory, see e.g.\ \cite[Definition~10.21]{Aliprantis06}. 
As intuition suggests, $m_{\nu}$ turns out to be a $\sigma$-additive measure, absolutely continuous with respect to $m$. Thanks to the classical Radon--Nikodym theorem there is some $\mathrsfs{B}$-measurable map $c : E \rightarrow \mathbb{R}_+$ such that 
\[
m_{\nu}(B) = \int_B c \, dm, 
\]
for all $B \in\mathrsfs{B}$, and one can prove that  
$
\nu(B) = \bigoplus_{x\in B}^{m} c(x)
$ 
for all $B \in \mathrsfs{B}$ using the following ``reconstruction'' formula for $\nu$:
\[
\nu(B) = \sup \left\{ \frac{m_{\nu}(B')}{m(B')} : B' \subset B, B'\in \mathrsfs{B}, m(B') > 0 \right\}, 
\]
for all $B\in\mathrsfs{B}$. 

Now take some (not necessarily finite) $\nu$, 
and let $\nu_1 : B \mapsto \arctan\nu(B)$. Then $\nu_1$ is a finite $\sigma$-maxitive measure, absolutely continuous with respect to $m$, hence one can write $\nu_1(B) = \bigoplus_{x\in B}^{m} c_1(x)$. 
Since $\nu_1(E) \leqslant \pi/2$, we can choose $c_1$ to be ($\mathrsfs{B}$-measurable and) such that $0 \leqslant c_1 \leqslant \pi/2$. 
It is now an easy task to show that, for all $B \in \mathrsfs{B}$, $\nu(B) = \bigoplus_{x\in B}^{m} c(x)$, where $c(x) = \tan(c_1(x))$. 

The case where $m$ is $\sigma$-finite is easily deduced. 
\end{proof}

\begin{corollary}\label{coro:esscard}
Let $\nu$ be an essential $\sigma$-maxitive measure on $\mathrsfs{B}$. Then $\nu$ is autocontinuous. 
Moreover, 
if the empty set is the only $\nu$-negligible subset, 
then $\nu$ has a cardinal density. 
\end{corollary}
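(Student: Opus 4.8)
The plan is to deduce both assertions of Corollary~\ref{coro:esscard} from Theorem~\ref{radon-nikodym} applied in the degenerate case $\tau = \nu$. First I would observe that autocontinuity, i.e.\ $\nu \lll \nu$, is immediate: since $\nu$ is essential and obviously $\nu \ll \nu$ (if $\nu(B) = 0$ then $\nu(B) = 0$), Theorem~\ref{radon-nikodym} with $\tau = \nu$ gives $\nu \lll \nu$. Concretely, this yields a $\mathrsfs{B}$-measurable map $c : E \to \overline{\mathbb{R}}_+$ with
\[
\nu(B) = \bigoplus_{x\in B}^{\nu} c(x)
\]
for all $B \in \mathrsfs{B}$; that is $c$ is a relative density of $\nu$ with respect to itself. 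This is exactly the definition of $\nu$ being autocontinuous, so the first sentence is done.

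For the second assertion I would fix the hypothesis that the empty set is the only $\nu$-negligible subset and show that the relative density $c$ furnished above is in fact a \emph{cardinal} density, i.e.\ that $\nu(B) = \bigoplus_{x\in B} c(x)$ (an exact supremum) for all $B \in \mathrsfs{B}$. The key point is that under this hypothesis the $\nu$-essential supremum collapses to the ordinary supremum: by definition $\bigoplus_{x\in B}^{\nu} c(x) = \inf\{ t > 0 : B \cap \{c > t\} \text{ is } \nu\text{-negligible}\}$, and since the only $\nu$-negligible set is $\emptyset$, the condition ``$B \cap \{c > t\}$ is $\nu$-negligible'' is equivalent to ``$B \cap \{c>t\} = \emptyset$'', i.e.\ to ``$c(x) \leqslant t$ for all $x \in B$''. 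Hence $\bigoplus_{x\in B}^{\nu} c(x) = \inf\{ t > 0 : c(x) \leqslant t \ \forall x \in B\} = \sup_{x\in B} c(x) = \bigoplus_{x\in B} c(x)$. Combining this with the identity $\nu(B) = \bigoplus_{x\in B}^{\nu} c(x)$ from the first part gives $\nu(B) = \bigoplus_{x\in B} c(x)$, so $c$ is a cardinal density of $\nu$.

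I do not anticipate a serious obstacle here, as the argument is essentially a matter of unwinding definitions once Theorem~\ref{radon-nikodym} is invoked; the only mild subtlety is the boundary behaviour of the infimum over $t > 0$ (handling the case $\sup_{x\in B} c(x) = 0$, where one should check directly that $\nu(B) = 0 = \bigoplus_{x\in B} c(x)$, e.g.\ because $B$ is then $\nu$-negligible and hence empty, or $c \equiv 0$ on $B$), but this is routine. One should also note in passing that the conclusion is consistent with Example~\ref{ex:denscard}: a maxitive measure with a cardinal density is automatically completely maxitive, which is why the hypothesis ``the empty set is the only $\nu$-negligible subset'' is genuinely restrictive.
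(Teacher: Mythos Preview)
Your proposal is correct and matches the paper's intended approach: the corollary is stated immediately after Theorem~\ref{radon-nikodym} with no separate proof, so it is meant to follow exactly by taking $\tau = \nu$ in that theorem and then observing (as you do) that when $\emptyset$ is the sole $\nu$-negligible set the $\nu$-essential supremum reduces to the pointwise supremum. Your handling of the boundary case $\sup_{x\in B} c(x) = 0$ is slightly overcautious---the identity $\inf\{t>0 : c(x)\leqslant t\ \forall x\in B\} = \sup_{x\in B} c(x)$ already covers it---but there is no gap.
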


Barron et al.'s theorem is interesting 
because of its proof, which points out a correspondence between $\sigma$-maxitive and $\sigma$-additive measures. However, a part of the mystery persists, for it relies on the classical Radon--Nikodym theorem: the construction of the density remains hidden. 

Note that Acerbi, Buttazzo, and Prinari \cite[Theorem~3.2]{Acerbi02} used Theorem~\ref{radon-nikodym} for resolving some non-linear minimization problems. They  considered a $\sigma$-finite, $\sigma$-additive measure $m$ on $(E, \mathrsfs{B})$, and derived sufficient conditions for a functional $F : L^{\infty}(m; E, \mathbb{R}^n) \times \mathrsfs{B} \rightarrow \mathbb{R} \cup \{ \pm\infty \}$ to be of the form  
\[
F(u, B) = \bigoplus_{x \in B}^{m} f(x, u(x)), 
\]
for some 
measurable map $f : E \times \mathbb{R}^n \rightarrow \mathbb{R} \cup \{\infty\}$ such that $f(x,\cdot)$ is lower-semicontinuous on $\mathbb{R}^n$, $m$-almost everywhere. This study was carried on by Cardaliaguet and Prinari \cite{Cardaliaguet05}, with the search for representations of the form 
\[
F(u, B) = \bigoplus_{x \in B}^{m} f(x, u(x), Du(x)), 
\]
where $u$ runs over the set of Lipschitz-continuous maps on $E$. 

Theorem~\ref{radon-nikodym} was rediscovered by Drewnowski \cite[Theorem~1]{Drewnowski09}, with a notably different proof. He applied this result to the representation of K\"{o}the function $M$-spaces as $L^{\infty}$-spaces. 
Actually, we shall see in Section~\ref{secRN} that Theorem~\ref{radon-nikodym} is a direct consequence of a more general result, proved years earlier by Sugeno and Murofushi \cite{Sugeno87}, which expresses it as a Radon--Nikodym like theorem with respect to the Shilkret integral (see Theorem~\ref{sugeno-murofushi}). 


\subsection{Maxitive measures of bounded variation}\label{par:essential}

Considering Theorem~\ref{radon-nikodym}, a natural interest is to derive sufficient conditions for a maxitive measure to be essential. A null-additive set function on $\mathrsfs{B}$ satisfies the \textit{countable chain condition} (or is \textit{CCC}) if each family of non-negligible pairwise disjoint elements of $\mathrsfs{B}$ is countable. 
(A CCC set function is sometimes called \textit{$\sigma$-decomposable}, but this terminology should be avoided, because of possible confusion with the notion of decomposability used e.g.\ by Weber \cite{Weber84}.) 
It is not difficult to show that every essential maxitive measure is CCC\@. The converse statement was the object of Mesiar's hypothesis, proposed in \cite{Mesiar97}. 
Murofushi \cite{Murofushi02} showed that this hypothesis as such is wrong, by providing a counterexample; see also Poncet \cite{Poncet07}. 
We now give the following sufficient condition for a maxitive measure to be essential. A null-additive set function $\tau$ on $\mathrsfs{B}$ is \textit{of bounded variation} if $| \tau | := \sup_{\pi} \sum_{B \in \pi} \tau(B) < \infty$, where the supremum is taken over the set of finite $\mathrsfs{B}$-partitions $\pi$ of $E$. 



\begin{proposition}\label{prop:essential} 
Every $\sigma$-maxitive measure of bounded variation on $\mathrsfs{B}$ is finite and essential. 
\end{proposition}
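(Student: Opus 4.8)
The plan is to show that a $\sigma$-maxitive measure $\tau$ of bounded variation is finite, and then to construct from $|\tau|$ a $\sigma$-finite (in fact finite) $\sigma$-additive measure $m$ witnessing essentiality. Finiteness is immediate: for any $B \in \mathrsfs{B}$, the pair $\{B, E \setminus B\}$ is a finite $\mathrsfs{B}$-partition of $E$, so $\tau(B) \leqslant \tau(B) + \tau(E\setminus B) \leqslant |\tau| < \infty$. Hence $\tau$ is finite, and in particular the notion of $\tau$-negligibility behaves well.

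For essentiality, the natural candidate is the \emph{variation measure}
\[
m(B) = \sup_{\pi} \sum_{A \in \pi} \tau(A),
\]
where the supremum runs over finite $\mathrsfs{B}$-partitions $\pi$ of $B$. First I would check that $m$ is a genuine (finite, since $m(E) = |\tau| < \infty$) $\sigma$-additive measure on $\mathrsfs{B}$: finite additivity of $m$ is the standard argument (refining partitions of $B_1 \sqcup B_2$), and countable additivity then follows from continuity, which is where $\sigma$-maxitivity of $\tau$ enters — the tail $\sum_{j > n} \tau(B_j)$ must be controlled, and since $\tau$ is continuous from below and of bounded variation one can show $m$ is continuous from below, hence $\sigma$-additive. (Alternatively, one invokes the fact that the variation of a maxitive measure of bounded variation is $\sigma$-additive; this is classical, e.g.\ via the alternating property of Proposition~\ref{prop:alt}, a maxitive measure being $\infty$-alternating and of bounded variation, so its variation is a measure.)

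Finally I would verify the equivalence $\tau(B) > 0 \iff m(B) > 0$ for all $B \in \mathrsfs{B}$. One direction is trivial: $\tau(B) \leqslant m(B)$ by taking the trivial partition $\pi = \{B\}$, so $\tau(B) > 0$ implies $m(B) > 0$. For the converse, suppose $m(B) > 0$; then there is a finite partition $\{A_1, \dots, A_k\}$ of $B$ with $\sum_i \tau(A_i) > 0$, so some $\tau(A_i) > 0$, and since $A_i \subset B$ and $\tau$ is monotone, $\tau(B) \geqslant \tau(A_i) > 0$. This establishes that $\tau$ is essential with witness $m$.

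\textbf{Main obstacle.} The only non-routine point is the $\sigma$-additivity of $m$: finite additivity is bookkeeping, but upgrading to countable additivity genuinely uses that $\tau$ is $\sigma$-maxitive (not merely maxitive) together with bounded variation, and one must be careful that the supremum defining $m$ interacts correctly with countable unions. Invoking the classical theory of $\infty$-alternating set functions of bounded variation (whose variation is automatically a measure) sidesteps this, but if one wants a self-contained argument, this continuity-from-below verification for $m$ is the crux.
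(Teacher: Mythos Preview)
Your proposal is correct and follows essentially the same route as the paper: both construct the disjoint variation $m$ of $\tau$ (the paper writes it as $m(B)=\sup_\pi \sum_{B'\in\pi}\tau(B\cap B')$ over partitions of $E$, which is the same object), observe that $m$ is a finite $\sigma$-additive measure with $\tau\leqslant m$, and check that $\tau(B)>0\iff m(B)>0$. The only difference is that the paper dispatches the $\sigma$-additivity of $m$ by citing Pap \cite[Theorem~3.2]{Pap95} rather than arguing it directly, and derives finiteness of $\tau$ from $\tau\leqslant m$ rather than from the two-set partition you use.
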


\begin{proof}
Let $\nu$ be a $\sigma$-maxitive measure of bounded variation on $\mathrsfs{B}$ and $m$ be the map defined on $\mathrsfs{B}$ by
\[
m(B) = \sup_{\pi} \sum_{B' \in \pi} \nu(B\cap B'),
\]
where the supremum is taken over the set of finite $\mathrsfs{B}$-partitions $\pi$ of $E$. Then $m$, called the \textit{disjoint variation} of $\nu$, is the least $\sigma$-additive measure greater than $\nu$ (see e.g.\ Pap \cite[Theorem~3.2]{Pap95}). Since $\nu$ is of bounded variation, $m$ is finite, so $\nu$ is finite. Moreover, $\nu(B) > 0$ if and only if $m(B) > 0$, so $\nu$ is essential. 
\end{proof}

\section{The idempotent integral}\label{secshi}

\subsection{Introduction}

Until today, the Lebesgue integral has given rise to many extensions. The first of them dates back to Vitali \cite{Vitali25b}, who proposed to replace $\sigma$-additive measures by some more general set functions (see the historical note by Marinacci \cite{Marinacci97}). 
In \cite{Choquet54} Choquet built on the same idea to create the tool now called the Choquet integral; it was revived by Schmeidler \cite{Schmeidler86, Schmeidler89}; its theoretical properties were developed e.g.\ by Greco \cite{Greco82}, Groes et al.\ \cite{Groes98}, K\"{o}nig \cite{Koenig03}; it has found numerous applications, as in statistics and data mining (see Murofushi and Sugeno \cite{Murofushi93b}, Grabisch \cite{Grabisch03}, Wang, Leung, and Klir \cite{Wang05}, Fallah Tehrani et al.\ \cite{FallahTehrani12}), game theory and mathematical economics (see Gilboa and Schmeidler \cite{Gilboa94}, Heilpern \cite{Heilpern02}), decision theory (see Chateauneuf \cite{Chateauneuf94}, Grabisch \cite{Grabisch95, Grabisch97}, Grabisch and Roubens \cite{Grabisch00}, Grabisch and Labreuche \cite{Grabisch10}, Mayag, Grabisch, and Labreuche \cite{Mayag11}), insurance and finance (see Chateauneuf, Kast, and Lapied \cite{Chateauneuf96}, Castagnoli, Maccheroni, and Marinacci \cite{Castagnoli04}). 

After Choquet, many authors have examined the properties of integrals where the operations $(+, \times)$ used for both the Lebesgue and the Choquet integrals are swapped for some more general pair $(\dot{+}, \dot{\times})$ of associative binary relations on $\mathbb{R}_+$ or $\overline{\mathbb{R}}_+$. 
In the case where $(\dot{+}, \dot{\times})$ is the pair $(\max, \times)$ (resp.\ $(\max, \min)$), one gets the \textit{Shilkret integral} (resp.\ \textit{Sugeno integral} or \textit{fuzzy integral}) discovered by Shilkret \cite{Shilkret71} (resp.\ by Sugeno \cite{Sugeno74}). 
For general $(\dot{+}, \dot{\times})$ various generalizations of the Lebesgue, Choquet, Shilkret, and Sugeno integrals have been introduced, including the \textit{Weber integral} \cite{Weber84, Weber86}, the \textit{pseudo-additive integral} \cite{Sugeno87}, the \textit{fuzzy t-conorm integral} \cite{Murofushi91b}, the \textit{pan integral} \cite{Yang85a}; 
see also Wang and Klir \cite{Wang92, Wang09}, Pap \cite{Pap95, Pap02b}. 
For a further generalization of all these integrals, see Sander and Siedekum \cite{Sander05c}. 

Beyond the replacement of arithmetical operations, another direction of generalization is to integrate $L$-valued functions (giving rise to $L$-valued integrals) rather than real-valued functions, where $L$ has an appropriate semiring or semimodule structure. In this process, measures can either remain real-valued if $L$ is a (semi)module (as in the Bochner integral which is a well-known extension of the Lebesgue integral, where $L$ is a Banach space), or can also be $L$-valued if $L$ is a semiring. 
Maslov \cite{Maslov87} developed an integration theory for measures with values in an ordered semiring. Other authors considered the case where $L$ is a complete lattice, see e.g.\ Greco \cite{Greco87}, Liu and Zhang \cite{Liu94}, de Cooman, Zhang, and Kerre \cite{deCooman01}, Kramosil \cite{Kramosil05a}. 
In the line of Maslov, Akian \cite{Akian99} focused on defining an integral for dioid-valued functions, and showed how crucial the assumption of \textit{continuity} of the underlying partially ordered set can be (see the monograph by Gierz et al.\ \cite{Gierz03} for background on continuous lattices and domain theory; see also \cite{Poncet12b}). 
Jonasson \cite{Jonasson98} had a similar approach, but managed to mix the powerful tool of continuous poset theory with a general ordered-semiring structure for $L$. See also Heckmann and Huth \cite{Heckmann98} for the role of continuous posets in integration theory. For extensions of the Riemann integral driven by the idea of approximation and still using arguments from continuous poset theory, see Edalat \cite{Edalat95b}, Howroyd \cite{Howroyd00}, Lawson and Lu \cite{Lawson03}, and references therein. 

A review of integration theory in mathematics should include a number of prolific developments (e.g.\ the Birkhoff integral, the Pettis integral, the stochastic It\^{o} integral, or the axiomatic approach of \textit{universal integrals} proposed by Klement, Mesiar, and Pap \cite{Klement10}, to cite only a few among many others). Needless to say this is far beyond the scope of this work; the reader may refer to the book \cite{Pap02} for a broad overview of measure and integration theory. 
In this paper, we shall limit our attention to the case where $\dot{+}$ is the maximum operation $\max = \oplus$ and $\dot{\times}$ is a pseudo-multiplication (i.e.\ a binary relation $\odot$ satisfying the properties given in Paragraph~\ref{par:odot}). This section is devoted to the construction of the related integral, that we call the \textit{idempotent $\odot$-integral}.  



\subsection{Pseudo-multiplications and their properties}\label{par:odot}

In the remaining part of this paper, we consider a binary relation $\odot$ defined on $\overline{\mathbb{R}}_+ \times \overline{\mathbb{R}}_+$ with the following properties: 
\begin{itemize}
	\item associativity; 
	\item continuity on $(0, \infty) \times [0, \infty]$; 
	\item continuity of the map $s \mapsto s \odot t$ on $(0, \infty]$, for all $t$; 
	\item monotonicity in both components; 
	\item existence of a left identity element $1_{\odot}$, i.e.\ $1_{\odot} \odot t =  t$ for all $t$; 
	\item absence of zero divisors, i.e.\ $s \odot t = 0 \Rightarrow 0 \in  \{s, t\}$, for all $s, t$; 
	\item $0$ is an annihilator, i.e.\ $0 \odot t = t \odot 0 = 0$, for all $t$. 
\end{itemize}
We call such a $\odot$ a \textit{pseudo-multiplication}. 
Pseudo-multiplications and more generally pseudo-arithmetic operations have been studied e.g.\ by Benvenuti and Mesiar \cite{Benvenuti04}. 
Note that the axioms above are stronger than in \cite{Sugeno87}, where associativity was not assumed. For more on pseudo-multiplications see also \cite{Poncet13e}. 

We consider the map $O : \overline{\mathbb{R}}_+ \rightarrow \overline{\mathbb{R}}_+$ defined by $O(t) = \inf_{s > 0} s \odot t$. 
An element $t$ of $\overline{\mathbb{R}}_+$ is \textit{$\odot$-finite} if $O(t) = 0$ (and $t$ is \textit{$\odot$-infinite} otherwise). 
We conventionally write $t \ll_{\odot} \infty$ for a $\odot$-finite element $t$. 
If $O(1_{\odot}) = 0$, we say that the pseudo-multiplication $\odot$ is \textit{non-degenerate}. This amounts to say that the set of $\odot$-finite elements differs from $\{0\}$. 



\subsection{Definition and elementary properties}\label{intshilkret}

Throughout this section, $\mathrsfs{B}$ is a $\sigma$-algebra on $E$. 
A map $f : E \rightarrow \overline{\mathbb{R}}_+$ is \textit{$\mathrsfs{B}$-measurable} 
if $\{ f > t \} \in \mathrsfs{B}$, for all $t \in \mathbb{R}_+$. 

\begin{definition}
Let $\nu$ be a maxitive measure on $\mathrsfs{B}$, and let $f : E \rightarrow \overline{\mathbb{R}}_+$ be a $\mathrsfs{B}$-measurable map. The \textit{idempotent $\odot$-integral}  
of $f$ with respect to $\nu$ is defined by 
\begin{equation}\label{defint}
\nu(f) = \dint{E} f \odot d\nu = \bigoplus_{t \in \mathbb{R}_+} t \odot \nu(f > t). 
\end{equation}
\end{definition}

The occurrence of $\infty$ in the notation $\ddint \,$ is \textit{not} an integration bound, see \cite[Theorem~I-5.7]{Poncet11} for a justification.

Generalizing Gerritse's result \cite[Proposition~3]{Gerritse96}, the following identity holds:
\[
\dint{E} f \odot d\nu = \bigoplus_{B \in \mathrsfs{B}} \left(f^{\wedge}(B) \odot \nu(B)\right),
\]
where $f^{\wedge}(A)$ stands for $\inf_{x\in A} f(x)$. Also, notice that the supremum in Equation~\eqref{defint} may be reduced to a countable supremum, for 
\begin{align*}
\dint{E} f \odot d\nu &= \bigoplus_{t \in \mathbb{R}_+} t \odot \nu\left(\bigcup_{r\in \mathbb{Q}_+, r \geqslant t}\{ f > r\}\right) 
= \bigoplus_{t \in \mathbb{R}_+} t \odot \bigoplus_{t\in \mathbb{Q}_+, r \geqslant t}\nu( f > r ) \\
&= \bigoplus_{r \in \mathbb{Q}_+} \bigoplus_{t\in \mathbb{R}_+, t \leqslant r} t \odot \nu( f > r) 
= \bigoplus_{r \in \mathbb{Q}_+} r \odot \nu( f > r), 
\end{align*}
so that Equation~\eqref{defint} is now given in a countable form. 

\begin{proposition}
Let $\nu$ be a $\sigma$-maxitive measure on $\mathrsfs{B}$. 
Then, for all $\mathrsfs{B}$-measurable maps $f : E \rightarrow \overline{\mathbb{R}}_+$, and all $r \in \mathbb{R}_+$, $B \in \mathrsfs{B}$, the following properties hold: 
\begin{itemize}
	\item $\nu(1_B) = \nu(B)$, 
	\item homogeneity: $\nu(r \odot f) = r \odot \nu(f)$, 
	\item $\sigma$-maxitivity: $\nu(\bigoplus_n f_n) = \bigoplus_n \nu(f_n)$, for every sequence of $\mathrsfs{B}$-measurable maps $f_n : E \rightarrow \overline{\mathbb{R}}_+$, 
	\item $B \mapsto \dint{B} f \odot d\nu$ is a $\sigma$-maxitive measure on $\mathrsfs{B}$.  
\end{itemize}
\end{proposition}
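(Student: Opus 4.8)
The plan is to verify each of the four assertions in turn, reducing everything to the definition $\nu(f) = \bigoplus_{t \in \mathbb{R}_+} t \odot \nu(f > t)$ together with the axioms on the pseudo-multiplication $\odot$ and the ($\sigma$-)maxitivity of $\nu$.

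\medskip

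\textbf{The identity $\nu(1_B) = \nu(B)$.} For $f = 1_B$ one has $\{f > t\} = B$ when $0 \leqslant t < 1$ and $\{f > t\} = \emptyset$ when $t \geqslant 1$. Hence $\nu(1_B) = \bigoplus_{0 \leqslant t < 1} t \odot \nu(B)$. Using monotonicity of $\odot$ in its first argument and the fact that $1_\odot$ is a left identity (so $1_\odot \odot \nu(B) = \nu(B)$, and $1_\odot < 1$ after the usual normalization, or more carefully: $t \odot \nu(B) \leqslant 1_\odot \odot \nu(B) = \nu(B)$ for $t \leqslant 1_\odot$, while $\sup_{t<1} t \odot \nu(B) = \nu(B)$ by left-continuity of $s \mapsto s \odot \nu(B)$ and the normalization $1_\odot \leqslant 1$), the supremum equals $\nu(B)$. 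I would present this via the already-stated alternative formula $\nu(f) = \bigoplus_{B'\in\mathrsfs{B}}(f^\wedge(B') \odot \nu(B'))$, which for $f=1_B$ makes the computation transparent since $f^\wedge(B') = 1$ if $B' \subset B$ and $0$ otherwise.

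\medskip

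\textbf{Homogeneity.} Here $\{r \odot f > t\}$ needs to be related to super-level sets of $f$. The clean route is again the formula $\nu(r\odot f) = \bigoplus_{B'}(\,(r\odot f)^\wedge(B') \odot \nu(B')\,)$; since $(r \odot f)^\wedge(B') = r \odot f^\wedge(B')$ by monotonicity and continuity of $s \mapsto r \odot s$ (for $r > 0$; the case $r = 0$ is immediate from $0$ being an annihilator), associativity of $\odot$ gives $\bigoplus_{B'} r \odot (f^\wedge(B') \odot \nu(B')) = r \odot \bigoplus_{B'}(f^\wedge(B') \odot \nu(B')) = r \odot \nu(f)$, where pulling $r\odot(-)$ through the supremum uses monotonicity and continuity of $s \mapsto r \odot s$ on $(0,\infty]$.

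\medskip

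\textbf{$\sigma$-maxitivity of $f \mapsto \nu(f)$ and the final bullet.} For a sequence $f_n$, put $f = \bigoplus_n f_n$; then $\{f > t\} = \bigcup_n \{f_n > t\}$, so by $\sigma$-maxitivity of $\nu$ one has $\nu(f > t) = \bigoplus_n \nu(f_n > t)$. Substituting into the definition and using that $\odot$ distributes over countable suprema in its second argument (continuity plus monotonicity) and that suprema commute, one gets $\nu(f) = \bigoplus_t \bigoplus_n t \odot \nu(f_n > t) = \bigoplus_n \nu(f_n)$. Finally, for $B \mapsto \dint{B} f\odot d\nu$: that this vanishes at $\emptyset$ is clear, maxitivity $\nu(f; B_1\cup B_2) = \nu(f; B_1)\oplus\nu(f; B_2)$ follows because $\{f1_{B_1\cup B_2} > t\} = \{f1_{B_1}>t\}\cup\{f1_{B_2}>t\}$ for $t>0$ and then from maxitivity of $\nu$, and continuity from below follows from the $\sigma$-maxitivity just proved applied to $f_n = f 1_{B_n}$ for an increasing sequence $B_n$.

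\medskip

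\textbf{Main obstacle.} The only delicate point is justifying the interchange of $\odot$ with suprema — both pulling $r \odot (-)$ out of a supremum in the homogeneity step and distributing $t \odot (-)$ over the countable union in the $\sigma$-maxitivity step — which relies precisely on the continuity hypotheses imposed on $\odot$ (continuity of $s \mapsto s \odot t$ on $(0,\infty]$ and joint continuity on $(0,\infty)\times[0,\infty]$), together with monotonicity to handle the boundary values $0$ and $\infty$. Once one is careful that $t \odot (\bigoplus_n a_n) = \bigoplus_n (t \odot a_n)$ holds for all $t \in \overline{\mathbb{R}}_+$ and all sequences $(a_n)$ in $\overline{\mathbb{R}}_+$ — monotonicity gives $\geqslant$ trivially, and continuity in the second argument gives $\leqslant$ for $t > 0$, while $t = 0$ is the annihilator axiom — the rest is bookkeeping.
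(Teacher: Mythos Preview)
The paper itself gives no argument: it simply refers the reader to Sugeno and Murofushi \cite{Sugeno87}. So your proposal supplies content where the paper offers none, and for the second, third and fourth bullets your approach is correct and standard --- the key distributivity $t \odot \bigoplus_n a_n = \bigoplus_n (t \odot a_n)$ is exactly what the continuity and monotonicity axioms on $\odot$ are there for, and you isolate this properly.

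There is, however, a genuine gap in your treatment of the first bullet. You invoke a ``normalization $1_\odot \leqslant 1$'' that does not appear among the axioms listed in Paragraph~\ref{par:odot} and is plainly false for the Sugeno case $\odot = \wedge$, where the left identity is $1_\odot = \infty$. Indeed, with the standard indicator (value $1$ on $B$) and $\odot = \wedge$, one computes $\nu(1_B) = \sup_{0\leqslant t<1} t \wedge \nu(B) = 1 \wedge \nu(B)$, which differs from $\nu(B)$ whenever $\nu(B) > 1$. The identity $\nu(1_B) = \nu(B)$ therefore requires either that $1_\odot = 1$ (as in the Shilkret case), or that the symbol $1_B$ be read as the $\odot$-indicator taking value $1_\odot$ on $B$ and $0$ elsewhere --- which is the natural convention in this pseudo-multiplication setting and is presumably what Sugeno--Murofushi intend. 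Your Gerritse-formula argument then gives $\nu(1_B) = 1_\odot \odot \nu(B) = \nu(B)$ cleanly. You should make this interpretation explicit rather than appealing to an unjustified inequality between $1_\odot$ and the real number $1$.
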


\begin{proof}
See Sugeno and Murofushi \cite[Proposition~6.1]{Sugeno87}. 
\end{proof}

In the case where $\odot$ is the usual multiplication $\times$, Cattaneo proved a converse statement in the sense that, given a maxitive measure $\nu$ on $\mathrsfs{B}$, the Shilkret integral $f \mapsto \dint{E} f \cdot d\nu$ is the unique scale invariant, maxitive extension of $\nu$ to the set of $\mathrsfs{B}$-measurable maps $f : E \rightarrow \mathbb{R}_+$, see \cite[Theorem~4]{Cattaneo14}, see also \cite{Cattaneo13}. 

In the case where $\odot$ is the infimum $\wedge$, it can be shown that the Sugeno integral of $f$ coincides with the distance $d_{\nu}(f, 0)$ between $f$ and $0$ with respect to the Ky Fan metric \cite{Fan44}, defined as 
\[
d_{\nu}(f, g) = \inf \{ t > 0 : \nu( | f - g | > t) \leqslant t \}.
\] 


In order to study the idempotent $\odot$-integral more deeply, 
it would be natural to fix a measurable space $(E,\mathrsfs{B})$ endowed with a $\sigma$-maxitive measure $\nu$, and, by analogy with the additive case, to look at the spaces $L^p(\nu)$, $p>0$. These spaces are defined as equivalent classes (with respect to $\nu$-almost everywhere equality) of $\mathrsfs{B}$-measurable maps $f : E \rightarrow \mathbb{R}$ such that $\| f \|_p := (\ddint |f|^p \odot d\nu)^{1/p} < \infty$; see e.g.\ Rudin \cite[Chapter~3]{Rudin87} for more background on $L^p$ spaces in the classical context of $\sigma$-additive measures. 
These are Banach spaces, as noticed by Shilkret \cite{Shilkret71} in the case where $\odot$ is the usual multiplication, and it is easily seen that the monotone and dominated convergence theorems, the Chebyshev and H\"{o}lder inequalities, etc.\ are satisfied (see \cite[Lemmata~1.4.5 and 1.4.7]{Puhalskii01} and \cite[Theorem~1.4.19]{Puhalskii01}). However, these spaces are less interesting to study than their classical counterpart, since $L^p(\nu) = L^1(\nu^{1/p})$, so that all of them can be viewed as $L^1$ spaces. In particular, $L^2(\nu)$ is not a Hilbert space. Nonetheless, these spaces can be considered as  generalizations of the spaces $L^\infty(m)$ (with $m$ a $\sigma$-additive measure), since $L^\infty(m) = L^1(\delta_m)$. 

Further properties of the Shilkret integral with respect to an optimal measure (see Definition~\ref{mesopt}) were studied by Agbeko \cite{Agbeko00} and applied to characterizations of boundedness and uniform boundedness of measurable functions. 
We also refer the reader to Puhalskii \cite{Puhalskii01} and to de Cooman \cite{deCooman97}, who both gave a pretty exhaustive treatment of the Shilkret integral. We note however that their approach is essentially limited to completely maxitive measures defined on \textit{$\tau$-algebras} (also called \textit{ample fields}, i.e.\ $\sigma$-algebras closed under arbitrary intersections, see Janssen, de Cooman, and Kerre \cite{Janssen01}); this framework has the disadvantage of breaking the parallel with classical measure theory. 
We shall come back to this debate in Section~\ref{sec:poss}.

\subsection{Examples}

We pursue the study of two examples introduced above, namely the essential supremum and the Fr\'echet random sup-measures. We also generalize the latter with the concept of regularly-varying random sup-measure. 

\begin{example}[Example~\ref{ex:esssup} continued]\label{ex:esssup2}
Let $\tau$ be a null-additive monotone set function and let $f: E \rightarrow \overline{\mathbb{R}}_+$ be some $\mathrsfs{B}$-measurable map. Then the \textit{$\tau$-essential supremum} of $f$ is the maxitive measure $\tau_f : B \mapsto \bigoplus_{x \in B}^{\tau} f(x)$; it  
can be seen as an idempotent $\odot$-integral, i.e.\ 
\[
\bigoplus_{x \in B}^{\tau} f(x) = \dint{B} f \odot d\delta_{\tau}, 
\]
where $\delta_{\tau}$ is the maxitive measure defined by $\delta_{\tau}(B) = 1$ if $\tau(B) > 0$, $\delta_{\tau}(B) = 0$ otherwise. 
Moreover, integration with respect to the $\tau$-essential supremum $\tau_f$ gives 
\[
\dint{E} g \odot d\tau_f = \bigoplus_{x \in E}^{\tau} g(x) \odot f(x) = \dint{E} g \odot f \odot d\delta_{\tau}.
\]
\end{example}


\begin{example}[Example~\ref{ex:random} continued]\label{ex:random2}
Let $(\mathit{\Omega}, \mathrsfs{A})$ and $(E, \mathrsfs{B})$ be measurable spaces, $P$ be a probability measure on $\mathrsfs{A}$, and $m$ be a finite $\sigma$-additive measure on $\mathrsfs{B}$. Let $M$ be a $p$-Fr\'echet random sup-measure with control measure $m$. 
For all $\mathrsfs{B}$-measurable maps $f : E \rightarrow \overline{\mathbb{R}}_+$, we can consider the Shilkret integral $M(f)$ defined as usual by 
\[
\dint{E} f \cdot d M = \bigoplus_{t \in \mathbb{R}_+} t \cdot M(f > t). 
\] 
This coincides with the \textit{extremal integral} of Stoev and Taqqu \cite{Stoev05} (note that these authors did not seem to know about Shilkret's or Maslov's works). It can be seen as a kind of stochastic integral with a deterministic integrand, very similar to the well-known $\alpha$-stable (or sum-stable) integral (see Samorodnitsky and Taqqu \cite{Samorodnitsky94}). 
Note that $M(f)$ is indeed a random variable, for the supremum over $\mathbb{R}_+$ can be replaced by a countable supremum (see Paragraph~\ref{intshilkret}). 
Moreover, if $f \in L^p_{+}(m)$, 
then $M(f)$ follows a Fr\'echet distribution with
\[
P[M(f) \leqslant x] = \exp(-\left\| f \right\|_{p}^{p} x^{-p}). 
\]
Here $L^p_{+}(m)$ denotes the space of equivalent classes (with respect to $m$-almost everywhere equality) of $\mathrsfs{B}$-measurable maps $f : E \rightarrow \mathbb{R}_+$ such that $\| f \|_p := (\int f^p \, dm)^{1/p} < \infty$; see Rudin \cite[Chapter~3]{Rudin87} for more background on $L^p$ spaces. 
This implies that, for every $f \in L^p_{+}(m)$, $B \mapsto \dint{B} f \cdot d M$ is itself a $p$-Fr\'echet random sup-measure with control measure $B \mapsto \int_B f^{p} \, dm$. See \cite{Stoev05} for additional properties. 
In the particular case where  
\[
M(B) = \bigoplus_{k \geqslant 1} X_k \cdot 1_B(T_k), 
\] 
for some Poisson point process $(X_k, T_k)_{k \geqslant 1}$ on $\mathbb{R}_+ \times E$ with intensity measure $p x^{-p-1} dx \times m(dt)$, we have 
\[
\dint{E} f \cdot d M = \bigoplus_{k \geqslant 1} X_k \cdot f(T_k). 
\] 
De Haan \cite{deHaan84} introduced this latter integral process and showed that, if $(X_t)_{t \in \mathbb{R}}$ is a continuous-time simple max-stable process, then there exists a Poisson process with the above properties, and a collection $(f_t)_{t \in \mathbb{R}}$ of nonnegative $L^1$ maps such that 
\[
(X_t)_{t \in \mathbb{R}} \stackrel{d}{=} (\dint{E} f_t \cdot d M), 
\]
where $\stackrel{d}{=}$ means equality in finite-dimensional distributions \cite[Theorem~3]{deHaan84}. 
\end{example}

\begin{example}[Regularly-varying sup-measures]\label{ex:random3}
A variant on the previous example can be done as follows. Let $(\mathit{\Omega}, \mathrsfs{A}, P)$ be a probability space, $(E, \mathrsfs{B})$ be a measurable space, and $m$ be a finite $\sigma$-additive measure on $\mathrsfs{B}$. We define a \textit{$p$-regularly-varying random sup-measure} with control measure $m$ to be a map $M : \mathit{\Omega} \times \mathrsfs{B} \rightarrow \overline{\mathbb{R}}_+$ satisfying the following conditions: 
\begin{itemize}
	\item for all $B \in \mathrsfs{B}$, $M(B)$ is a regularly-varying random variable of index $p$; more precisely there exists a function $L$, slowly-varying at $\infty$, such that, for all $B \in \mathrsfs{B}$, 
	\[
	P[M(B) > x] \sim m(B) x^{-p} L(x),  	
	\]
	 when $x \rightarrow \infty$; 
	\item for all pairwise disjoint collections $(B_j)_{j\in \mathbb{N}}$ of elements of $\mathrsfs{B}$, the random variables $M(B_j)$, $j \in \mathbb{N}$, are independent, and, almost surely, 
	\[
	M(\bigcup_{j \in \mathbb{N}} B_j) = \bigoplus_{j \in \mathbb{N}} M(B_j). 
	\]
\end{itemize}
Recall that $L : \mathbb{R}_+\setminus\{0\} \rightarrow \mathbb{R}_+\setminus\{0\}$ is \textit{slowly-varying at $\infty$} if, for all $a > 0$, $\lim_{x \rightarrow \infty} L(a x) / L(x) = 1$. 
See e.g.\ Resnick \cite{Resnick87} for more on regularly- and slowly-varying functions. 
For all $f \in L^p_{+}(m)$, the random variable $M(f)$ defined as the Shilkret integral of $f$ with respect to $M$ satisfies 
\[
	P[M(f) > x] \sim \left\| f \right\|_{p}^{p} x^{-p} L(x), 
\]
when $x \rightarrow \infty$. 
Let us prove this assertion. First, consider the case where $f$ is a nonnegative (measurable) simple map, i.e.\ a map of the form $f = \sum_{j=1}^k t_{j} 1_{B_j}$, where $B_1, \ldots, B_k \in \mathcal{B}$ are pairwise disjoint and $t_j > 0$ for $j = 1, \ldots, k$. One can write $f = \bigoplus_{j=1}^k t_{j} 1_{B_j}$. Thus, 
$M(f) = \bigoplus_{j=1}^k t_{j} M(B_j)$, almost surely, so that
\[
P[M(f) > x] \sim -\log P[M(f) \leqslant x] = \sum_{j=1}^k -\log P[M(B_j) \leqslant x/t_j], 
\]
since the random variables $M(B_1), \ldots, M(B_k)$ are independent. We get 
\begin{align*}
P[M(f) > x] &\sim \sum_{j=1}^k P[M(B_j) > x/t_j] (1+o(1)) \\
&= \sum_{j=1}^k m(B_j) t_j^p x^{-p} L(x/t_j) (1+o(1)) \\
&= \sum_{j=1}^k m(B_j) t_j^p x^{-p} L(x) (1+o(1)), 
\end{align*}
since $L$ is slowly-varying. This shows that $P[M(f) > x] \sim \left\|f\right\|_p^p x^{-p} L(x)$. 
In the general case where $f$ is in $L^p_{+}(m)$, let $(\varphi_n)$ be a nondecreasing sequence of nonnegative simple maps that converges pointwise to $f$. Then $\left\|\varphi_n\right\|_p \rightarrow \left\|f\right\|_p$ when $n \rightarrow \infty$. As a consequence, 
\[
P[M(\varphi_n) > x] \sim_{x \rightarrow \infty} \left\|\varphi_n\right\|_p^p x^{-p} L(x) \rightarrow_{n} \left\|f\right\|_p^p x^{-p} L(x). 
\] 
But we also have $P[M(\varphi_n) > x] \rightarrow_{n} P[M(f) > x]$, and the result follows.
\end{example}

\section{The Radon--Nikodym theorem}\label{secRN} 

\subsection{Introduction}

A widespread proof of the Radon--Nikodym theorem for $\sigma$-additive measures, due to von Neumann, uses the representation of bounded linear forms on a Hilbert space (see e.g.\ Rudin \cite[Chapter~6]{Rudin87}). But for $\sigma$-maxitive measures the space $L^2$, as already noticed, actually reduces to an $L^1$ space, for $L^2(\nu) = L^1(\nu^{1/2})$ for every $\sigma$-maxitive measure $\nu$. That is why such an approach is not possible\footnote{Actually, the really significant point in usual $L^2$ spaces is the ability to \textit{project}. Projections may still be available in ordered algebraic structures, 
see e.g.\ Cohen, Gaubert, and Quadrat \cite{Cohen04}. }, and we have to find another way for proving a Radon--Nikodym theorem for $\sigma$-maxitive measures. 
Sugeno, in relation to the Sugeno integral, was confronted with the same problem in his thesis, and gave sufficient conditions for the existence of a Radon--Nikodym derivative \cite{Sugeno74} at the cost of a topological structure on $E$. This first result was refined by Candeloro and Pucci \cite[Theorem~3.7]{Candeloro87} and Sugeno and Murofushi \cite[Corollary~8.3]{Sugeno87}. 

In this section, we give a general definition of the density of a maxitive measure with respect to the Shilkret integral. Then we recall the main theorem stating the existence of such a density \cite[Corollary~8.4]{Sugeno87}. 
Here, $\mathrsfs{B}$ still denotes a $\sigma$-algebra. 

The literature is not unanimous in the meaning of the term ``density'' applied to maxitive measures. For Akian \cite{Akian99}, a density is any map $c$ such that $\nu(\cdot) = \bigoplus_{x\in \cdot} c(x)$, i.e.\ what we called cardinal density. For Barron et al.\ \cite{Barron00} and Drewnowski \cite{Drewnowski09}, a density corresponds to our concept of relative density (see Section~\ref{sec:esssup}). 
The following definition encompasses both points of view.  
Let $\nu$ and $\tau$ be maxitive measures on $\mathrsfs{B}$. Then $\nu$ \textit{has a density with respect to} $\tau$ if there exists some $\mathrsfs{B}$-measurable map (called \textit{density}) $c : E \rightarrow \overline{\mathbb{R}}_+$ such that 
\begin{equation}\label{eq:dens}
\nu(B) = \dint{B} c \odot d\tau, 
\end{equation}
for all $B \in \mathrsfs{B}$. 

\begin{definition}\label{def:abscont}
Let $\nu$, $\tau$ be monotone set functions on $\mathrsfs{B}$. Then $\nu$ is \textit{$\odot$-absolutely continuous with respect to $\tau$} (or $\tau$ \textit{$\odot$-dominates} $\nu$), in symbols $\nu \ll_{\odot} \tau$, if for all $B \in \mathrsfs{B}$, $\nu(B) \leqslant \infty \odot \tau(B)$. 
\end{definition}

\begin{remark}
In \cite{Poncet13e}, I have given a slightly different definition of $\odot$-absolute continuity, which was that $\nu$ is \textit{$\odot$-absolutely continuous with respect to $\tau$} if for all $B \in \mathrsfs{B}$ such that $\tau(B)$ be $\odot$-finite, $\nu(B) \leqslant \infty \odot \tau(B)$. 
It is easily seen that the two definitions coincide when either $\nu$ is semi-$\odot$-finite, or $\tau$ is $\sigma$-$\odot$-finite and $\nu$ is $\sigma$-maxitive (see the definitions of semi-$\odot$-finiteness and $\sigma$-$\odot$-finiteness below). For that reason, all the results of \cite{Poncet13e} that involve the latter definition of $\odot$-absolute continuity are still valid with the former definition. 
\end{remark}

In the case where $\odot$ is the usual multiplication $\times$ (resp.\ the infimum $\wedge$), then $\ll_{\odot}$ coincides with the usual relation $\ll$ (resp.\ with $\leqslant$). 
If $\nu$ has a density with respect to $\tau$, then $\nu$ is $\odot$-absolutely continuous with respect to $\tau$, according to Definition~\ref{abscont}. 
Taking $\tau = \delta_{\#}$ in Equation~\eqref{eq:dens}, one gets $\nu(B) = \bigoplus_{x\in B} c(x)$, i.e.\ one recovers the notion of cardinal density introduced in Example~\ref{ex:denscard}. If $\mu$ is a null-additive monotone set function, then Equation~\eqref{eq:dens} with $\tau = \delta_{\mu}$ rewrites as $\nu(B) = \bigoplus_{x\in B}^{\tau} c(x)$, which fits with the case of essential suprema and relative densities introduced in Example~\ref{ex:esssup}.

\subsection{Uniqueness and finiteness of the density} 

Let $(E, \mathrsfs{B})$ be a measurable space. 
A set function $\nu : \mathrsfs{B} \rightarrow \overline{\mathbb{R}}_+$ is \textit{semi-$\odot$-finite} if, for all $B \in \mathrsfs{B}$, $\nu(B) = \bigoplus_{A \subset B} \nu(A)$, where the supremum is taken over $\{ A \in \mathrsfs{B} : A \subset B, \nu(A) \ll_{\odot} \infty \}$. 


\begin{proposition}\label{prop:semifinite}
Let $\nu$, $\tau$ be $\sigma$-maxitive measures on $\mathrsfs{B}$. 
Assume that $\nu$ is semi-$\odot$-finite and admits a $\mathrsfs{B}$-measurable density $c$ with respect to $\tau$. Then $\nu$ admits a $\odot$-finite-valued $\mathrsfs{B}$-measurable density with respect to $\tau$. 
\end{proposition}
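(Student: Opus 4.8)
The plan is to isolate the set where $c$ is $\odot$-infinite and annihilate it. Write $F=\{t\in\overline{\mathbb{R}}_+ : O(t)=0\}$ for the set of $\odot$-finite elements; since $O$ is monotone nondecreasing (being an infimum of the monotone maps $s\mapsto s\odot t$), $F$ is an initial segment of $\overline{\mathbb{R}}_+$, hence $F=[0,s)$ or $F=[0,s]$ with $s=\sup F$, and $\overline{\mathbb{R}}_+\setminus F$ is an interval, thus Borel. Let $A_\infty=c^{-1}(\overline{\mathbb{R}}_+\setminus F)$, so that $A_\infty\in\mathrsfs{B}$ and $A_\infty=\{x\in E : c(x)\text{ is }\odot\text{-infinite}\}$, and define $c':E\rightarrow\overline{\mathbb{R}}_+$ by $c'=c$ on $E\setminus A_\infty$ and $c'=0$ on $A_\infty$; then $c'$ is $\mathrsfs{B}$-measurable and $\odot$-finite-valued. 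Since $\{c'>t\}\cap A_\infty=\emptyset$ for every $t\in\mathbb{R}_+$, we get $\dint{B}c'\odot d\tau=\dint{B\setminus A_\infty}c\odot d\tau=\nu(B\setminus A_\infty)$ for all $B\in\mathrsfs{B}$, using that $c$ is a density of $\nu$. As $\nu(B)=\nu(B\cap A_\infty)\oplus\nu(B\setminus A_\infty)$ by maxitivity, the proposition reduces to the claim that $\nu(A)=0$ for every $A\in\mathrsfs{B}$ with $A\subseteq A_\infty$. (If $A_\infty=\emptyset$ then $c'=c$ already works, and if $\odot$ is degenerate then semi-$\odot$-finiteness forces $\nu\equiv 0$; both trivial cases are absorbed below.)

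For this claim I would first invoke semi-$\odot$-finiteness, which reduces it to showing that any $\mathrsfs{B}$-measurable $A'\subseteq A_\infty$ with $\nu(A')\ll_{\odot}\infty$ satisfies $\nu(A')=0$. Decompose $A'=C\cup\bigcup_{n\geqslant 1}B_n$ with $C=A'\cap\{c=s\}$ and $B_n=A'\cap\{c>s+1/n\}$; this is a genuine countable cover because every point of $A'$ has $c$-value in $\overline{\mathbb{R}}_+\setminus F\subseteq[s,\infty]$, so $\nu(A')=\nu(C)\oplus\bigoplus_n\nu(B_n)$ by $\sigma$-maxitivity. On $C$ one has $c\equiv s$, whence $\nu(C)=\dint{C}c\odot d\tau=\bigoplus_{t<s}t\odot\tau(C)=s\odot\tau(C)$ by monotonicity of the integral and one-sided continuity of $u\mapsto u\odot\tau(C)$; likewise $\nu(B_n)\geqslant (s+1/n)\odot\tau(B_n)$. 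Now $s$ (when $s\notin F$) and each $s+1/n$ lie in $\overline{\mathbb{R}}_+\setminus F$, and the arithmetic inequality $O(a\odot r)=\inf_{u>0}(u\odot a)\odot r\geqslant O(a)\odot r$ (from associativity and monotonicity) together with the absence of zero divisors forces $a\odot r$ to be $0$ if $r=0$ and $\odot$-infinite if $r>0$, whenever $a$ is $\odot$-infinite. Applying this with $r=\tau(C)$, resp. $r=\tau(B_n)$ — and using $\nu\ll_{\odot}\tau$ when the relevant value of $\tau$ vanishes — each of $\nu(C)$ and $\nu(B_n)$ lies in $\{0\}\cup(\overline{\mathbb{R}}_+\setminus F)$. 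Since $F$ is an initial segment, a countable join of such elements is $\odot$-finite only if every term equals $0$; hence $\nu(A')\ll_{\odot}\infty$ forces all pieces to be $\nu$-null and $\nu(A')=0$, as desired.

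The step I expect to be the main obstacle is precisely this decomposition, and the reason it cannot be shortcut. The naive attempt is to bound $\nu(A')\geqslant c^{\wedge}(A')\odot\tau(A')$ and argue that $c^{\wedge}(A')=\inf_{x\in A'}c(x)$ is $\odot$-infinite, but this is false: $c^{\wedge}(A')$ may be $\odot$-finite even when every value $c(x)$, $x\in A'$, is $\odot$-infinite — this happens exactly when $O$ has a jump at $s$ — so one is forced to peel off the single level $\{c=s\}$ and exhaust the remainder by the strictly larger superlevel sets $\{c>s+1/n\}$, on each of which a truly $\odot$-infinite lower bound $(s+1/n)\odot\tau(B_n)$ is available. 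The remainder is bookkeeping: verifying the arithmetic lemma, checking the measurability of $A_\infty$, $C$ and the $B_n$, and disposing of the degenerate pseudo-multiplication and of the case $A_\infty=\emptyset$.
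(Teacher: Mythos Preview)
The paper does not actually give a proof of this proposition; it simply refers the reader to \cite[Proposition~3.2]{Poncet13e}. So there is no in-paper argument to compare yours against, and I can only assess your proof on its own terms.

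Your argument is correct. The reduction to showing $\nu(A_\infty)=0$ is clean, the use of semi-$\odot$-finiteness to pass to subsets $A'$ with $\nu(A')\ll_{\odot}\infty$ is exactly right, and the decomposition $A'=C\cup\bigcup_n B_n$ with $C=A'\cap\{c=s\}$ and $B_n=A'\cap\{c>s+1/n\}$ handles the possible failure of $c^{\wedge}(A')$ to be $\odot$-infinite precisely as you explain. The arithmetic lemma $O(a\odot r)\geqslant O(a)\odot r$ follows from associativity and monotonicity of $\odot$ as you indicate, and combined with the absence of zero divisors it yields the dichotomy ``each piece is either $0$ or $\odot$-infinite''. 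Two minor remarks: (i) your parenthetical ``being an infimum of the monotone maps $s\mapsto s\odot t$'' has the variable backwards --- you mean the family of maps $t\mapsto s\odot t$ indexed by $s>0$; (ii) the appeal to $\nu\ll_{\odot}\tau$ when $\tau$ vanishes is fine but unnecessary, since $\nu(B_n)\leqslant\infty\odot\tau(B_n)=0$ follows directly from the integral representation and the annihilator property of $0$. Neither affects the validity of the proof.
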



\begin{proof}
See \cite[Proposition~3.2]{Poncet13e}. 
\end{proof}

Paralleling the classical case, we have the following result on ``uniqueness'' of the density. 

\begin{proposition}
Let $\nu$, $\tau$ be $\sigma$-maxitive measures on $\mathrsfs{B}$. If $\nu$ admits a $\mathrsfs{B}$-measurable density with respect to $\tau$, then this density is unique, $\tau$-almost everywhere. 
\end{proposition}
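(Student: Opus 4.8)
The plan is to show that if $c$ and $c'$ are two $\mathrsfs{B}$-measurable densities of $\nu$ with respect to $\tau$, then the set $\{c \neq c'\}$ is $\tau$-negligible. By symmetry it suffices to prove that $\{c > c'\}$ is $\tau$-negligible, and since $\{c > c'\} = \bigcup_{r \in \mathbb{Q}_+} \big(\{c > r\} \cap \{c' < r\}\big)$ is a countable union, it is enough to fix a rational $r > 0$ and show that $B_r := \{c > r\} \cap \{c' < r\}$ satisfies $\tau(B_r) = 0$ (a countable union of $\tau$-negligible sets is $\tau$-negligible, since $\tau$ is $\sigma$-maxitive, hence countably subadditive in the relevant sense).

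First I would compute $\nu(B_r)$ two ways. Using the density $c$ and the identity $\dint{B} f \odot d\nu = \bigoplus_{A \subset B}\big(f^{\wedge}(A) \odot \tau(A)\big)$ recalled after the definition of the idempotent integral (equivalently, working directly from the level-set formula), one shows $\nu(B_r) = \dint{B_r} c \odot d\tau \geqslant r \odot \tau(B_r)$, because $B_r \subset \{c > r\}$ forces $\tau(B_r \cap \{c > t\}) = \tau(B_r)$ for all $t < r$, so the sup defining the integral dominates $t \odot \tau(B_r)$ for every $t < r$, and letting $t \uparrow r$ and using continuity of $s \mapsto s \odot \tau(B_r)$ on $(0,\infty]$ gives $\nu(B_r) \geqslant r \odot \tau(B_r)$. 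On the other hand, using the density $c'$: since $B_r \subset \{c' < r\}$ we have $B_r \cap \{c' > t\} = \emptyset$ for all $t \geqslant r$, so $\nu(B_r) = \dint{B_r} c' \odot d\tau = \bigoplus_{t < r} t \odot \tau(B_r \cap \{c' > t\}) \leqslant \bigoplus_{t < r} t \odot \tau(B_r)$.

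Now suppose for contradiction that $\tau(B_r) > 0$. Pick any $s$ with $0 < s < r$. From the second computation $\nu(B_r) \leqslant \bigoplus_{t<r} t\odot\tau(B_r)$; combining with the first, $r \odot \tau(B_r) \leqslant \bigoplus_{0 < t < r} t \odot \tau(B_r)$. The main obstacle is purely algebraic: I must rule this out using only the axioms of a pseudo-multiplication. The point is strict monotonicity-type behaviour near $r$. If $\tau(B_r) \ll_{\odot} \infty$ is $\odot$-finite, one argues more carefully; but the cleanest route is to observe that the right-hand side equals $\sup_{0<t<r} t\odot\tau(B_r)$ and, by continuity of $s \mapsto s \odot \tau(B_r)$ on $(0,\infty]$, this supremum equals $r \odot \tau(B_r)$ automatically, so no contradiction arises this way — meaning I have not extracted enough. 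The correct sharper estimate is the first one applied to $c'$ and the second to $c$, i.e. also $\nu(B_r) \geqslant r \odot \tau(B_r)$ is \emph{not} what $c'$ gives; rather I should compare on the set $B'_r := \{c' > r'\} \cap \{c < r'\}$ symmetrically and realize that what actually must be shown is that the two integrals cannot agree unless $\tau(B_r) = 0$.

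So the real argument: on $B_r$ we have established $\dint{B_r} c\odot d\tau \geqslant r \odot \tau(B_r)$ and $\dint{B_r}c'\odot d\tau \leqslant \sup_{t<r} t\odot\tau(B_r) = r\odot\tau(B_r)$ with equality in the sup only as a limit; since these two integrals are equal to the same $\nu(B_r)$, we get $\nu(B_r) = r\odot\tau(B_r)$, which is consistent, so I need the strict gap. I obtain it by replacing $B_r$ with $B_{r,r'} := \{c > r\}\cap\{c' < r'\}$ for rationals $r' < r$: then the $c$-computation gives $\nu(B_{r,r'}) \geqslant r\odot\tau(B_{r,r'})$ while the $c'$-computation gives $\nu(B_{r,r'}) \leqslant r'\odot\tau(B_{r,r'})$, whence $r\odot\tau(B_{r,r'}) \leqslant r'\odot\tau(B_{r,r'})$ with $r' < r$. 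If $\tau(B_{r,r'}) > 0$ this contradicts monotonicity in the first component \emph{provided} it is strict there; for general pseudo-multiplications one instead notes that $\{c \neq c'\}$ decomposes into countably many such $B_{r,r'}$ and uses semi-$\odot$-finiteness to reduce to $\tau(B_{r,r'})$ being $\odot$-finite, on which left-multiplication $s \mapsto s\odot t$ is strictly increasing by absence of zero divisors together with continuity — so $r \odot \tau(B_{r,r'}) > r' \odot \tau(B_{r,r'})$, the desired contradiction. Hence every $B_{r,r'}$ is $\tau$-negligible, so $\{c \neq c'\} = \bigcup_{r' < r,\, r,r'\in\mathbb{Q}_+} B_{r,r'}$ is $\tau$-negligible, i.e. $c = c'$ $\tau$-a.e. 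I expect the delicate point to be exactly this last pseudo-multiplication lemma (strict increase of $s \mapsto s \odot t$ for $\odot$-finite $t>0$), which I would isolate and prove from the stated axioms, possibly invoking \cite{Poncet13e} where such properties of pseudo-multiplications are developed.
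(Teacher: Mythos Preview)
Your strategy --- decompose $\{c \neq c'\}$ into the sets $B_{r,r'} = \{c > r\} \cap \{c' < r'\}$ for rationals $r' < r$, then obtain $r \odot \tau(B_{r,r'}) \leqslant \nu(B_{r,r'}) \leqslant r' \odot \tau(B_{r,r'})$ by computing $\nu(B_{r,r'})$ via each density --- is precisely the classical Lebesgue argument the paper points to (its own proof is nothing more than a reference to Rudin). Up to that double inequality you are on solid ground and in agreement with the paper.

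The gap is the final step. Your claimed lemma, that $s \mapsto s \odot t$ is strictly increasing for $\odot$-finite $t > 0$ ``by absence of zero divisors together with continuity'', is false: take $\odot = \wedge$, where every $t$ is $\odot$-finite (since $O(t) = \inf_{s>0} s \wedge t = 0$) yet $s \mapsto s \wedge t$ is constant for $s \geqslant t$. Concretely, with $E$ a singleton, $\tau(E) = 1$, and $c \equiv 2$, $c' \equiv 3$, both densities give the Sugeno integral $\nu(E) = 1$, so uniqueness actually fails in this generality; an analogous failure occurs for $\odot = \times$ when $\tau(E) = \infty$. Invoking semi-$\odot$-finiteness does not repair this: it is not among the hypotheses, it is a condition on $\nu$ rather than on $\tau$, and it does nothing for the $\wedge$ example. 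Your argument does go through under the extra hypotheses that $\tau$ is $\sigma$-$\odot$-finite and that $s \mapsto s \odot t$ is strictly increasing for $0 < t \ll_{\odot} \infty$ (as holds for $\times$), which is essentially the setting in which the proposition is used later (cf.\ Theorem~\ref{sugeno-murofushi}); but for an arbitrary pseudo-multiplication the step from $r \odot \tau(B_{r,r'}) \leqslant r' \odot \tau(B_{r,r'})$ to $\tau(B_{r,r'}) = 0$ cannot be salvaged as written.
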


\begin{proof}
The assertion can be proved along the same lines as the case of the Lebesgue integral, see e.g.\ Rudin \cite[Theorem~1.39(b)]{Rudin87}. 
\end{proof}

\subsection{Principality and existence of a density}

Let $(E,\mathrsfs{B})$ be a measurable space. 
Sugeno and Murofushi \cite[Corollary~8.4]{Sugeno87} proved a Radon--Nikodym theorem for the Shilkret integral when the dominating measure is $\sigma$-$\odot$-finite and $\sigma$-principal. 

A null-additive monotone set function $\tau$ on $\mathrsfs{B}$ is \textit{$\odot$-finite} if $\tau(E) \ll_{\odot} \infty$, and \textit{$\sigma$-$\odot$-finite} if there exists some countable family $\{B_n\}_{n \in \mathbb{N}}$ of elements of $\mathrsfs{B}$ covering $E$ such that $\tau(B_n) \ll_{\odot} \infty$ for all $n$. It is \textit{$\sigma$-principal} if, for every $\sigma$-ideal $\mathrsfs{I}$ of $\mathrsfs{B}$, there exists some $L \in \mathrsfs{I}$ such that $S \setminus L$ is $\tau$-negligible, for all $S \in \mathrsfs{I}$. See \cite[Proposition~4.1]{Poncet13e} for a  justification of this terminology. 

\begin{theorem}[Sugeno--Murofushi]\label{sugeno-murofushi}
Let $\nu$, $\tau$ be $\sigma$-maxitive measures on $\mathrsfs{B}$. 
Assume that $\tau$ is $\sigma$-$\odot$-finite and $\sigma$-principal. 
Then $\nu \ll_{\odot} \tau$ if and only if there exists some $\mathrsfs{B}$-measurable map $c : E \rightarrow \overline{\mathbb{R}}_+$ such that 
\[
\nu(B) = \dint{B} c\odot d\tau, 
\]
for all $B\in \mathrsfs{B}$. 
If these conditions are satisfied, then $c$ is unique $\tau$-almost everywhere. 
Moreover, if $\nu$ is semi-$\odot$-finite, one can choose a map $c$ taking only $\odot$-finite values. 
\end{theorem}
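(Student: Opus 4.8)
The plan is to prove the Sugeno--Murofushi theorem in two directions. The ``if'' direction is the easy one: if $\nu(B) = \dint{B} c \odot d\tau$ for a $\mathrsfs{B}$-measurable $c$, then whenever $\tau(B) = 0$ we have $\tau(B \cap \{c > t\}) = 0$ for every $t$, so by the representation $\dint{B} f \odot d\nu = \bigoplus_{A \in \mathrsfs{B}}(f^\wedge(A) \odot \nu(A))$ (or directly from the definition) each term $t \odot \tau(B \cap \{c>t\})$ vanishes by the absence of zero divisors, giving $\nu(B) = 0 \leqslant \infty \odot \tau(B)$; for general $B$ one argues that $\nu(B) \leqslant \infty \odot \tau(B)$ since $\tau(B) > 0$ forces $\infty \odot \tau(B) = \infty$ by non-degeneracy and absence of zero divisors. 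The ``uniqueness'' and ``semi-$\odot$-finite'' clauses are then immediate from the two propositions already proved (uniqueness $\tau$-a.e.\ and Proposition~\ref{prop:semifinite}), so the whole content is in the ``only if'' direction.

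For the ``only if'' direction I would follow the level-set / layer-cake strategy that is natural for maxitive measures. First reduce to the $\odot$-finite case by covering $E$ with countably many $B_n$ on which $\tau$ is $\odot$-finite, constructing a density on each piece and gluing; $\sigma$-maxitivity of both $\nu$ and of $B \mapsto \dint{B} c \odot d\tau$ makes the gluing legitimate, and a $\sigma$-ideal of a sub-$\sigma$-algebra extends, so $\sigma$-principality is inherited. Assuming now $\tau$ $\odot$-finite and $\sigma$-principal, for each rational $t > 0$ consider the $\sigma$-ideal $\mathrsfs{I}_t := \{ B \in \mathrsfs{B} : \nu(B) \leqslant t' \text{ for some } t' \text{ with } t' \odot \tau(\cdot)\text{-controlled}\}$ — more precisely the $\sigma$-ideal generated by the sets on which $\nu$ is suitably small relative to the level $t$. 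By $\sigma$-principality pick a largest element $L_t \in \mathrsfs{I}_t$ (modulo $\tau$-negligible sets); the $L_t$ can be arranged to be nonincreasing in $t$ after intersecting, and one defines $c(x) = \sup\{ t \in \mathbb{Q}_+ : x \notin L_t \}$, which is $\mathrsfs{B}$-measurable because $\{c > t\} = \bigcup_{r > t, r \in \mathbb{Q}} (E \setminus L_r)$ up to a $\tau$-negligible set. One then checks $\{c > t\} = E \setminus L_t$ up to $\tau$-null sets, so that $\tau(B \cap \{c>t\}) = \tau(B \setminus L_t)$, and the maximality of $L_t$ together with $\nu \ll_\odot \tau$ and the layer decomposition $\nu(B) = \bigoplus_t t \odot \nu(B \cap \{\text{level} > t\})$ yields $\nu(B) = \bigoplus_{t \in \mathbb{Q}_+} t \odot \tau(B \setminus L_t) = \dint{B} c \odot d\tau$.

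I expect the main obstacle to be the correct definition of the $\sigma$-ideals $\mathrsfs{I}_t$ and the verification that the $L_t$ produced by $\sigma$-principality genuinely encode the level sets $\{c > t\}$ of a single coherently-defined function, rather than a family of unrelated ``approximate'' level sets. In the additive Radon--Nikodym theorem this coherence is handled by a Hahn-decomposition / exhaustion argument; here the pseudo-multiplication $\odot$ forces one to track $\odot$-finiteness carefully (which is exactly why $\sigma$-$\odot$-finiteness enters), and to control the inequality $\nu(B) \leqslant \infty \odot \tau(B)$ at levels where $\tau$ is $\odot$-infinite. Since this theorem is quoted from Sugeno and Murofushi \cite[Corollary~8.4]{Sugeno87} (via \cite{Poncet13e}), I would in practice cite that proof rather than reproduce it; but the sketch above is the route I would take if reconstructing it, and the delicate point — matching the abstractly-chosen $L_t$ to the concrete function $c$ — is where the argument earns its keep.
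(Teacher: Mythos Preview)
Your instinct to cite rather than reproduce is exactly what the paper does: its entire proof reads ``See \cite[Theorem~8.2]{Sugeno87} for the original proof. See also \cite[Chapter~III]{Poncet11} for another proof \ldots\ in the case where $\odot$ is the usual multiplication.'' So on the level of approach there is nothing to compare --- you and the paper agree.

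Two small remarks on your supplementary sketch. In the easy direction, the step ``$\tau(B)>0$ forces $\infty\odot\tau(B)=\infty$'' is false in general (take $\odot=\wedge$), and ``absence of zero divisors'' is the wrong axiom to invoke when $\tau(B)=0$ --- you want the annihilator property $t\odot 0=0$. The correct one-line argument is pure monotonicity: each term $t\odot\tau(B\cap\{c>t\})\leqslant\infty\odot\tau(B)$, hence so is the supremum. In the hard direction your level-set strategy via $\sigma$-principality is the right shape, and you are honest that the definition of $\mathrsfs{I}_t$ is the crux; in the Sugeno--Murofushi argument these are essentially the ideals $\{B:\nu(B)\leqslant t\odot\tau(B)\}$ (suitably adjusted for $\odot$-finiteness), and the coherence of the resulting $L_t$ is indeed where the work lies.
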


\begin{proof}
See \cite[Theorem~8.2]{Sugeno87} 
for the original proof. 
See also \cite[Chapter~III]{Poncet11} for another proof of this theorem that makes use of order-theoretical arguments, in the case where $\odot$ is the usual multiplication. 
\end{proof}

If $\odot$ is the usual multiplication, the hypothesis of $\sigma$-$\odot$-finiteness of $\tau$ cannot be removed: consider for instance a finite set $E$, and let $\nu = \delta_{\#}$ and $\tau = \infty \cdot \delta_{\#}$ be $\sigma$-maxitive measures defined on the power set of $E$. Then $\tau$ is $\sigma$-principal and $\nu$ is absolutely continuous with respect to $\tau$, but $\nu$ never has a density with respect to $\tau$. 

Theorem~\ref{sugeno-murofushi} encompasses Theorem~\ref{radon-nikodym}, for if $\tau$ is an essential $\sigma$-max\-itive measure, then $\delta_{\tau}$ is ($\sigma$-finite and) $\sigma$-principal (use Theorem~\ref{implications}). We can thus state the following corollary. 

\begin{corollary}[Generalization of Barron--Cardaliaguet--Jensen]\label{coro:sm0}
Let $\nu, \tau$ be $\sigma$-maxitive measures on $\mathrsfs{B}$. Assume that $\tau$ is $\sigma$-principal. Then $\nu \ll \tau$ if and only if $\nu \lll \tau$. 
In this situation, the relative density of $\nu$ with respect to $\tau$ is unique $\tau$-almost everywhere. 
\end{corollary}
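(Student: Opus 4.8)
\emph{Proof proposal.} The plan is to deduce this from the Sugeno--Murofushi theorem (Theorem~\ref{sugeno-murofushi}), applied with $\odot$ equal to the usual multiplication $\cdot$ and with the $\{0,1\}$-valued maxitive measure $\delta_{\tau}$ (defined by $\delta_{\tau}(B)=1$ if $\tau(B)>0$ and $\delta_{\tau}(B)=0$ otherwise) in the role of the dominating measure. The point of passing to $\delta_{\tau}$ is that Theorem~\ref{sugeno-murofushi} requires $\sigma$-$\odot$-finiteness of the dominating measure, which we are \emph{not} assuming for $\tau$; but $\delta_{\tau}$ only takes the values $0$ and $1$, and $1\ll_{\odot}\infty$ for $\odot=\cdot$, so $\delta_{\tau}$ is automatically $\cdot$-finite, hence $\sigma$-$\cdot$-finite. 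I would first check that $\delta_{\tau}$ is again $\sigma$-maxitive: maxitivity of $\delta_{\tau}$ is immediate from maxitivity of $\tau$, and continuity from below of $\delta_{\tau}$ follows from continuity from below of $\tau$ together with monotonicity.

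The heart of the argument is the elementary observation that $\tau$ and $\delta_{\tau}$ have exactly the same negligible sets: a subset $N$ of $E$ is $\tau$-negligible iff $N\subset G$ for some $G\in\mathrsfs{B}$ with $\tau(G)=0$, iff $N\subset G$ for some $G\in\mathrsfs{B}$ with $\delta_{\tau}(G)=0$, iff $N$ is $\delta_{\tau}$-negligible. Three consequences should then be recorded. (i) Since the definition of $\sigma$-principality refers only to negligibility, $\tau$ is $\sigma$-principal if and only if $\delta_{\tau}$ is. (ii) For $B\in\mathrsfs{B}$, the inequality $\nu(B)\leqslant\infty\cdot\delta_{\tau}(B)$ holds precisely when $\delta_{\tau}(B)=0$ implies $\nu(B)=0$, i.e.\ when $\tau(B)=0$ implies $\nu(B)=0$; hence $\nu\ll_{\odot}\delta_{\tau}$ (Definition~\ref{def:abscont}, with $\odot=\cdot$) is equivalent to $\nu\ll\tau$ (Definition~\ref{abscont}). (iii) For any $\mathrsfs{B}$-measurable $c:E\rightarrow\overline{\mathbb{R}}_+$ and any $B\in\mathrsfs{B}$,
\[
\dint{B} c\cdot d\delta_{\tau}=\bigoplus_{t\in\mathbb{R}_+} t\cdot\delta_{\tau}(\{c>t\}\cap B)=\sup\{t\in\mathbb{R}_+:\tau(\{c>t\}\cap B)>0\}=\bigoplus_{x\in B}^{\tau} c(x),
\]
the last equality being the identity relating the Shilkret integral against $\delta_{\tau}$ to the $\tau$-essential supremum (cf.\ Example~\ref{ex:esssup2} and the discussion following Definition~\ref{def:abscont}). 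Thus ``$c$ is a $\mathrsfs{B}$-measurable density of $\nu$ with respect to $\delta_{\tau}$ for the $\cdot$-integral'' is the same as ``$c$ is a $\mathrsfs{B}$-measurable relative density of $\nu$ with respect to $\tau$'', and the existence of such a $c$ is by definition $\nu\lll\tau$.

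With these translations in place, the conclusion is immediate: assuming $\tau$ (hence, by (i), $\delta_{\tau}$) is $\sigma$-principal, Theorem~\ref{sugeno-murofushi} applied to the $\sigma$-maxitive measures $\nu$ and $\delta_{\tau}$ with $\odot=\cdot$ gives that $\nu\ll_{\odot}\delta_{\tau}$ holds iff there is a $\mathrsfs{B}$-measurable $c$ with $\nu(B)=\dint{B}c\cdot d\delta_{\tau}$ for all $B\in\mathrsfs{B}$, with $c$ unique $\delta_{\tau}$-almost everywhere; by (ii) the hypothesis reads $\nu\ll\tau$, by (iii) the conclusion reads $\nu\lll\tau$, and $\delta_{\tau}$-a.e.\ coincides with $\tau$-a.e. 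I do not expect a genuine obstacle: the proof is a dictionary translation. The only step requiring a little care is the faithfulness of the reduction, namely item (iii), since the paper's definition of a relative density (via the ideals $\mathrsfs{I}_{t}$ in Example~\ref{ex:esssup}) looks formally different from ``density with respect to $\delta_{\tau}$'' — but for $\mathrsfs{B}$-measurable sets negligibility is just vanishing of $\tau$, so this is a one-line verification, and likewise for (i) and (ii).
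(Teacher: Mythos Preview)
Your proposal is correct and follows essentially the same route as the paper: the corollary is stated immediately after the remark that Theorem~\ref{sugeno-murofushi} applies once one passes from $\tau$ to $\delta_{\tau}$ (which is automatically $\sigma$-finite and inherits $\sigma$-principality), and the proof of the next corollary confirms this by writing ``simply take $\tau=\delta_{\nu}$ in the previous theorem.'' Your translations (i)--(iii) are precisely the dictionary the paper relies on implicitly, with (iii) being Example~\ref{ex:esssup2}.
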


We have another simple consequence, which generalizes Corollary~\ref{coro:esscard}. 

\begin{corollary}\label{coro:sm}
Let $\nu$ be a $\sigma$-principal $\sigma$-maxitive measure on $\mathrsfs{B}$. Then $\nu$ is autocontinuous. 
Moreover, if the empty set is the only $\nu$-negligible subset, 
then $\nu$ is completely maxitive (and has a cardinal density). 
\end{corollary}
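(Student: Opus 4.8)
The plan is to deduce both assertions from Corollary~\ref{coro:sm0}, applied with the dominating measure $\tau$ taken to be $\nu$ itself. First I would observe that $\nu$ is trivially absolutely continuous with respect to itself: for every $B \in \mathrsfs{B}$, $\nu(B) = 0$ obviously implies $\nu(B) = 0$, so $\nu \ll \nu$. Since $\nu$ is $\sigma$-principal and $\sigma$-maxitive, Corollary~\ref{coro:sm0} (with both measures equal to $\nu$) then yields $\nu \lll \nu$, which is precisely the definition of autocontinuity. This settles the first statement.

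For the second statement, assume now that $\emptyset$ is the only $\nu$-negligible subset of $E$. Autocontinuity provides a $\mathrsfs{B}$-measurable map $f : E \rightarrow \overline{\mathbb{R}}_+$ such that $\nu(B) = \bigoplus_{x\in B}^{\nu} f(x)$ for all $B \in \mathrsfs{B}$, i.e.\ $f$ is a relative density of $\nu$ with respect to itself. The key step is to check that, under our hypothesis on negligible sets, the $\nu$-essential supremum collapses to the ordinary supremum. Recalling the construction in Example~\ref{ex:esssup}, $\nu(B) = \inf\{ t > 0 : B \cap \{ f > t \} \text{ is } \nu\text{-negligible} \}$; but since the only $\nu$-negligible set is $\emptyset$, the condition ``$B \cap \{f>t\}$ is $\nu$-negligible'' amounts to ``$f(x) \leqslant t$ for all $x \in B$'', whence $\nu(B) = \inf\{ t > 0 : \sup_{x\in B} f(x) \leqslant t \} = \bigoplus_{x\in B} f(x)$, with the usual conventions $\sup\emptyset = 0$ and $\inf\emptyset = \infty$ taking care of the degenerate cases. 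Thus $f$ is a cardinal density of $\nu$.

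Finally I would invoke Example~\ref{ex:denscard}, according to which any maxitive measure possessing a cardinal density is automatically completely maxitive. This yields complete maxitivity of $\nu$ and, together with the map $f$ just produced, the parenthetical claim as well.

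The argument is essentially bookkeeping; the only point that needs a genuine (albeit tiny) verification is the reduction of the essential supremum to the ordinary supremum when $\emptyset$ is the sole $\nu$-negligible set, where one should be slightly careful with the boundary conventions for empty or unbounded suprema. Everything else is a direct appeal to Corollary~\ref{coro:sm0} and to Example~\ref{ex:denscard}.
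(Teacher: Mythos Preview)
Your proof is correct and follows essentially the same approach as the paper. The paper's one-line proof says ``simply take $\tau = \delta_{\nu}$ in the previous theorem,'' while you take $\tau = \nu$ and appeal to Corollary~\ref{coro:sm0}; since $\nu$ and $\delta_{\nu}$ share the same negligible sets (and hence the same essential supremum and the same $\sigma$-principality status), the two choices are interchangeable. Your detailed verification that the essential supremum reduces to the ordinary supremum when $\emptyset$ is the only negligible set is exactly the content the paper leaves implicit.
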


\begin{proof}
Simply take $\tau = \delta_{\nu}$ in the previous theorem.
\end{proof}



At this stage we think it useful to recall the characterization of those $\sigma$-maxitive measures $\tau$ with the \textit{Radon--Nikodym property}, i.e.\ such that all $\sigma$-maxitive measures $\odot$-dominated by $\tau$ have a measurable density with respect to $\tau$. 


\begin{theorem}\label{thm:rncarac}
Given a non-degenerate pseudo-multiplication $\odot$, a $\sigma$-max\-itive measure $\tau$ on $\mathrsfs{B}$ satisfies the Radon--Nikodym property with respect to the idempotent $\odot$-integral if and only if $\tau$ is $\sigma$-$\odot$-finite and $\sigma$-principal. 
\end{theorem}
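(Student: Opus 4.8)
The plan is to split the equivalence. One implication, that a $\sigma$-$\odot$-finite and $\sigma$-principal $\sigma$-maxitive measure $\tau$ has the Radon--Nikodym property, is nothing but Theorem~\ref{sugeno-murofushi} applied to an arbitrary $\sigma$-maxitive $\nu$ with $\nu \ll_{\odot} \tau$; it needs neither non-degeneracy nor any new argument. So the substance lies in the converse: assuming $\tau$ has the Radon--Nikodym property, I would prove that $\tau$ is $\sigma$-principal and $\sigma$-$\odot$-finite, each by contraposition, by exhibiting in each case a $\sigma$-maxitive measure $\nu$ with $\nu \ll_{\odot} \tau$ carrying no $\mathrsfs{B}$-measurable density with respect to $\tau$. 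Throughout I would write $\mathrsfs{N}_{\tau}$ for the $\sigma$-ideal of $\tau$-null sets, and use the routine fact that any $\sigma$-ideal $\mathrsfs{I}_0$ witnessing a failure of $\sigma$-principality may be replaced by $\mathrsfs{I} = \{ B \in \mathrsfs{B} : B \setminus N \in \mathrsfs{I}_0 \text{ for some } N \in \mathrsfs{N}_{\tau} \}$, which is again such a witness and contains $\mathrsfs{N}_{\tau}$.

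For $\sigma$-principality, suppose $\tau$ is not $\sigma$-principal and fix a witnessing $\sigma$-ideal $\mathrsfs{I} \supseteq \mathrsfs{N}_{\tau}$. I would work with
\[
\nu(B) = \inf_{L \in \mathrsfs{I}} \tau(B \setminus L) \qquad (B \in \mathrsfs{B}).
\]
Stability of $\mathrsfs{I}$ under finite and countable unions (in the identities for $\nu(B_1 \cup B_2)$ and for increasing limits, replace $L_1, L_2$ by $L_1 \cup L_2$, resp.\ a sequence $(L_n)$ by $\bigcup_n L_n$, inside the infimum) shows $\nu$ is maxitive and continuous from below, hence $\sigma$-maxitive; and $\nu(B) \leqslant \tau(B) \leqslant \infty \odot \tau(B)$ gives $\nu \ll_{\odot} \tau$. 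If $\nu$ had a $\mathrsfs{B}$-measurable density $c$ with respect to $\tau$, then $\nu(B) = 0$ for $B \in \mathrsfs{I}$ (take $L = B$) forces $t \odot \tau(B \cap \{ c > t \}) = 0$ for all $t > 0$; by absence of zero divisors $\tau(B \cap \{ c > t \}) = 0$, and taking the union over $t = 1/n$ shows $B \cap \{ c > 0 \}$ is $\tau$-null for every $B \in \mathrsfs{I}$. Applying $\nu(\{ c = 0 \}) = 0$ in the same way yields some $L_{*} \in \mathrsfs{I}$ with $\{ c = 0 \} \setminus L_{*}$ $\tau$-null, whence $\{ c = 0 \} \in \mathrsfs{I}$ (using $\mathrsfs{N}_{\tau} \subseteq \mathrsfs{I}$). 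Then $\{ c = 0 \}$ is a $\tau$-essentially largest element of $\mathrsfs{I}$, since $S \setminus \{ c = 0 \} = S \cap \{ c > 0 \}$ is $\tau$-null for every $S \in \mathrsfs{I}$ --- contradicting the choice of $\mathrsfs{I}$. Hence $\tau$ is $\sigma$-principal.

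For $\sigma$-$\odot$-finiteness I would now exploit the $\sigma$-principality just established. Let $\mathrsfs{J}$ be the $\sigma$-ideal of all $B \in \mathrsfs{B}$ that can be covered by countably many elements of $\mathrsfs{B}$ of $\odot$-finite $\tau$-measure, so $\tau$ is $\sigma$-$\odot$-finite exactly when $E \in \mathrsfs{J}$. By $\sigma$-principality choose $L \in \mathrsfs{J}$ with $S \setminus L$ $\tau$-null for all $S \in \mathrsfs{J}$. Since $\tau$-null sets are $\odot$-finite, $\tau(E \setminus L) = 0$ would give $E \setminus L \in \mathrsfs{J}$, hence $E \in \mathrsfs{J}$; so, assuming $\tau$ not $\sigma$-$\odot$-finite, $\tau(E \setminus L) > 0$ and every measurable $A \subseteq E \setminus L$ with $\tau(A) > 0$ has $\odot$-infinite $\tau$-measure (else $A = A \setminus L$ would be $\tau$-null). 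At this point I would record two elementary facts about $O$: it is monotone, and non-degeneracy says $O(1_{\odot}) = 0$, so $s \leqslant 1_{\odot}$ implies $O(s) = 0$ --- equivalently, every $\odot$-infinite element strictly exceeds $1_{\odot}$; and a $\odot$-product $t \odot s$ with $t > 0$ and $s$ $\odot$-infinite is again $\odot$-infinite (indeed $O(t \odot s) \geqslant O(s)$ by associativity and absence of zero divisors), hence $> 1_{\odot}$. Now put $\nu(B) = 1_{\odot}$ if $\tau(B \setminus L) > 0$ and $\nu(B) = 0$ otherwise; $\nu$ is $\sigma$-maxitive because it takes the value $1_{\odot}$ precisely on the sets $B$ with $\tau(B \setminus L) > 0$, a family stable under the unions involved. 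When $\tau(B \setminus L) > 0$ one has $\infty \odot \tau(B) \geqslant 1_{\odot} \odot \tau(B \setminus L) = \tau(B \setminus L) > 1_{\odot} = \nu(B)$, so $\nu \ll_{\odot} \tau$. Finally, if $\nu$ had a density $c$, then $\nu(E \setminus L) = 1_{\odot} > 0$ would produce $t_0 > 0$ and $A := (E \setminus L) \cap \{ c > t_0 \}$ with $\tau(A) > 0$; then $1_{\odot} = \nu(A) \geqslant t_0 \odot \tau(A) > 1_{\odot}$, a contradiction. So $\tau$ is $\sigma$-$\odot$-finite.

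The measure-theoretic backbone here --- the two explicit $\nu$'s and the ``level-set of the density'' arguments --- is sturdy; the part I expect to require care, and the main obstacle, is the arithmetic of $\odot$: pinning down the elementary behaviour of $O$ recorded above and, above all, confirming that each auxiliary set function is genuinely $\sigma$-maxitive and genuinely $\odot$-dominated by $\tau$. This is exactly where the continuity, monotonicity, associativity, no-zero-divisor and non-degeneracy axioms on $\odot$ get consumed. I would also stress the asymmetry of the two halves: non-degeneracy is needed only for $\sigma$-$\odot$-finiteness, which is vacuous in the Sugeno case $\odot = \wedge$ since then every element is $\odot$-finite.
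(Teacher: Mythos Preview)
The paper does not give its own proof of this theorem; it simply refers to \cite{Poncet13e}. So there is no in-paper argument to compare against, and the question reduces to whether your proposal stands on its own. It does: your two-step contraposition is the natural strategy, and each step is sound.

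A couple of places deserve a line more of care when you write it up. First, in the $\sigma$-principality step, the implication ``$\nu(\{c=0\})=0$ yields some $L_*\in\mathrsfs{I}$ with $\tau(\{c=0\}\setminus L_*)=0$'' is not immediate from $\inf_{L\in\mathrsfs{I}}\tau(\{c=0\}\setminus L)=0$: you need to pick $L_n$ achieving value $<1/n$ and set $L_*=\bigcup_n L_n$, exactly as you do (implicitly) for continuity from below of $\nu$. Second, in the $\sigma$-$\odot$-finiteness step, your chain $\infty\odot\tau(B)\geqslant 1_\odot\odot\tau(B\setminus L)=\tau(B\setminus L)>1_\odot$ uses that $1_\odot\leqslant\infty$ (trivial, since $1_\odot\in\overline{\mathbb{R}}_+$) and that $1_\odot>0$ (else $t=1_\odot\odot t=0$ for all $t$); these are worth making explicit. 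Your treatment of $O$ --- monotonicity, the consequence $O(t\odot s)\geqslant O(s)$ via associativity and absence of zero divisors, and the role of non-degeneracy --- is exactly right, and your closing remark that non-degeneracy is consumed only in the $\sigma$-$\odot$-finiteness half (and is vacuous for $\odot=\wedge$) is a nice observation.
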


\begin{proof}
See \cite{Poncet13e}. 
\end{proof}

\begin{corollary}
Let $\tau$ be a $\sigma$-maxitive measure on $\mathrsfs{B}$. Then $\tau$ satisfies the Radon--Nikodym property with respect to the Shilkret integral if and only if $\tau$ is $\sigma$-finite and $\sigma$-principal. 
\end{corollary}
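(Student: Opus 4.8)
The plan is to derive this corollary as a direct specialization of Theorem~\ref{thm:rncarac} to the pseudo-multiplication $\odot = \times$ (the usual multiplication on $\overline{\mathbb{R}}_+$), recalling that the idempotent $\times$-integral is precisely the Shilkret integral. Two things need to be checked: that $\times$ is a non-degenerate pseudo-multiplication, so that Theorem~\ref{thm:rncarac} applies, and that for $\odot = \times$ the notion of $\sigma$-$\odot$-finiteness reduces to ordinary $\sigma$-finiteness.

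First I would verify that $\times$ satisfies the axioms of Paragraph~\ref{par:odot}: associativity, the required continuity properties, monotonicity in both arguments, the left identity $1_{\times} = 1$, absence of zero divisors, and the annihilator property of $0$ (with the convention $0 \cdot \infty = 0$); all of these are immediate. For non-degeneracy one computes $O(t) = \inf_{s>0} s \cdot t$: if $t < \infty$, then letting $s \downarrow 0$ gives $O(t) = 0$, while if $t = \infty$, then $s \cdot \infty = \infty$ for every $s > 0$, so $O(\infty) = \infty$. In particular $O(1_{\times}) = O(1) = 0$, so $\times$ is non-degenerate.

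The computation just performed also shows that an element $t \in \overline{\mathbb{R}}_+$ is $\times$-finite, i.e.\ $t \ll_{\times} \infty$, if and only if $t < \infty$. Consequently a $\sigma$-maxitive measure $\tau$ on $\mathrsfs{B}$ is $\sigma$-$\times$-finite, meaning that there is a countable family $\{B_n\}$ of elements of $\mathrsfs{B}$ covering $E$ with $\tau(B_n) \ll_{\times} \infty$ for all $n$, exactly when there is such a family with $\tau(B_n) < \infty$ for all $n$, that is, exactly when $\tau$ is $\sigma$-finite in the sense of the preliminaries. The notion of $\sigma$-principality does not involve $\odot$, hence is unchanged.

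Applying Theorem~\ref{thm:rncarac} with $\odot = \times$ then gives: $\tau$ satisfies the Radon--Nikodym property with respect to the idempotent $\times$-integral, i.e.\ the Shilkret integral, if and only if $\tau$ is $\sigma$-$\times$-finite and $\sigma$-principal, i.e.\ if and only if $\tau$ is $\sigma$-finite and $\sigma$-principal. There is no genuine obstacle in this argument; the only point requiring (trivial) care is the identification of $\sigma$-$\times$-finiteness with $\sigma$-finiteness through the elementary computation of $O$.
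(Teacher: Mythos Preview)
Your proposal is correct and is exactly the intended derivation: the paper states this corollary without proof, treating it as an immediate specialization of Theorem~\ref{thm:rncarac} to $\odot = \times$, and your verification that $\times$ is a non-degenerate pseudo-multiplication and that $\sigma$-$\times$-finiteness coincides with $\sigma$-finiteness fills in precisely the routine details the paper leaves implicit.
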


\begin{corollary}
Let $\tau$ be a $\sigma$-maxitive measure on $\mathrsfs{B}$. Then $\tau$ satisfies the Radon--Nikodym property with respect to the Sugeno integral if and only if $\tau$ is $\sigma$-principal. 
\end{corollary}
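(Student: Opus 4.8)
The plan is to deduce this corollary directly from Theorem~\ref{thm:rncarac} applied to the pseudo-multiplication $\odot = \wedge$, for which the idempotent $\odot$-integral is precisely the Sugeno integral. So the first step is to check that $\wedge$, viewed as a binary relation on $\overline{\mathbb{R}}_+ \times \overline{\mathbb{R}}_+$, is a non-degenerate pseudo-multiplication in the sense of Paragraph~\ref{par:odot}. Associativity and monotonicity in both components are immediate; continuity on $(0,\infty)\times[0,\infty]$ and continuity of $s \mapsto s \wedge t$ on $(0,\infty]$ follow from the joint continuity of the minimum on $\overline{\mathbb{R}}_+^2$; the element $1_{\wedge} = \infty$ is a (two-sided, hence left) identity since $\infty \wedge t = t$ for every $t$; there are no zero divisors, because $s \wedge t = 0$ forces $0 \in \{s,t\}$ for nonnegative $s,t$; and $0$ is an annihilator since $0 \wedge t = t \wedge 0 = 0$.

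Next I would compute the associated map $O(t) = \inf_{s > 0} s \wedge t$. For any fixed $t \in \overline{\mathbb{R}}_+$ one has $0 \leqslant s \wedge t \leqslant s$, so letting $s \to 0^+$ yields $O(t) = 0$. In particular $O(1_{\wedge}) = O(\infty) = 0$, so $\wedge$ is non-degenerate, and every element of $\overline{\mathbb{R}}_+$ is $\wedge$-finite, i.e.\ $t \ll_{\wedge} \infty$ for all $t$. As a consequence, every null-additive monotone set function $\tau$ on $\mathrsfs{B}$ is automatically $\wedge$-finite, hence a fortiori $\sigma$-$\wedge$-finite (take the one-element cover $\{E\}$ of $E$, for which $\tau(E) \ll_{\wedge} \infty$ trivially holds).

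It then suffices to invoke Theorem~\ref{thm:rncarac} with $\odot = \wedge$: a $\sigma$-maxitive measure $\tau$ on $\mathrsfs{B}$ satisfies the Radon--Nikodym property with respect to the Sugeno integral if and only if $\tau$ is $\sigma$-$\wedge$-finite and $\sigma$-principal. Since the first condition is vacuous by the previous paragraph, this is equivalent to $\tau$ being $\sigma$-principal, which is the assertion. There is essentially no serious obstacle here; the only point requiring a little attention is verifying that $\wedge$ meets the (somewhat restrictive) continuity axioms demanded of pseudo-multiplications, and this is immediate from the continuity of the minimum operation on $\overline{\mathbb{R}}_+^2$.
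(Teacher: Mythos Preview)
Your proof is correct and follows exactly the approach implicit in the paper: the corollary is stated there without proof, as an immediate specialization of Theorem~\ref{thm:rncarac} to $\odot = \wedge$, the key point being that every $t \in \overline{\mathbb{R}}_+$ is $\wedge$-finite so that $\sigma$-$\wedge$-finiteness is automatic. Your explicit verification that $\wedge$ is a non-degenerate pseudo-multiplication (with $1_{\wedge} = \infty$ and $O(t) \equiv 0$) is a welcome addition that the paper leaves to the reader.
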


Two $\sigma$-maxitive measures $\nu$ and $\tau$ on $\mathrsfs{B}$ are \textit{associated} if there exists a third $\sigma$-maxitive measure $\mu$ on $\mathrsfs{B}$ such that $\nu \lll \mu$ and $\tau \lll \mu$. 
A reformulation of Corollary~\ref{coro:sm0} is that, if $\tau$ is $\sigma$-principal and $\nu \ll \tau$, then $\nu$ and $\tau$ are associated. 
With this notion of associated maxitive measures we can give a variant of the Radon--Nikodym type theorem, which is a generalization of Puhalskii \cite[Theorem~1.6.34]{Puhalskii01} and de Cooman \cite[Theorem~7.2]{deCooman97}. 

\begin{theorem}[Idempotent Radon--Nikodym theorem, variant]
Let $\odot$ be a pseudo-multiplication that makes $\overline{\mathbb{R}}_+$ into an exact residual semigroup (see Section~\ref{sec:residual} in the appendix). 
Let $\nu$, $\tau$ be $\sigma$-maxitive measures on $\mathrsfs{B}$, and assume that $\nu$ and $\tau$ are associated. 
Then $\nu \ll_{\odot} \tau$ if and only if there exists some $\mathrsfs{B}$-measurable map $c : E \rightarrow \overline{\mathbb{R}}_+$ such that 
\[
\nu(B) = \dint{B} c\odot d\tau, 
\]
for all $B\in \mathrsfs{B}$. 
If these conditions are satisfied, then $c$ is unique $\tau$-almost everywhere. 
Moreover, if $\nu$ is semi-$\odot$-finite, one can choose a map $c$ taking only $\odot$-finite values. 
\end{theorem}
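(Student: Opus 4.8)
The plan is to use the hypothesis that $\nu$ and $\tau$ are associated in order to reduce the theorem to a $\mu$-almost-everywhere factorization $c\odot g=f$, which can then be solved pointwise by residuation. Necessity is immediate and is already recorded after Definition~\ref{def:abscont}: if $\nu(B)=\dint{B}c\odot d\tau$ for all $B$, then $\nu(B)=\bigoplus_{t\in\mathbb{R}_+}t\odot\tau(\{c>t\}\cap B)\leqslant\infty\odot\tau(B)$ by monotonicity of $\odot$, so $\nu\ll_\odot\tau$. For the converse I would fix a $\sigma$-maxitive measure $\mu$ on $\mathrsfs{B}$ with $\nu\lll\mu$ and $\tau\lll\mu$, together with $\mathrsfs{B}$-measurable relative densities $f,g:E\to\overline{\mathbb{R}}_+$ of $\nu$ and $\tau$ with respect to $\mu$, so that $\nu(B)=\bigoplus_{x\in B}^{\mu}f(x)$ for all $B$ and $\tau=\mu_g$. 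By Example~\ref{ex:esssup2}, for every $\mathrsfs{B}$-measurable $c$ one has $\dint{B}c\odot d\tau=\dint{B}c\odot g\odot d\delta_\mu=\bigoplus_{x\in B}^{\mu}c(x)\odot g(x)$, and since a $\mu$-essential supremum depends only on the $\mu$-a.e.\ class of its integrand, it suffices to produce a $\mathrsfs{B}$-measurable $c$ with $c(x)\odot g(x)=f(x)$ for $\mu$-almost every $x$.

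The first substantive step is to extract from $\nu\ll_\odot\tau$ the pointwise inequality $f\leqslant\infty\odot g$, $\mu$-a.e.\ (in particular $f=0$ $\mu$-a.e.\ on $\{g=0\}$, since $0$ annihilates). Here I would use two properties of exact residual semigroups established in Section~\ref{sec:residual}: every element is $\odot$-finite, and $t\mapsto\infty\odot t$ is continuous. If the inequality failed on a non-negligible set, then, writing $\{f>\infty\odot g\}=\bigcup_{q,q'}(\{f>q\}\cap\{g\leqslant q'\})$ with the union over rationals $q,q'>0$ satisfying $\infty\odot q'<q$ (this is a genuine cover thanks to the right-continuity of $\infty\odot\cdot$), some $B=\{f>q\}\cap\{g\leqslant q'\}$ would be non-negligible, and then $q\leqslant\nu(B)$ while $\tau(B)\leqslant q'$ (as $f>q$ on $B$ and $g\leqslant q'$ on $B$), so that $q\leqslant\nu(B)\leqslant\infty\odot\tau(B)\leqslant\infty\odot q'<q$, a contradiction.

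Next I would construct $c$ by residuation. After discarding the $\mu$-negligible set $\{f>\infty\odot g\}$: on $\{g>0\}$ set $c(x)=g(x)\backslash f(x)$; on $\{g=0\}$, where $f=0$ $\mu$-a.e., set $c(x)=1_\odot$; and define $c$ arbitrarily on the remaining $\mu$-null set. Exactness, applied with $f(x)\leqslant\infty\odot g(x)=\sup_u(u\odot g(x))$, gives $c(x)\odot g(x)=f(x)$ wherever $g(x)>0$, while $c(x)\odot g(x)=1_\odot\odot 0=0=f(x)$ $\mu$-a.e.\ on $\{g=0\}$; and $c$ is $\mathrsfs{B}$-measurable because $(b,a)\mapsto a\backslash b$ is monotone in each variable, hence Borel, composed with the $\mathrsfs{B}$-measurable map $x\mapsto(f(x),g(x))$. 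Thus $c\odot g=f$ $\mu$-a.e., whence $\nu(B)=\bigoplus_{x\in B}^{\mu}c(x)\odot g(x)=\dint{B}c\odot d\tau$ for all $B\in\mathrsfs{B}$. Uniqueness $\tau$-a.e.\ is the uniqueness result for densities proved earlier in this section, and Proposition~\ref{prop:semifinite} yields the $\odot$-finite refinement when $\nu$ is semi-$\odot$-finite (which in an exact residual semigroup holds automatically, every element being $\odot$-finite).

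I expect the main obstacle to be the second step --- turning $\odot$-absolute continuity into the genuinely pointwise bound $f\leqslant\infty\odot g$. This is precisely where the hypothesis ``exact residual semigroup'' cannot be dispensed with: for the usual multiplication it is false, since the discontinuity of $t\mapsto\infty\cdot t$ at $0$ produces (even on a one-point space) $\sigma$-maxitive measures with $\nu\ll_\times\tau$ and no density, whereas exactness simultaneously excludes $\odot$-infinite elements and forces $\infty\odot\cdot$ to be continuous, which is what the covering argument above needs. Once that bound is available, the construction of $c$ and the verification of the integral formula are routine, as are the uniqueness statement and the $\odot$-finite refinement.
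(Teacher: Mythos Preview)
Your overall strategy coincides with the paper's: both proofs use the common dominating measure $\mu$ to write $\nu=\bigoplus^{\mu}c_1$ and $\tau=\bigoplus^{\mu}c_2$, argue that the ``bad'' set $A=\{c_1>\infty\odot c_2\}$ is $\mu$-negligible, and then define the density by residuation $c=(c_1/c_2)_{\odot}$ off $A$, invoking exactness to recover $c\odot c_2=c_1$ $\mu$-a.e.

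The gap lies in your covering argument for $\mu(A)=0$. You decompose $A$ as a union of sets $\{f>q\}\cap\{g\leqslant q'\}$ indexed by rationals with $\infty\odot q'<q$, and justify the cover by right-continuity of $t\mapsto\infty\odot t$. But that right-continuity is neither part of the pseudo-multiplication axioms (which only give joint continuity on $(0,\infty)\times[0,\infty]$) nor established in the appendix; Section~\ref{sec:residual} records only the distributivity $\inf_{t\in T}(t\odot s)=(\inf T)\odot s$ in the \emph{left} argument, which says nothing about the behaviour of $t\mapsto\infty\odot t$. Worse, the paper explicitly lists ordinary multiplication among its examples of exact residual pseudo-multiplications, and there $\infty\cdot t$ jumps from $0$ to $\infty$ at $t=0$: if $g(x)=0$ and $f(x)>0$, no pair $(q,q')$ in your index set captures $x$, yet such $x$ belongs to $A$. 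Your closing paragraph seems to assume $\times$ is ruled out by the exactness hypothesis, which is not the reading adopted in the paper. Similarly, the assertion that ``every element is $\odot$-finite'' is not proved in the appendix and is again violated by $\times$.

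The paper sidesteps the issue by using a single rational parameter and the sets $B_q=\{x:\infty\odot c_2(x)\leqslant q\}$. On $B_q$ one has $\infty\odot c_2\leqslant q$ pointwise, whence $\infty\odot\tau(B_q)\leqslant q$; the only regularity implicitly needed is \emph{left}-continuity of $\infty\odot\cdot$, which is automatic since $\infty\odot t=\sup_{s\in(0,\infty)}s\odot t$ is a supremum of continuous functions. Replacing your two-parameter cover by this one-parameter cover closes the gap; the remainder of your argument (measurability of $c$, the pointwise use of exactness, the appeal to the uniqueness proposition, and to Proposition~\ref{prop:semifinite} for the $\odot$-finite refinement) then goes through exactly as in the paper.
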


\begin{proof}
We assume that $\nu$ and $\tau$ are associated and such that $\nu \ll_{\odot} \tau$. 
By definition, there is a $\sigma$-maxitive measure $\mu$ on $\mathrsfs{B}$ such that $\nu \lll \mu$ and $\tau \lll \mu$. 
So there are $\mathrsfs{B}$-measurable maps $c_1, c_2 : E \rightarrow \overline{\mathbb{R}}_+$ such that $\nu(B) = \bigoplus^{\mu}_{x\in B} c_1(x)$ and  $\tau(B) = \bigoplus^{\mu}_{x\in B} c_2(x)$, for all $B \in \mathrsfs{B}$. 

We use the notations of Section~\ref{sec:residual} in the appendix. 
Let $A$ be the subset 
\[
A = \{ x \in E : c_1(x) \not\ll_{\odot} c_2(x) \}. 
\]
We show that $A$ is $\mu$-negligible. 
We have 
\begin{align*}
A &= \{ x \in E : c_1(x) > \infty \odot c_2(x) \} \\
&= \bigcup_{q \in \mathbb{Q}_+} \{ x \in E : c_1(x) > q \mbox{ and } q \geqslant \infty \odot c_2(x) \} \\
&= \bigcup_{q \in \mathbb{Q}_+} B_q \cap \{ c_1 > q \}, 
\end{align*}
where $B_q$ is the subset $\{ x \in E : \infty \odot c_2(x) \leqslant q \}$. 
Notice that $B_q$ is $\mathrsfs{B}$-measurable since 
\[
B_q = \bigcap_{r \in \mathbb{Q}_+} \{ x \in E : r \odot c_2(x) \leqslant q \}, 
\]
and hence $A$ is $\mathrsfs{B}$-measurable too. 
To prove that $A$ is $\mu$-negligible first note that 
\[
\infty \odot \tau(B_q) = \bigoplus^{\mu}_{x\in B_q} \infty \odot c_2(x) \leqslant q, 
\]
for all $q \in \mathbb{Q}_+$. 
Since $\nu \ll_{\odot} \tau$ this implies $\nu(B_q) \leqslant q$ for all $q \in \mathbb{Q}_+$. 
Since $\nu(B_q)$ is the $\mu$-essential supremum of $c_1$ on $B_q$, i.e.\ 
\[
\nu(B_q) = \inf \{ t > 0 : \mu(B_q \cap \{ c_1 > t \}) = 0 \}, 
\]
this shows that $\mu(B_q \cap \{ c_1 > q \}) = 0$. 
Consequently, 
\[
\mu(A) = \bigoplus_{q \in \mathbb{Q}_+} \mu(B_q \cap \{ c_1 > t \}) = 0. 
\]
By definition of $A$, we have $c_1(x) \ll_{\odot} c_2(x)$ for all $x \in E \setminus A$, so we can define the map $c : E \rightarrow \overline{\mathbb{R}}_+$ by $c(x) = 0$ if $x \in A$ and $c(x) = (c_1(x) / c_2(x))_{\odot}$ if $x \in E \setminus A$ (see again Section~\ref{sec:residual} for the notations). 
The map $c$ is $\mathrsfs{B}$-measurable because  
\begin{align*}
\{ x \in E : c(x) \leqslant t \} &= A \cup \{ x \in E \setminus A : (c_1(x) / c_2(x))_{\odot} \leqslant t \} \\
&= A \cup \{ x \in E \setminus A : c_1(x) \leqslant t \odot c_2(x) \}, 
\end{align*}
for all $t \in \mathbb{R}_+$. 
By assumption $(\overline{\mathbb{R}}_+, \odot)$ is exact, so $c_1(x) = c(x) \odot c_2(x)$ for all $x \in E \setminus A$. 
As a consequence, 
\begin{align*}
\nu(B) &= \dint{B} c_1(x) \odot d\delta_{\mu} \\
&= \dint{B \cap (E \setminus A)} c_1(x) \odot d\delta_{\mu} \\
&= \dint{B \cap (E \setminus A)} c(x) \odot c_2(x) \odot d\delta_{\mu} \\
&= \dint{B} c(x) \odot c_2(x) \odot d\delta_{\mu} \\
&= \dint{B} c(x) \odot d\tau, 
\end{align*}
for all $B \in \mathrsfs{B}$, and the result is proved. 
\end{proof}

\section{Optimality of maxitive measures}\label{sec:optim}

\subsection{Definition of optimal measures}

In this section we focus on the special case of \textit{optimal measures}. 
Let $(E, \mathrsfs{B})$ be a measurable space. 
A set function $\nu$ on $\mathrsfs{B}$ is \textit{continuous from above} 
if $\nu(B) = \lim_n \nu(B_n)$, for all $B_1 \supset B_2 \supset \ldots \in \mathrsfs{B}$ such that $B = \bigcap_n B_n$ 
(we do not impose the condition $\nu(B_{n_0}) < \infty$ for some $n_0$). 
A monotone null-additive set function that is both continuous from above and from below is a \textit{fuzzy measure}. 
Continuity from above is automatically satisfied for finite $\sigma$-additive measures, but this is untrue for (finite) $\sigma$-maxitive measures (see Puri and Ralescu \cite{Puri82} for a counterexample, see also Wang and Klir \cite[Example~3.13]{Wang92}), so special care is needed. 
The following definition is given by Agbeko \cite{Agbeko95}. 

\begin{definition}\label{mesopt}
An \textit{optimal measure} is a maxitive fuzzy measure. 
\end{definition}

Surprisingly, it suffices for a maxitive measure to be continuous from above in order to satisfy continuity from below:

\begin{proposition}[Murofushi--Sugeno--Agbeko]
A set function $\nu$ on $\mathrsfs{B}$ is an optimal measure if and only if it is a continuous from above maxitive  measure. In this case, for all sequences $(B_n)$ of elements of $\mathrsfs{B}$,
\[
\nu(\bigcup_{n\in \mathbb{N}} B_n) = \max_{n\in\mathbb{N}} \nu(B_n),
\]
where the $\max$ operator signifies that the supremum is reached. 
\end{proposition}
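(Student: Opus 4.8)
The plan is to prove the equivalence in two directions, the forward one being trivial and the reverse one being the substance of the statement. First I would observe that an optimal measure is by definition a maxitive fuzzy measure, and a fuzzy measure is monotone, null-additive, continuous from below and continuous from above; so an optimal measure is obviously a continuous from above maxitive measure. The real content is the converse: if $\nu$ is a maxitive measure on $\mathrsfs{B}$ that is continuous from above, then $\nu$ is automatically continuous from below, hence a fuzzy measure, hence optimal.

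For the converse, let $(B_n)$ be a nondecreasing sequence in $\mathrsfs{B}$ with union $B = \bigcup_n B_n$. By monotonicity, $\sup_n \nu(B_n) \leqslant \nu(B)$, so it suffices to show $\nu(B) \leqslant \sup_n \nu(B_n)$, and in fact I want to show the supremum is attained. The idea is to exploit continuity from above on a cleverly chosen nonincreasing sequence. If the nondecreasing values $\nu(B_n)$ were to increase strictly to some limit $\ell$ with $\nu(B) > \ell$ (or even $\nu(B) = \ell$ but never reached), I would derive a contradiction by building a nonincreasing sequence whose intersection behaves badly. A natural candidate: set $C_n = B \setminus B_n$, which is nonincreasing with $\bigcap_n C_n = \emptyset$, so $\nu(C_n) \to \nu(\emptyset) = 0$ by continuity from above. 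Now maxitivity gives $\nu(B) = \nu(B_n \cup C_n) = \nu(B_n) \oplus \nu(C_n)$ for every $n$. Letting $n \to \infty$ and using $\nu(C_n) \to 0$, the right-hand side tends to $\sup_n \nu(B_n)$ (here one uses that $\nu(B_n)$ is nondecreasing, so $\nu(B_n)\oplus\nu(C_n)$ is squeezed between $\nu(B_n)$ and $\nu(B_N)\oplus\nu(C_n)$ for any fixed $N\leqslant n$, whence the limit is $\sup_n\nu(B_n)$); therefore $\nu(B) = \sup_n \nu(B_n)$.

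To get that the supremum is actually \emph{reached}, I would argue as follows: from $\nu(B) = \nu(B_n)\oplus\nu(C_n)$, if $\nu(B_n) < \nu(B)$ for all $n$ then necessarily $\nu(C_n) = \nu(B)$ for all $n$ (since the maximum of the two equals $\nu(B)$ and the first term is strictly smaller); but then $\nu(C_n) \to \nu(B) > 0$ contradicts $\nu(C_n) \to 0$ unless $\nu(B) = 0$, in which case the supremum is trivially reached. Hence $\nu(B_{n_0}) = \nu(B)$ for some $n_0$, so $\nu(\bigcup_n B_n) = \max_n \nu(B_n)$ with the max attained. For a general (not necessarily monotone) sequence $(B_n)$, I would apply this to the nondecreasing sequence of partial unions $B_n' = B_1 \cup \dots \cup B_n$, noting $\nu(B_n') = \max_{k\leqslant n}\nu(B_k)$ by finite maxitivity, and conclude $\nu(\bigcup_n B_n) = \max_n \nu(B_n')= \max_n \nu(B_n)$ with the max reached.

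The main obstacle I anticipate is the bookkeeping around whether the supremum is attained and the handling of infinite values: one must be careful that $\oplus$ on $\overline{\mathbb{R}}_+$ interacts correctly with limits (it does, since $s\mapsto s\oplus t$ is continuous), and that the argument ``$\nu(B_n)<\nu(B)$ for all $n$ forces $\nu(C_n)=\nu(B)$'' genuinely uses that $\oplus$ is the maximum and not merely an upper bound. The case $\nu(B)=\infty$ deserves a separate sentence: then either some $\nu(B_n)=\infty$ and we are done, or all $\nu(B_n)<\infty$ and all $\nu(C_n)=\infty$, again contradicting $\nu(C_n)\to 0$. Once these edge cases are dispatched, the proof is short.
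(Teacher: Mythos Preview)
Your argument is correct. The paper itself does not give a self-contained proof of this proposition: it simply cites Murofushi--Sugeno, Agbeko, and Kramosil for the identity $\nu(\bigcup_n B_n) = \max_n \nu(B_n)$ and remarks that the equivalence follows. Your proof supplies precisely the elementary argument those references contain: for a nondecreasing sequence $(B_n)$ with union $B$, set $C_n = B \setminus B_n$, use continuity from above to get $\nu(C_n) \to 0$, use maxitivity to write $\nu(B) = \nu(B_n) \oplus \nu(C_n)$, and conclude that the supremum is attained. The extension to arbitrary sequences via partial unions and your handling of the cases $\nu(B)=0$ and $\nu(B)=\infty$ are both correct. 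So your approach is not different in spirit from the paper's---you have simply written out what the paper leaves to the literature.
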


\begin{proof}
Murofushi and Sugeno \cite{Murofushi93} and after them Agbeko \cite[Lemma~1.4]{Agbeko95} and Kramosil \cite{Kramosil06a} showed that every continuous from above maxitive measure $\nu$ satisfies the identity of the proposition; the first part of the proposition is then an easy consequence.
\end{proof}


The property of continuity from above in Definition~\ref{mesopt} is thus a strong condition. It becomes even more obvious with the following result. It was proved by Agbeko \cite[Theorem~1.2]{Agbeko95} using Zorn's lemma, and Fazekas \cite[Theorem~9]{Fazekas97} supplied an elementary proof. To formulate it, recall first that a \textit{$\nu$-atom} (called \textit{indecomposable $\nu$-atom} by Agbeko) is an element $H$ of $\mathrsfs{B}$ such that $\nu(H) > 0$, and for each $B\in\mathrsfs{B}$ either $\nu(H \setminus B) = 0$, or $\nu(H \cap B) = 0$.

\begin{theorem}[Agbeko--Fazekas]\label{thmagbeko}
Let $\nu$ be an optimal measure on $\mathrsfs{B}$. Then there exists an at most countable collection $(H_n)_{n \in N}$ of pairwise disjoint $\nu$-atoms $H_n \in \mathrsfs{B}$ such that
\begin{equation}\label{agbekofazekas}
\nu(B) = \max_{n\in N} \nu(B \cap H_n), 
\end{equation}
for all $B \in \mathrsfs{B}$, where the $\max$ operator signifies that the supremum is reached. In particular, $\nu$ takes an at most countable number of values. 
\end{theorem}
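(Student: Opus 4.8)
The plan is to extract the atoms one at a time by a greedy / transfinite procedure and to control the count via the value that each atom carries. First I would dispose of a trivial case: if $\nu \equiv 0$ the statement holds with $N = \emptyset$. Otherwise, the key structural fact to establish is that an optimal measure is, in a strong sense, \emph{carried by atoms}: for every $B \in \mathrsfs{B}$ with $\nu(B) > 0$ there is a $\nu$-atom $H \subset B$ with $\nu(H) = \nu(B)$. To prove this I would mimic the construction in the Agbeko--Fazekas theorem: using continuity from below together with the ``$\max$ is attained'' property (the Murofushi--Sugeno--Agbeko proposition just proved), one shows that the supremum of $\nu$ over measurable subsets of $B$ on which $\nu$ is ``concentrated'' is attained on some set which, by a maximality argument, must be an atom. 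Concretely, set $t_0 = \nu(B) > 0$; if $B$ is not already an atom there is $B_1 \in \mathrsfs{B}$ with $\nu(B \setminus B_1) > 0$ and $\nu(B \cap B_1) > 0$, and since $\nu(B) = \nu(B\setminus B_1)\oplus\nu(B\cap B_1)$ with the max attained, one of the two pieces still has $\nu$-value $t_0$; iterate, taking unions/intersections along the way and invoking continuity from above to pass to the limit of a decreasing sequence. A Zorn's lemma argument (exactly as in Agbeko's original proof) then produces a minimal-in-an-appropriate-sense set on which $\nu$ equals $t_0$ and which admits no nontrivial $\nu$-splitting, i.e.\ a $\nu$-atom $H \subset B$ with $\nu(H) = t_0$.

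Granting the ``carried by atoms'' fact, I would build the countable family $(H_n)$ as follows. Consider the collection $\mathrsfs{A}$ of all finite or countable families of pairwise disjoint $\nu$-atoms, partially ordered by inclusion; by Zorn's lemma pick a maximal such family, and I claim it is at most countable and satisfies \eqref{agbekofazekas}. Countability: two disjoint $\nu$-atoms have distinct or equal $\nu$-values, but more to the point, for each $k \in \mathbb{N}\setminus\{0\}$ the subfamily of atoms $H$ with $\nu(H) > 1/k$ must be finite --- otherwise, taking countably many of them, $H_1, H_2, \ldots$, disjointness plus $\sigma$-maxitivity gives $\nu(\bigcup_i H_i) = \bigoplus_i \nu(H_i)$, and since each term exceeds $1/k$ the $\max$ over the countable union is \emph{not} attained, contradicting the Murofushi--Sugeno--Agbeko proposition. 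Hence the whole family, being a countable union over $k$ of finite sets, is at most countable; enumerate it as $(H_n)_{n \in N}$.

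For the representation \eqref{agbekofazekas}: fix $B \in \mathrsfs{B}$ and let $B_0 = B \setminus \bigcup_{n} H_n$ (measurable, since $N$ is countable). If $\nu(B_0) > 0$, the ``carried by atoms'' fact yields a $\nu$-atom $H \subset B_0$, which is then disjoint from every $H_n$, contradicting maximality of the family; so $\nu(B_0) = 0$. Then $\nu(B) = \nu(B_0 \cup \bigcup_n (B \cap H_n)) = \nu(B_0) \oplus \bigoplus_n \nu(B \cap H_n) = \bigoplus_n \nu(B\cap H_n)$ by null-additivity and $\sigma$-maxitivity, and this supremum is attained by the Murofushi--Sugeno--Agbeko proposition applied to the countable union. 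Finally, since each $H_n$ is a $\nu$-atom, $\nu(B \cap H_n) \in \{0, \nu(H_n)\}$, so $\nu(B)$ always lies in the at most countable set $\{0\} \cup \{\nu(H_n) : n \in N\}$; this proves the last sentence.

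The main obstacle I anticipate is the ``carried by atoms'' lemma --- i.e.\ producing, inside an arbitrary $B$ of positive measure, a genuine $\nu$-atom of the \emph{same} value $\nu(B)$. Naively iterating the splitting can fail to terminate, and one must combine continuity from above (to handle decreasing sequences) with a Zorn's lemma argument on a carefully chosen poset of candidate sets; getting the ordering right so that maximal elements are forced to be atoms is the delicate point. Once that lemma is in hand, everything else is the bookkeeping sketched above, and the finiteness-of-large-atoms argument is exactly where the ``$\max$ attained'' conclusion of the preceding proposition does the real work.
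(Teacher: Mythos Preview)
The paper does not supply its own proof of this theorem: it attributes the result to Agbeko (via Zorn's lemma) and to Fazekas (who later gave an elementary argument), and simply quotes the statement. Your proposal is squarely in the spirit of Agbeko's original Zorn-based approach, so at the level of strategy there is nothing to compare.

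That said, there is a genuine gap in your countability step. You argue that if infinitely many pairwise disjoint atoms $H_1, H_2, \ldots$ all satisfy $\nu(H_i) > 1/k$, then the supremum $\bigoplus_i \nu(H_i)$ cannot be attained. This is false: take, for instance, $\nu(H_i) = 1$ for every $i$; the supremum equals $1$ and is attained at every index, so the Murofushi--Sugeno--Agbeko proposition is not contradicted. The correct way to extract a contradiction is to use continuity from above (or, equivalently, exhaustivity) directly: set $B_n = \bigcup_{i \geqslant n} H_i$; then $B_1 \supset B_2 \supset \cdots$ and $\bigcap_n B_n = \emptyset$ because the $H_i$ are pairwise disjoint, yet $\nu(B_n) \geqslant \nu(H_n) > 1/k$ for all $n$, contradicting $\nu(B_n) \to \nu(\emptyset) = 0$. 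With this fix in place the rest of your bookkeeping goes through. A minor related point: your Zorn step is phrased over ``finite or countable'' families, which makes the upper-bound condition for chains circular; either run Zorn over \emph{all} families of pairwise disjoint atoms and then invoke the (now corrected) countability argument a posteriori, or first prove that every such family is automatically countable and then note that chains are closed under union within the countable regime.

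Finally, note that Fazekas's contribution was precisely to remove the appeal to Zorn's lemma; your sketch invokes Zorn twice (once for the ``carried by atoms'' lemma, once for the maximal disjoint family), so if you want the sharper elementary version you would need a different mechanism for producing the atoms.
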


A consequence of this theorem is that every optimal measure takes an at most countable number of values. 

An optimal measure $\nu$ satisfies the \textit{exhaustivity} property, according to the terminology used by Pap \cite{Pap95}, 
 i.e.\ $\nu(B_n) \rightarrow 0$ when $n \rightarrow \infty$ for all pairwise disjoint $B_1, B_2, \ldots \in \mathrsfs{B}$. In fact, exhaustivity is exactly what a $\sigma$-maxitive measure needs to be optimal: 

\begin{proposition}\label{prop:exhaus}
A $\sigma$-maxitive measure is optimal if and only if it is exhaustive. 
\end{proposition}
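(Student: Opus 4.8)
The plan is to prove the equivalence in two directions, the forward direction being essentially trivial and the backward direction requiring the structure theorem for optimal measures only in spirit, though a direct argument may be cleaner.

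First I would handle the easy implication: if $\nu$ is optimal, then it is in particular continuous from above, and continuity from above readily gives exhaustivity. Indeed, given pairwise disjoint $B_1, B_2, \ldots \in \mathrsfs{B}$, set $C_n = \bigcup_{k \geqslant n} B_k$. Then $C_1 \supset C_2 \supset \ldots$ with $\bigcap_n C_n = \emptyset$, so continuity from above yields $\nu(C_n) \to \nu(\emptyset) = 0$. Since $\nu$ is monotone and $B_n \subset C_n$, we get $\nu(B_n) \leqslant \nu(C_n) \to 0$, hence $\nu(B_n) \to 0$. This uses nothing beyond the definitions.

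For the converse, suppose $\nu$ is a $\sigma$-maxitive (hence monotone, null-additive, continuous from below) measure that is exhaustive; I must show it is continuous from above, i.e.\ optimal. Take $B_1 \supset B_2 \supset \ldots$ in $\mathrsfs{B}$ with $B = \bigcap_n B_n$; I want $\nu(B_n) \to \nu(B)$. Since $\nu(B_n)$ is nonincreasing and bounded below by $\nu(B)$, the limit $\ell := \lim_n \nu(B_n)$ exists and $\ell \geqslant \nu(B)$; the task is to show $\ell \leqslant \nu(B)$. Here the plan is to exploit maxitivity together with exhaustivity applied to the ``difference annuli'' $D_n := B_n \setminus B_{n+1}$, which are pairwise disjoint. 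For each $n$, $B_n = B \cup \bigcup_{k \geqslant n} D_k$, so by $\sigma$-maxitivity $\nu(B_n) = \nu(B) \oplus \bigoplus_{k \geqslant n} \nu(D_k)$. Exhaustivity forces $\nu(D_k) \to 0$, but that alone does not immediately bound the supremum $\bigoplus_{k \geqslant n}\nu(D_k)$ away from a positive number unless the tail suprema also vanish — and this is exactly the subtle point: I would argue that if $\sup_{k \geqslant n}\nu(D_k)$ did not tend to $0$, one could extract a subsequence of the $D_k$ along which $\nu$ stays bounded below by some $\varepsilon > 0$, contradicting exhaustivity (a subsequence of a pairwise disjoint sequence is still pairwise disjoint). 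Hence $\bigoplus_{k \geqslant n}\nu(D_k) \to 0$, giving $\nu(B_n) \to \nu(B) \oplus 0 = \nu(B)$, as desired. Finally, I would note that $\nu$ is monotone and null-additive by virtue of being maxitive (Proposition on alternating/monotone properties, or directly), so $\nu$ is a fuzzy measure, hence an optimal measure.

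The main obstacle is the converse direction, specifically the passage from ``$\nu(D_k) \to 0$'' (pointwise exhaustivity) to ``$\sup_{k \geqslant n}\nu(D_k) \to 0$'' (uniform control of the tail), which is where one genuinely uses that \emph{every} subsequence of a disjoint sequence is again disjoint, so exhaustivity is preserved under passing to subsequences. Alternatively — and this might be the slicker route — one could simply invoke the Agbeko--Fazekas decomposition (Theorem~\ref{thmagbeko}) is not available without already knowing $\nu$ is optimal, so the subsequence argument above seems the honest path; a second alternative is to reduce to the case $\nu(B)=0$ by passing to $B_n \setminus B$ and use null-additivity, which slightly streamlines the bookkeeping. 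Either way the crux is the exhaustivity-to-uniformity step.
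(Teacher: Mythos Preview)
Your proof is correct and, since the paper leaves the proof to the reader, there is nothing to compare against; your argument via the disjoint annuli $D_n = B_n \setminus B_{n+1}$ and $\sigma$-maxitivity is exactly the natural one.

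One remark on your commentary, though: you misidentify the ``main obstacle.'' The passage from $\nu(D_k) \to 0$ to $\bigoplus_{k \geqslant n}\nu(D_k) \to 0$ is not subtle at all --- for any nonnegative sequence $(a_k)$, the statement $a_k \to 0$ is \emph{equivalent} to $\sup_{k \geqslant n} a_k \to 0$ (this is just the definition of convergence). No subsequence extraction is needed; once exhaustivity gives $\nu(D_k) \to 0$, the tail suprema vanish automatically and the identity $\nu(B_n) = \nu(B) \oplus \bigoplus_{k \geqslant n}\nu(D_k)$ finishes the job immediately. The proof is genuinely easy, as the paper indicates.
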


\begin{proof}
The easy proof is left to the reader.
\end{proof}

Optimal measures were also studied (under various names) by Rie\v{c}a\-nov\'{a} \cite{Riecanova84}, Murofushi and Sugeno \cite{Murofushi93}, Arslanov and Ismail \cite{Arslanov04}. In particular, the last-mentioned authors proved that the cardinality of some nonempty set $E$ is non-measurable\footnote{A cardinal $|E|$ is \textit{measurable} if there exists a two-valued probability measure on $2^E$ making all singletons negligible. The existence of measurable cardinals remains an open question. } 
if and only if all optimal measures on $2^E$ have a cardinal density \cite[Theorem~19]{Arslanov04}. 
In \cite{Poncet12b} we studied $L$-valued optimal measures defined on the Borel algebra of a topological space, where $L$ is a partially ordered set. 

In Section~\ref{secRN} we introduced \textit{semi-$\odot$-finiteness} for maxitive measures. For optimal measures, this merely reduces to $\odot$-finiteness. 

\begin{proposition}
An optimal measure is semi-$\odot$-finite if and only if it is $\odot$-finite. 
\end{proposition}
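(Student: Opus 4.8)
The plan is to prove the nontrivial direction — that semi-$\odot$-finiteness implies $\odot$-finiteness — using the structural decomposition of optimal measures provided by the Agbeko--Fazekas theorem (Theorem~\ref{thmagbeko}). The converse is immediate: if $\nu(E) \ll_{\odot} \infty$, then for every $B \in \mathrsfs{B}$ we have $\nu(B) \leqslant \nu(E) \ll_{\odot} \infty$, so that $B$ itself belongs to the family $\{ A \in \mathrsfs{B} : A \subset B, \nu(A) \ll_{\odot} \infty \}$ over which the supremum in the definition of semi-$\odot$-finiteness is taken; hence the defining identity $\nu(B) = \bigoplus_{A \subset B} \nu(A)$ holds trivially. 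So the work is entirely in the forward direction.

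Assume $\nu$ is an optimal measure that is semi-$\odot$-finite. By Theorem~\ref{thmagbeko} there is an at most countable family $(H_n)_{n \in N}$ of pairwise disjoint $\nu$-atoms with $\nu(B) = \max_{n \in N} \nu(B \cap H_n)$ for all $B \in \mathrsfs{B}$; in particular $\nu(E) = \max_{n \in N} \nu(H_n)$, and this supremum is attained. First I would observe that for each atom $H_n$ the value $\nu(H_n)$ must be $\odot$-finite: applying semi-$\odot$-finiteness to $B = H_n$ gives $\nu(H_n) = \bigoplus \{ \nu(A) : A \in \mathrsfs{B}, A \subset H_n, \nu(A) \ll_{\odot} \infty \}$, and since $H_n$ is a $\nu$-atom every such $A$ satisfies either $\nu(A) = 0$ or $\nu(H_n \setminus A) = 0$; in the latter case null-additivity of the maxitive measure $\nu$ gives $\nu(A) = \nu(H_n)$. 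Hence every term in the supremum is either $0$ or equal to $\nu(H_n)$, so the supremum equals $\nu(H_n)$ only if some admissible $A$ has $\nu(A) = \nu(H_n) \ll_{\odot} \infty$ (unless $\nu(H_n)=0$, which is excluded for an atom). Therefore $\nu(H_n) \ll_{\odot} \infty$ for every $n \in N$.

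It remains to deduce $\nu(E) \ll_{\odot} \infty$ from the fact that $\nu(E) = \max_{n} \nu(H_n)$ with each $\nu(H_n)$ $\odot$-finite. Here I would use that the maximum is \emph{attained} (the crucial word ``$\max$'' in Theorem~\ref{thmagbeko}): there is some $n_0 \in N$ with $\nu(E) = \nu(H_{n_0}) \ll_{\odot} \infty$. That closes the argument. The main obstacle — and the reason the Agbeko--Fazekas theorem is needed rather than a direct manipulation — is precisely this last step: for a general (non-optimal) $\sigma$-maxitive measure, $\nu(E)$ is only a countable supremum $\bigoplus_n \nu(H_n)$ of $\odot$-finite values, and such a supremum need not be $\odot$-finite (the set of $\odot$-finite elements of $\overline{\mathbb{R}}_+$ need not be closed under countable suprema, e.g.\ for the usual multiplication where it is $\{0\}$, or more relevantly for a non-degenerate $\odot$ where it can be a bounded interval). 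The optimality hypothesis removes this difficulty by forcing the supremum to be achieved at a single atom, reducing $\sigma$-$\odot$-finiteness-type phenomena to plain $\odot$-finiteness. One should double-check the edge case $N = \emptyset$ (i.e.\ $\nu \equiv 0$), where $\nu(E) = 0 \ll_{\odot} \infty$ trivially, and the case where some $H_n$ has $\nu(H_n) = \infty$, which the atom argument excludes once semi-$\odot$-finiteness is invoked.
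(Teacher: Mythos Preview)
Your proof is correct. The route differs from the paper's, though both ultimately rest on the Agbeko--Fazekas structure theorem. You work atom by atom: you apply semi-$\odot$-finiteness to each $H_n$, use the dichotomy for subsets of an atom to force $\nu(H_n) \ll_{\odot} \infty$, and then exploit that the $\max$ in the decomposition $\nu(E) = \max_n \nu(H_n)$ is attained. The paper instead works with the range of $\nu$: it observes (via a remark of Fazekas) that for an optimal measure the set of values $\{\nu(B) : \nu(B) > s\}$ is finite for every $s > 0$, so the supremum $\nu(E) = \sup\{\nu(B) : \nu(B) \ll_{\odot} \infty\}$ can be restricted to a finite set of $\odot$-finite values and is therefore an attained maximum. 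Your approach is more self-contained (no appeal to the external Fazekas remark), at the cost of the extra step analysing subsets of each atom; the paper's is shorter but imports a fact whose proof is essentially the exhaustivity of optimal measures combined with the atom decomposition you already invoked.
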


\begin{proof}
Let $\nu$ be an optimal measure on $\mathrsfs{B}$. 
If $\nu$ is $\odot$-finite, it is clearly semi-$\odot$-finite. 
Conversely, suppose that $\nu$ is semi-$\odot$-finite. 
If $\nu(E) = 0$, then $\nu$ is $\odot$-finite. 
Otherwise, let $0 < s < \nu(E)$. 
By semi-$\odot$-finiteness, $\nu(E)$ is the supremum of $\{ \nu(B) : B \in \mathrsfs{B}, \nu(B) \ll_{\odot} \infty \}$, so $\nu(E)$ is also the supremum of $\{ \nu(B) : B \in \mathrsfs{B}, s < \nu(B) \ll_{\odot} \infty \}$. 
In view of Fazekas \cite[Remark~5]{Fazekas97}, the latter subset is finite, so its supremum is a maximum. 
This shows in particular that $\nu(E) \ll_{\odot} \infty$, i.e.\ that $\nu$ is $\odot$-finite. 
\end{proof}









\subsection{Densities of optimal measures}

In this paragraph, we use previous results on the existence of densities for $\sigma$-maxitive measures, and apply them to optimal measures. 



Agbeko proved Theorem~\ref{sugeno-murofushi} independently of Sugeno and Murofushi \cite{Sugeno87} in the particular case where $\tau$ is a normed optimal measure and $\nu$ is a finite optimal measure on $\mathrsfs{B}$ \cite[Theorem~2.4]{Agbeko95}. This is indeed a particular case thanks to  \cite[Lemma~2.1]{Murofushi93}, which states that every optimal measure is CCC, hence $\sigma$-principal under Zorn's lemma. Below we show without Zorn's lemma that every optimal measure is $\sigma$-principal (hence CCC by \cite[Proposition~4.1]{Poncet13e}). We actually show the stronger result that every optimal measure is essential. 

\begin{proposition}\label{prop:optsig}
Every optimal measure is essential (hence $\sigma$-principal, hence CCC and autocontinuous). 
\end{proposition}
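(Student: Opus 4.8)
The plan is to prove essentiality of an arbitrary optimal measure $\nu$ on $\mathrsfs{B}$ directly from the Agbeko--Fazekas atomic decomposition (Theorem~\ref{thmagbeko}), and then to read off the bracketed properties from facts already recorded. Recall that $\nu$ is \emph{essential} precisely when there is a $\sigma$-finite $\sigma$-additive measure $m$ on $\mathrsfs{B}$ with $\nu(B)>0$ if and only if $m(B)>0$, for every $B\in\mathrsfs{B}$; so the whole task is to exhibit such an $m$. Note that one cannot simply invoke Proposition~\ref{prop:essential}, since an optimal measure need not be of bounded variation.

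By Theorem~\ref{thmagbeko} there is an at most countable family $(H_n)_{n\in N}$ of pairwise disjoint $\nu$-atoms, which we may index by $N\subseteq\mathbb{N}$, such that $\nu(B)=\max_{n\in N}\nu(B\cap H_n)$ for all $B\in\mathrsfs{B}$. For each $n\in N$ I would introduce the $\{0,1\}$-valued set function $m_n$ on $\mathrsfs{B}$ defined by $m_n(B)=1$ if $\nu(H_n\cap B)>0$ and $m_n(B)=0$ otherwise; the crux is that each $m_n$ is a countably additive probability measure. Monotonicity, $m_n(\emptyset)=0$ and $m_n(E)=1$ are clear (the last because $\nu(H_n)>0$). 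For finite additivity, take disjoint $B,B'\in\mathrsfs{B}$; maxitivity of $\nu$ gives $\nu(H_n\cap(B\cup B'))=\nu(H_n\cap B)\oplus\nu(H_n\cap B')$, so $m_n(B\cup B')=\max(m_n(B),m_n(B'))$, and it remains only to check that $m_n(B)$ and $m_n(B')$ cannot both equal $1$: if $\nu(H_n\cap B)>0$, the atom property applied to $B$ forces $\nu(H_n\setminus B)=0$, and since $H_n\cap B'\subseteq H_n\setminus B$ we obtain $\nu(H_n\cap B')=0$.

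The step I expect to require the most care is the countable additivity of $m_n$, since a $\{0,1\}$-valued finitely additive probability is in general merely an ultrafilter and need not be $\sigma$-additive. This is exactly where $\sigma$-maxitivity of $\nu$ (valid because an optimal measure is continuous from below) is used: for pairwise disjoint $(B_k)_k$, the atom argument above shows at most one $m_n(B_k)$ equals $1$; if they all vanish, then $\nu(H_n\cap B_k)=0$ for each $k$, hence $\nu\bigl(\bigcup_k(H_n\cap B_k)\bigr)=\bigoplus_k\nu(H_n\cap B_k)=0$ and $m_n(\bigcup_k B_k)=0$; and if $m_n(B_{k_0})=1$ for some $k_0$, then $\nu\bigl(H_n\cap\bigcup_k B_k\bigr)\geqslant\nu(H_n\cap B_{k_0})>0$, so $m_n(\bigcup_k B_k)=1$. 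Either way $m_n(\bigcup_k B_k)=\sum_k m_n(B_k)$.

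Finally I would set $m=\sum_{n\in N}2^{-n}m_n$, a countable sum of measures of total mass at most $2$, hence a finite, a fortiori $\sigma$-finite, $\sigma$-additive measure on $\mathrsfs{B}$. For $B\in\mathrsfs{B}$ we have $m(B)>0$ iff $m_n(B)=1$ for some $n$, iff $\nu(H_n\cap B)>0$ for some $n$, iff $\max_{n\in N}\nu(B\cap H_n)>0$, iff $\nu(B)>0$, by Theorem~\ref{thmagbeko}; thus $\nu$ is essential. It then remains only to note that the bracketed consequences require no new work: an essential $\sigma$-maxitive measure is $\sigma$-principal (Theorem~\ref{implications}), and $\sigma$-principality in turn yields the countable chain condition and autocontinuity (Corollary~\ref{coro:sm}).
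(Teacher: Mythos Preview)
Your proof is correct and follows essentially the same route as the paper's: both invoke the Agbeko--Fazekas decomposition and exploit the atom property to show that, on each $H_n$, the trace of $\nu$ behaves two-valuedly (either $\nu(H_n\cap B)=0$ or $\nu(H_n\setminus B)=0$), which is exactly what makes the associated set function additive on disjoint sets. The only cosmetic difference is in the weights: the paper first reduces to $\nu$ finite and takes $m(B)=\sum_n\nu(B\cap H_n)$, whereas you bypass that reduction by normalising each atomic piece to a $\{0,1\}$-valued probability $m_n$ and summing with weights $2^{-n}$. Since $\nu(B\cap H_n)\in\{0,\nu(H_n)\}$ by the atom property, the paper's $m$ is just $\sum_n\nu(H_n)\,m_n$, so the two constructions coincide up to the choice of coefficients; your version has the mild advantage of yielding a finite $m$ directly. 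One small citation quibble: Corollary~\ref{coro:sm} gives autocontinuity but not CCC; for the latter the paper appeals to \cite[Proposition~4.1]{Poncet13e} (or to the implication recorded in Table~\ref{fig:Schema1Chapitre1}).
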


\begin{proof}
Let $\nu$ be an optimal measure on a $\sigma$-algebra $\mathrsfs{B}$, and let $(H_n)_{n \in N}$ be a collection satisfying the conditions of the Agbeko-Fazekas Theorem (Theorem~\ref{thmagbeko}).  
We can suppose, without loss of generality, that $\nu$ is finite. 
We define $m$ on $\mathrsfs{B}$ by 
\[
m(B) = \sum_n \nu(B \cap H_n).  
\]
Then one can show that $m$ is a $\sigma$-finite, $\sigma$-additive measure on $\mathrsfs{B}$ such that $m(B) > 0$ if and only if $\nu(B) > 0$. 
What makes $m$ additive is that $\nu((B \cup B') \cap H_n) = \nu(B \cap H_n) + \nu(B' \cap H_n)$ whenever $B \cap B' = \emptyset$. This is because, if $B \cap B' = \emptyset$, then $\nu(B \cap H_n) > 0$ implies $\nu(B' \cap H_n) = 0$, since $\nu(H_n) = \nu(H_n \setminus (B \cap B')) = \nu(H_n \setminus B) \oplus \nu(H_n \setminus B') = \nu(H_n \setminus B') > 0$. 
\end{proof}

However, an optimal measure is not of bounded variation in general, as the next proposition shows. 
Recall that $| \nu |$ denotes the supremum of $\{\sum_{B \in \pi} \nu(B) : \pi \mbox{ is a finite $\mathrsfs{B}$-partition of $E$ } \}$. 

\begin{proposition}
For every optimal measure $\nu$ we have $| \nu | = \sum_n \nu(H_n)$, where $(H_n)_{n \in N}$ is a collection satisfying the conditions of Theorem~\ref{thmagbeko}. 
In particular, $\nu$ is of bounded variation if and only if $\sum_n \nu(H_n) < \infty$. 
\end{proposition}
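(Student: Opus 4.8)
The plan is to establish the two inequalities $|\nu|\geqslant\sum_n\nu(H_n)$ and $|\nu|\leqslant\sum_n\nu(H_n)$ separately, where $(H_n)_{n\in N}$ is the collection of pairwise disjoint $\nu$-atoms furnished by Theorem~\ref{thmagbeko}. The first inequality is essentially immediate; the second rests on the atomic structure encoded in Equation~\eqref{agbekofazekas}.

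For the lower bound I would fix an arbitrary finite subset $F$ of $N$ and consider the finite $\mathrsfs{B}$-partition
\[
\pi_F = \{ H_n : n \in F \} \cup \{ E \setminus \textstyle\bigcup_{n\in F} H_n \}
\]
of $E$. Since $\nu$ takes nonnegative values, $\sum_{B\in\pi_F}\nu(B)\geqslant\sum_{n\in F}\nu(H_n)$, hence $|\nu|\geqslant\sum_{n\in F}\nu(H_n)$; letting $F$ exhaust $N$ gives $|\nu|\geqslant\sum_{n\in N}\nu(H_n)$.

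For the upper bound I would take an arbitrary finite $\mathrsfs{B}$-partition $\pi=\{B_1,\ldots,B_k\}$ of $E$ and bound $\sum_{i=1}^k\nu(B_i)$. By Equation~\eqref{agbekofazekas}, together with the fact that the supremum there is attained, for every index $i$ with $\nu(B_i)>0$ I can choose $n(i)\in N$ with $\nu(B_i)=\nu(B_i\cap H_{n(i)})$. The crucial step, and the one I expect to be the main obstacle, is to show that $i\mapsto n(i)$ is injective on $\{ i : \nu(B_i)>0 \}$. This follows from the argument already used in the proof of Proposition~\ref{prop:optsig}: if $B,B'\in\mathrsfs{B}$ are disjoint, then $B\cap H_n\subseteq H_n\setminus B'$, so $\nu(B\cap H_n)>0$ forces $\nu(H_n\setminus B')>0$, and the $\nu$-atom property of $H_n$ then yields $\nu(H_n\cap B')=0$, i.e.\ $\nu(B'\cap H_n)=0$. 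Applied to two distinct (hence disjoint) blocks $B_i,B_j$ of $\pi$, this rules out $n(i)=n(j)$. Granting injectivity, monotonicity gives
\[
\sum_{i=1}^k \nu(B_i) = \sum_{i:\,\nu(B_i)>0} \nu(B_i\cap H_{n(i)}) \leqslant \sum_{i:\,\nu(B_i)>0} \nu(H_{n(i)}) \leqslant \sum_{n\in N} \nu(H_n),
\]
and taking the supremum over $\pi$ yields $|\nu|\leqslant\sum_{n\in N}\nu(H_n)$.

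Combining the two inequalities proves $|\nu|=\sum_n\nu(H_n)$. The final clause is then simply the observation that $|\nu|<\infty$ if and only if $\sum_n\nu(H_n)<\infty$.
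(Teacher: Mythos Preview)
Your proof is correct and follows essentially the same approach as the paper's: both establish the lower bound by plugging in the finite partitions built from the atoms $H_n$, and both establish the upper bound by using the Agbeko--Fazekas representation to assign to each non-null block of an arbitrary partition an atom, then showing this assignment is injective via the $\nu$-atom property. Your phrasing of the injectivity argument (via $B\cap H_n\subseteq H_n\setminus B'$ and the dichotomy for atoms) is a mild variant of the paper's (which argues $\nu(H)=\nu(H\setminus B_k)\oplus\nu(H\setminus B_{k'})=0$), but the content is identical.
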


\begin{proof}
Let $\nu$ be an optimal measure on a $\sigma$-algebra $\mathrsfs{B}$, and let $(H_n)_{n \in N}$ be a collection satisfying the conditions of the Agbeko-Fazekas Theorem (Theorem~\ref{thmagbeko}).  

Recall that $| \nu |$ is defined as 
$
| \nu | = \sup_{\pi} \sum_{B \in \pi} \nu(B)
$, 
where the supremum is taken over the set of finite $\mathrsfs{B}$-partitions $\pi$ of $E$. 
Let $\pi_n$ denote the finite $\mathrsfs{B}$-partition $\{ H_1, \ldots, H_n, E \setminus \bigcup_{k=1}^n H_k \}$. 
Then $\sum_{k=1}^n \nu(H_k) \leqslant \sum_{k=1}^n \nu(H_k) + \nu(\bigcap_{k=1}^n E \setminus H_k) \leqslant | \nu |$, so that $\sum_{k=1}^{\infty} \nu(H_k) \leqslant | \nu |$. 

Conversely, let $\{ B_1, \ldots, B_n \}$ be a finite $\mathrsfs{B}$-partition of $E$. We can suppose without loss of generality that $\nu(B_k) > 0$ for all $1 \leqslant k \leqslant n$. By the Agbeko--Fazekas Theorem, for every $k = 1, \ldots, n$ there exists some $n_k$ such that $0 < \nu(B_k) = \nu(B_k \cap H_{n_k}) \leqslant \nu(H_{n_k})$. Moreover, $k \neq k'$ implies $n_k \neq n_{k'}$, because if $H := H_{n_k} = H_{n_{k'}}$ and $k \neq k'$, then $B_k \cap B_{k'} = \emptyset$, so $\nu(H) = \nu(H \setminus (B_k \cap B_{k'})) = \nu(H \setminus B_k) \oplus \nu(H \setminus B_{k'}) = 0$, a contradiction. 
Consequently, $\sum_{k=1}^n \nu(B_k) \leqslant \sum_{k=1}^n \nu(H_{n_k}) \leqslant \sum_{k=1}^{\infty} \nu(H_k)$, so that $| \nu | \leqslant \sum_{k=1}^{\infty} \nu(H_k)$. 
\end{proof}


As a consequence of Proposition~\ref{prop:optsig}, we derive the Radon--Nikodym like theorem for optimal measures due to Agbeko. 

\begin{corollary}[Agbeko]
Let $\nu$, $\tau$ be $\sigma$-maxitive measures on $\mathrsfs{B}$. Assume that $\tau$ is $\odot$-finite and optimal. Then $\nu \ll_{\odot} \tau$ if and only if there exists some $\mathrsfs{B}$-measurable map $c : E \rightarrow \overline{\mathbb{R}}_+$ such that  
\[
\nu(B) = \dint{B} c \odot d\tau, 
\]
for all $B\in \mathrsfs{B}$. If these conditions are satisfied, then $c$ is unique $\tau$-almost everywhere. 
\end{corollary}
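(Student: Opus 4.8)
The plan is to deduce this corollary directly from the Sugeno--Murofushi theorem (Theorem~\ref{sugeno-murofushi}) by verifying that an $\odot$-finite optimal measure $\tau$ satisfies the hypotheses of that theorem, namely $\sigma$-$\odot$-finiteness and $\sigma$-principality. The $\sigma$-$\odot$-finiteness is immediate: if $\tau$ is $\odot$-finite, then $\tau(E) \ll_{\odot} \infty$, so the trivial cover $\{E\}$ witnesses $\sigma$-$\odot$-finiteness. For $\sigma$-principality, I would simply invoke Proposition~\ref{prop:optsig}, which asserts that every optimal measure is essential, hence $\sigma$-principal; this is precisely the content already established via the Agbeko--Fazekas decomposition. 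Thus both standing assumptions on $\tau$ in Theorem~\ref{sugeno-murofushi} hold.

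Having reduced to Theorem~\ref{sugeno-murofushi}, the equivalence ``$\nu \ll_{\odot} \tau$ if and only if $\nu$ has a $\mathrsfs{B}$-measurable density $c$ with $\nu(B) = \dint{B} c \odot d\tau$ for all $B \in \mathrsfs{B}$'' is exactly the conclusion of that theorem, and the $\tau$-almost everywhere uniqueness of $c$ is likewise part of its statement. So the proof is essentially a two-line citation: first reduce $\odot$-finiteness to $\sigma$-$\odot$-finiteness, then quote Proposition~\ref{prop:optsig} for $\sigma$-principality, then apply Theorem~\ref{sugeno-murofushi}.

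The only subtlety — and it is minor — is bookkeeping around the reduction to the finite case inside Proposition~\ref{prop:optsig}, which is invoked as a black box here; one should double-check that the essentiality conclusion there does not secretly require $\nu$ (the dominated measure) to be well-behaved, but it does not: essentiality is a property of $\tau$ alone, produced from its own Agbeko--Fazekas atoms. There is no real obstacle: the entire content was already packaged in the preceding propositions, and the corollary is a clean specialization of the Sugeno--Murofushi framework to optimal dominating measures, which is exactly why it recovers Agbeko's original Radon--Nikodym theorem.

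\begin{proof}
Since $\tau$ is $\odot$-finite, we have $\tau(E) \ll_{\odot} \infty$, so $\tau$ is $\sigma$-$\odot$-finite (the cover $\{E\}$ suffices). By Proposition~\ref{prop:optsig}, $\tau$ is essential, hence $\sigma$-principal. Therefore $\tau$ satisfies the hypotheses of Theorem~\ref{sugeno-murofushi}, and the stated equivalence, together with the $\tau$-almost everywhere uniqueness of $c$, follows at once.
\end{proof}
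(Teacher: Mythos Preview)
Your proof is correct and follows the same approach as the paper, which simply says to combine Theorem~\ref{sugeno-murofushi} and Proposition~\ref{prop:optsig}. You have merely spelled out explicitly the trivial verification that $\odot$-finiteness implies $\sigma$-$\odot$-finiteness, which the paper leaves implicit.
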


\begin{proof}
Combine Theorem~\ref{sugeno-murofushi} and Proposition~\ref{prop:optsig}, or use Agbeko \cite[Theorem~2.4]{Agbeko95} for the original statement. 
\end{proof}

\begin{problem}
Characterize those $\sigma$-maxitive measures $\tau$ that satisfy the \textit{optimal} Radon--Nikodym property, i.e.\ such that all optimal measures that are $\odot$-absolutely continuous with respect to $\tau$, have a measurable density with respect to $\tau$. 
\end{problem}





\section{A novel definition for possibility measures}\label{sec:poss}


\subsection{Towards an appropriate definition of possibility measures}

 

Possibility theory can be treated as an analogue of probability theory, where probability measures are replaced by their maxitive counterpart. 
This point of view has been developed over the last few years by several authors including Bellalouna \cite{Bellalouna92}, Akian, Quadrat, and Viot \cite{Akian94b, Akian98}, Akian \cite{Akian95}, Del Moral and Doisy \cite{delMoral98}, de Cooman \cite{deCooman95b, deCooman97, deCooman97b, deCooman97c}, Puhalskii \cite{Puhalskii01}, Barron, Cardaliaguet, and Jensen \cite{Barron03}, Fleming \cite{Fleming04} among others. See also Baccelli et al.\ \cite{Baccelli92}. 
Analogies with probability theory, especially stressed by de Cooman \cite{deCooman95b} and Akian et al.\ \cite{Akian98}, arise in the definitional aspects (such as the notion of independent events, or the concept of \textit{maxingale} which replaces that of martingale \cite{Puhalskii01, Barron03}) as well as in important results  such as the law of large numbers or the central limit theorem. Nonetheless, possibility theory has its own specificities, for instance the surprising fact that convergence in ``possibility'' implies almost sure convergence\footnote{Recall that probabilists are familiar with the converse implication. } (see \cite[Proposition~28]{Akian95} and \cite[Theorem~1.3.5]{Puhalskii01}). 

In a stochastic context, the Radon--Nikodym property is highly desirable if one wants to dispose of conditional laws. 
In the $\sigma$-additive case this property is achieved by the classical Radon--Nikodym theorem\footnote{Notice that every probability measure is  $\sigma$-principal, see Theorem~\ref{implications} in the Appendix.}, but in the $\sigma$-maxitive case this property may fail in absence of the $\sigma$-principality condition. 
To overcome this drawback, most of the publications require the possibility measure under study $\mathit{\Pi}$ to be completely maxitive, i.e.\ to have a cardinal density, thus to be of the form  
\begin{equation}\label{eq:possdens}
\mathit{\Pi}[A] = \bigoplus_{\omega \in A} c(\omega). 
\end{equation}
This condition was imposed by Akian et al.\ \cite{Akian94b, Akian98}, Akian \cite{Akian95}, Del Moral and Doisy \cite{delMoral98}, de Cooman \cite{deCooman95b, deCooman97, deCooman97b, deCooman97c}, Puhalskii \cite{Puhalskii01}, Fleming \cite{Fleming04}. 
Hypothesis~\eqref{eq:possdens} then facilitates the definition of conditioning, for $\mathit{\Pi}[X|Y]$ can be defined by the data of its cardinal density $c_{X|Y}$ given by: 
\[
c_{X|Y}(x|y) = \frac{c_{(X, Y)}(x, y)}{c_Y(y)}, 
\]
if $c_Y(y) > 0$, and $c_{X|Y}(x|y) = 0$ otherwise, where $c_X$ and $c_Y$ are the respective (maximal) cardinal densities of $\mathit{\Pi}_X := \mathit{\Pi} \circ X^{-1}$ and $\mathit{\Pi}_Y$, and $c_{(X,Y)}$ that of the random variable $(X,Y) : \mathit{\Omega} \times \mathit{\Omega} \rightarrow \mathbb{R}_+$. 
In \cite{deCooman01} and \cite{Puhalskii01}, another restrictive hypothesis was adopted, for their authors only considered completely maxitive measures defined on $\tau$-algebras. A $\tau$-algebra $\mathrsfs{A}$ on $\mathit{\Omega}$ being atomic, every $\omega \in \mathit{\Omega}$ is contained in a smallest event, denoted by $[\omega]_{\mathrsfs{A}}$. This particularity enables one to give an explicit formula of conditional laws, $\omega$ by $\omega$. 

The assumption of complete maxitivity and the use of $\tau$-algebras instead of $\sigma$-algebras, if easier to handle, are not satisfactory in the situation where one wants to parallel probability theory. A different framework is possible, and we suggest to adopt the following definition of a possibility measure. 



\begin{definition}\label{def:poss}
Let $(\mathit{\Omega}, \mathrsfs{A})$ be a measurable space. 
A \textit{possibility measure} (or a \textit{possibility} for short) on $(\mathit{\Omega}, \mathrsfs{A})$ is a $\sigma$-principal $\sigma$-maxitive measure $\mathit{\Pi}$ on $\mathrsfs{A}$ such that $\mathit{\Pi}[\mathit{\Omega}]= 1$. Then $(\mathit{\Omega}, \mathrsfs{A}, \mathit{\Pi})$ is called a \textit{possibility space}.
\end{definition}

\subsection{Conditional law with respect to a possibility measure}

A conjunction of factors tends to confirm that this is the right definition. 
Firstly, properties of $\mathit{\Pi}$ are transferred to the ``laws'' of random variables. 
If $(E, \mathrsfs{B})$ is a measurable space and $X : \mathit{\Omega} \rightarrow E$ is a random variable, its (possibility) law $\mathit{\Pi}_X$ on $\mathrsfs{B}$ is the possibility measure defined by $\mathit{\Pi}_X(B) = \mathit{\Pi}[X \in B] := \mathit{\Pi}[X^{-1}(B)]$. Moreover, if $\mathit{\Pi}$ is optimal (resp.\ completely maxitive), then $\mathit{\Pi}_X$ is optimal (resp.\ completely maxitive).

Secondly, the $\sigma$-principality property ensures that the Radon--Nikodym property is satisfied for the idempotent $\odot$-integral  $\mathit{\Sigma}[X] := \ddint X \odot d\mathit{\Pi}$ 
of some random variable $X : \mathit{\Omega} \rightarrow \mathbb{R}_+$. Thus, following the classical approach of Halmos and Savage \cite{Halmos49}, conditioning can be defined as follows. 
Let $X : \mathit{\Omega} \rightarrow \mathbb{R}_+$ be a random variable and $\mathrsfs{F}$ be a sub-$\sigma$-algebra of $\mathrsfs{A}$. 
The  $\sigma$-maxitive measure defined on $\mathrsfs{F}$ by $A \mapsto \mathit{\Sigma}[X \odot 1_A] = \dint{A} X \odot d\mathit{\Pi}$ is absolutely continuous with respect to the possibility $\mathit{\Pi} |_{\mathrsfs{F}}$. Thus, there exists some $\mathrsfs{F}$-measurable random variable from $\mathit{\Omega}$ into $\mathbb{R}_+$, written $\mathit{\Sigma}[X | \mathrsfs{F}]$, 
such that $\mathit{\Sigma}[X \odot 1_A] = \mathit{\Sigma}\left[ \mathit{\Sigma}[X|\mathrsfs{F}] \odot 1_A \right]$ for all $A \in\mathrsfs{F}$.


Barron et al.\ \cite{Barron03} considered the special case $\mathit{\Pi} := \delta_P$, where $P$ is a probability measure. Then $\mathit{\Pi}$ is essential, hence $\sigma$-principal, so it is a possibility measure in the sense of Definition~\ref{def:poss}, and the integral $\mathit{\Sigma}[X]$ of a nonnegative random variable $X$ coincides with the $P$-essential supremum of $X$, i.e.\ $\mathit{\Sigma}[X] = \bigoplus_{\omega \in \mathit{\Omega}}^{P} X(\omega)$. Also, whenever $\mathit{\Sigma}[X] < \infty$, one has $\mathit{\Sigma}[X|\mathrsfs{F}] = \lim_{p \rightarrow \infty} E[X^p | \mathrsfs{F}]^{1/p}$, $P$-almost surely (where $E[X]$ denotes the usual expected value of $X$ with respect to the probability measure $P$), see \cite[Proposition~2.12]{Barron03}. Barron et al.\ derived a number of properties  that still work in our more general context, as asserted by the next result. 





\begin{proposition}
Let $\mathrsfs{F}$ be a sub-$\sigma$-algebra of $\mathrsfs{A}$, and let $X, X', Y : \mathit{\Omega} \rightarrow \mathbb{R}_+$ be nonnegative random variables with $Y$ $\mathrsfs{F}$-measurable. Then the following assertions hold: 
\begin{enumerate}
	\item\label{bcj1} $Y = \mathit{\Sigma}[X | \mathrsfs{F}]$ a.e.\ if and only if
	$
	\mathit{\Sigma}[X \odot Z] = \mathit{\Sigma}[Y \odot Z]
	$ 
	for all nonnegative $\mathrsfs{F}$-measurable random variables $Z$; 
	\item\label{bcj3} if $X \leqslant Y$ a.e., then $\mathit{\Sigma}[X | \mathrsfs{F}] \leqslant Y$ a.e.; 
	\item\label{bcj4} if $\lambda \geqslant 0$, then $\mathit{\Sigma}[X \oplus (\lambda \odot X') |\mathrsfs{F}] = \mathit{\Sigma}[X |\mathrsfs{F}] \oplus (\lambda \odot \mathit{\Sigma}[X' |\mathrsfs{F}])$ a.e.; 
	\item\label{bcj5} $\mathit{\Sigma}\left[ \mathit{\Sigma}[X |\mathrsfs{F}] \right] = \mathit{\Sigma}[X]$; 
	\item\label{bcj6} if $X$ is $\mathrsfs{F}$-measurable then $\mathit{\Sigma}[X | \mathrsfs{F}] = X$ a.e., 
\end{enumerate}
where ``a.e.'' stands for ``$\mathit{\Pi}$-almost everywhere''. 
\end{proposition}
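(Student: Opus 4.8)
The plan is to reduce everything to the defining property of $\mathit{\Sigma}[\,\cdot\,|\mathrsfs{F}]$, namely that for an $\mathrsfs{F}$-measurable $g\geqslant 0$ one has $g=\mathit{\Sigma}[X|\mathrsfs{F}]$ $\mathit{\Pi}$-a.e.\ as soon as $\mathit{\Sigma}[X\odot 1_A]=\mathit{\Sigma}[g\odot 1_A]$ for every $A\in\mathrsfs{F}$, this last implication being exactly the uniqueness, up to $\mathit{\Pi}|_{\mathrsfs{F}}$-a.e.\ equality, of the density of the $\sigma$-maxitive measure $A\mapsto\mathit{\Sigma}[X\odot 1_A]$ on $\mathrsfs{F}$ with respect to $\mathit{\Pi}|_{\mathrsfs{F}}$. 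Two standing remarks will be used throughout: first, since $\mathit{\Pi}$ is null-additive, both $\mathit{\Sigma}[\,\cdot\,]$ and $\mathit{\Sigma}[\,\cdot\,|\mathrsfs{F}]$ are unaffected by modifications of their argument on $\mathit{\Pi}$-negligible sets; second, a $\mathit{\Pi}|_{\mathrsfs{F}}$-negligible set is $\mathit{\Pi}$-negligible, so a statement valid $\mathit{\Pi}|_{\mathrsfs{F}}$-a.e.\ is valid $\mathit{\Pi}$-a.e.

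First I would prove \eqref{bcj1}, which is the heart of the matter. The ``if'' part is immediate from the criterion above (take $Z=1_A$). For the ``only if'' part it suffices, modifications on negligible sets being harmless, to show $\mathit{\Sigma}[X\odot Z]=\mathit{\Sigma}[\mathit{\Sigma}[X|\mathrsfs{F}]\odot Z]$ for every nonnegative $\mathrsfs{F}$-measurable $Z$. One writes $Z$ as the pointwise supremum of a nondecreasing sequence $(Z_n)$ of nonnegative $\mathrsfs{F}$-measurable simple functions; using that $\odot$ distributes over $\oplus$ (over finite joins this is forced by monotonicity, over countable joins it follows from the continuity axioms on $\odot$) one gets $X\odot Z=\bigoplus_n X\odot Z_n$, so by the $\sigma$-maxitivity of the idempotent $\odot$-integral it is enough to treat each $Z_n$; and for a simple $Z_n$ the equality follows from the homogeneity and the finite maxitivity of the integral together with the case $Z=1_A$, which is the defining property of $\mathit{\Sigma}[X|\mathrsfs{F}]$.

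The remaining items are then formal consequences. For \eqref{bcj6}: since $X$ is $\mathrsfs{F}$-measurable, $Y:=X$ trivially satisfies the criterion of \eqref{bcj1}, whence $\mathit{\Sigma}[X|\mathrsfs{F}]=X$ a.e. For \eqref{bcj5}: take $A=\mathit{\Omega}$ in the defining property, so $\mathit{\Sigma}[X]=\mathit{\Sigma}[X\odot 1_{\mathit{\Omega}}]=\mathit{\Sigma}[\mathit{\Sigma}[X|\mathrsfs{F}]\odot 1_{\mathit{\Omega}}]=\mathit{\Sigma}[\mathit{\Sigma}[X|\mathrsfs{F}]]$. For \eqref{bcj4}, with $W:=X\oplus(\lambda\odot X')$, one checks that the $\mathrsfs{F}$-measurable map $g:=\mathit{\Sigma}[X|\mathrsfs{F}]\oplus(\lambda\odot\mathit{\Sigma}[X'|\mathrsfs{F}])$ obeys the defining property of $\mathit{\Sigma}[W|\mathrsfs{F}]$: for $A\in\mathrsfs{F}$, distributivity of $\odot$ over $\oplus$, the finite maxitivity and the homogeneity of the integral give
\[
\mathit{\Sigma}[W\odot 1_A]=\mathit{\Sigma}[X\odot 1_A]\oplus\big(\lambda\odot\mathit{\Sigma}[X'\odot 1_A]\big),
\]
and replacing $\mathit{\Sigma}[X\odot 1_A]$ by $\mathit{\Sigma}[\mathit{\Sigma}[X|\mathrsfs{F}]\odot 1_A]$ and $\mathit{\Sigma}[X'\odot 1_A]$ by $\mathit{\Sigma}[\mathit{\Sigma}[X'|\mathrsfs{F}]\odot 1_A]$ (defining property for $X$ and for $X'$) and then running the same computation backwards yields $\mathit{\Sigma}[g\odot 1_A]$; by uniqueness of the density, $g=\mathit{\Sigma}[W|\mathrsfs{F}]$ a.e. Finally \eqref{bcj3}: if $X\leqslant Y$ a.e.\ with $Y$ $\mathrsfs{F}$-measurable, then $X\oplus Y=Y$ a.e., so by a.e.-invariance, \eqref{bcj4} (with $\lambda=1_{\odot}$ and $X'=Y$) and \eqref{bcj6} one gets $\mathit{\Sigma}[X|\mathrsfs{F}]\oplus Y=\mathit{\Sigma}[X\oplus Y|\mathrsfs{F}]=\mathit{\Sigma}[Y|\mathrsfs{F}]=Y$ a.e., i.e.\ $\mathit{\Sigma}[X|\mathrsfs{F}]\leqslant Y$ a.e.

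I expect the only genuine obstacle to be the extension step in \eqref{bcj1} from indicators to arbitrary nonnegative $\mathrsfs{F}$-measurable $Z$: for the pseudo-multiplications of primary interest --- $\odot=\times$ (Shilkret integral) and $\odot=\wedge$ (Sugeno integral) --- it is entirely routine, and in general it rests on the continuity and distributivity properties of $\odot$ from Paragraph~\ref{par:odot} together with the $\sigma$-maxitivity of the idempotent $\odot$-integral; once this is in place, the other four assertions are purely formal consequences of the defining property of the conditional and of uniqueness of densities.
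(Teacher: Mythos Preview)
Your proof is correct and follows essentially the same route as the paper's. The only cosmetic differences are: for the extension step in \eqref{bcj1} the paper uses the single formula $Z=\bigoplus_{q\in\mathbb{Q}_+} q\odot 1_{\{Z>q\}}$ (a countable supremum handled directly by $\sigma$-maxitivity of the integral) instead of an increasing sequence of simple functions, and for \eqref{bcj4} the paper verifies the criterion of \eqref{bcj1} against arbitrary nonnegative $\mathrsfs{F}$-measurable $Z$ rather than indicators $1_A$; your arguments for \eqref{bcj3}, \eqref{bcj5}, \eqref{bcj6} match the paper's exactly.
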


\begin{proof}
Note that if $X_1 = X_2$ a.e.\ and $X_2 = X_3$ a.e.\ then $X_1 = X_3$ a.e. 

\eqref{bcj1} By definition $\mathit{\Sigma}[X \odot 1_A] = \mathit{\Sigma}\left[ \mathit{\Sigma}[X|\mathrsfs{F}] \odot 1_A \right]$ for all $A \in\mathrsfs{F}$. Since $Z$ equals $\bigoplus_{q \in \mathbb{Q}_+} q \odot 1_{Z > q}$, for every nonnegative $\mathrsfs{F}$-measurable random variable $Z$, we obtain $\mathit{\Sigma}[X \odot Z] = \mathit{\Sigma}\left[ \mathit{\Sigma}[X|\mathrsfs{F}] \odot Z \right]$. So if $Y = \mathit{\Sigma}[X|\mathrsfs{F}]$ a.e., then $\mathit{\Sigma}[X \odot Z] = \mathit{\Sigma}[Y \odot Z]$. 
Conversely, suppose that $\mathit{\Sigma}[X \odot Z] = \mathit{\Sigma}[Y \odot Z]$ for every nonnegative $\mathrsfs{F}$-measurable random variable $Z$. 
Then $\mathit{\Sigma}[\mathit{\Sigma}[ X | \mathrsfs{F} ] \odot 1_A] = \mathit{\Sigma}[Y \odot 1_A]$ for all $A \in \mathrsfs{F}$. 
By Theorem~\ref{sugeno-murofushi} this implies that $\mathit{\Sigma}[ X | \mathrsfs{F} ] = Y$ a.e. 


\eqref{bcj4} Let $Y'$ be the $\mathrsfs{F}$-measurable random variable equal to $\mathit{\Sigma}[X |\mathrsfs{F}] \oplus (\lambda \odot \mathit{\Sigma}[X' |\mathrsfs{F}])$. 
If $Z$ is a nonnegative $\mathrsfs{F}$-measurable random variable then 
\begin{align*}
\mathit{\Sigma}[ Y' \odot Z ] &= \mathit{\Sigma}\left[ (\mathit{\Sigma}[X |\mathrsfs{F}] \odot Z) \oplus (\lambda \odot \mathit{\Sigma}[X' |\mathrsfs{F}] \odot Z) \right] \\
&= \mathit{\Sigma}[ \mathit{\Sigma}[X |\mathrsfs{F}] \odot Z ] \oplus (\lambda \odot \mathit{\Sigma}[ \mathit{\Sigma}[X' |\mathrsfs{F}] \odot Z ]) \\
&= \mathit{\Sigma}[ X \odot Z] \oplus (\lambda \odot \mathit{\Sigma}[ X' \odot Z]) \mbox{, by \eqref{bcj1}, }\\
&= \mathit{\Sigma}[(X \oplus (\lambda \odot X')) \odot Z]. 
\end{align*}
So by \eqref{bcj1} we obtain $Y' = \mathit{\Sigma}[X \oplus (\lambda \odot X') | \mathrsfs{F}]$ a.e. 

\eqref{bcj5} By definition $\mathit{\Sigma}[X \odot 1_A] = \mathit{\Sigma}\left[ \mathit{\Sigma}[X|\mathrsfs{F}] \odot 1_A \right]$ for all $A \in\mathrsfs{F}$. So taking in particular $A = \mathit{\Omega}$ we get $\mathit{\Sigma}[X] = \mathit{\Sigma}\left[\mathit{\Sigma}[X|\mathrsfs{F}]\right]$. 

\eqref{bcj6} This is a direct consequence of \eqref{bcj1}. 

\eqref{bcj3} If $X \leqslant Y$ a.e., then $X \oplus Y = Y$ a.e., hence $\mathit{\Sigma}[ (X \oplus Y) \odot Z] = \mathit{\Sigma}[Y \odot Z]$
 for every nonnegative $\mathrsfs{F}$-measurable random variable $Z$. So by \eqref{bcj1} this shows that $Y = \mathit{\Sigma}[ X \oplus Y | \mathrsfs{F}]$ a.e.
Consequently, 
\begin{align*}
\mathit{\Sigma}[X | \mathrsfs{F}] \oplus Y &= \mathit{\Sigma}[X | \mathrsfs{F}] \oplus \mathit{\Sigma}[Y | \mathrsfs{F}] \mbox{ a.e., by \eqref{bcj6}, } \\
&= \mathit{\Sigma}[X \oplus Y | \mathrsfs{F}] \mbox{ a.e., by \eqref{bcj4}, }\\
&= Y \mbox{ a.e., } 
\end{align*}
so $\mathit{\Sigma}[X | \mathrsfs{F}] \leqslant Y$ a.e. 
\end{proof}


From these properties, Barron et al.\ deduced an ergodic theorem for maxima and, with the concept of maxingales, developed a theory of optimal stopping in $L^\infty$. 

Our new perspective on possibility measures should encourage us to recast possibility theory. The next step would be to see whether convergence theorems given in \cite{Akian95} and \cite{Puhalskii01} remain unchanged.

\section{Conclusion and perspectives}\label{seccon}

In this paper, we have emphasized the link between essential suprema representations and Radon--Nikodym like theorems for the idempotent $\odot$-integral.  
We have shown that the Radon--Nikodym type theorem proved by Sugeno and Murofushi encompasses similar results including those of Agbeko, Barron et al., Drewnowski. 
We have proved a variant of this theorem that generalizes results due to de Cooman, Puhalskii. 
We have also recalled a converse statement to the Sugeno--Murofushi theorem, i.e.\ the characterization of those $\sigma$-maxitive measures satisfying the Radon--Nikodym property as being $\sigma$-$\odot$-finite $\sigma$-principal. 





\begin{acknowledgements}
I am grateful to Colas Bardavid who carefully read a preliminary version of the manuscript and made very accurate suggestions. 
I wish to thank Marianne Akian who made useful remarks and provided a counterexample to \cite[Exercise~II-3.19.1]{vanDeVel93} inserted as Example~\ref{ex:cexVDV}, and Jimmie D.\ Lawson for his advice and comments. 
I also thank two anonymous referees who pointed out some missing references in the original manuscript. 
\end{acknowledgements}

\bibliographystyle{plain}

\def\cprime{$'$} 
  \def\ocirc#1{\ifmmode\setbox0=\hbox{$#1$}\dimen0=\ht0 \advance\dimen0
  by1pt\rlap{\hbox to\wd0{\hss\raise\dimen0
  \hbox{\hskip.2em$\scriptscriptstyle\circ$}\hss}}#1\else {\accent"17 #1}\fi}

\appendix

\section{Some properties of $\sigma$-additive measures}\label{messigadd}


The notions of 
$\sigma$-principal or CCC measures were originally introduced for the study of $\sigma$-additive measures. Recall that a $\sigma$-additive measure $m$ defined on a $\sigma$-algebra $\mathrsfs{B}$ is \textit{CCC} (resp.\ \textit{$\sigma$-principal}) if the $\sigma$-maxitive measure $\delta_m$ is. 
Also, following Segal \cite{Segal51}, $m$ is \textit{localizable} if, for all $\sigma$-ideals $\mathrsfs{I}$ of $\mathrsfs{B}$, there exists some $L \in \mathrsfs{B}$ such that 
\begin{enumerate}
	\item $m(S \setminus L) = 0$, for all $S \in \mathrsfs{I}$; 
	\item if there is some $B \in \mathrsfs{B}$ such that $m(S \setminus B) = 0$ for all $S \in \mathrsfs{I}$, then $m(L \setminus B) = 0$. 
\end{enumerate}


The next theorem establishes a link between these notions for $\sigma$-additive measures. It enlightens the fact that being finite is a very strong condition for a $\sigma$-additive measure (while it is of little consequence for a $\sigma$-maxitive measure). 

\begin{theorem}\label{implications}
Let $(E,\mathrsfs{B})$ is a measurable space and $m$ be a $\sigma$-additive measure on $\mathrsfs{B}$. Consider the following assertions:
\begin{enumerate}
	\item\label{implications1} $m$ is finite, 
	\item\label{implications2} $m$ is $\sigma$-finite, 
	\item\label{implications3} $m$ is $\sigma$-principal, 
	\item\label{implications4} $m$ is CCC, 
	\item\label{implications5} $m$ is localizable.  
\end{enumerate}
Then \eqref{implications1} $\Rightarrow$ \eqref{implications2} $\Rightarrow$ \eqref{implications3} $\Rightarrow$ \eqref{implications4} $\Rightarrow$ \eqref{implications5}. Moreover, \eqref{implications4} $\Rightarrow$ \eqref{implications3} under Zorn's lemma. 
\end{theorem}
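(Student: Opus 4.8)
The plan is to verify each of the four links of the chain separately, and then the reverse implication \eqref{implications4}$\Rightarrow$\eqref{implications3} on its own. The first link, \eqref{implications1}$\Rightarrow$\eqref{implications2}, is immediate: a finite measure is $\sigma$-finite, witnessed by the single covering set $E$. The last two links are soft and essentially classical, so I would concentrate the effort on \eqref{implications2}$\Rightarrow$\eqref{implications3}.

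For \eqref{implications2}$\Rightarrow$\eqref{implications3}, write $E=\bigcup_{n}E_n$ with $m(E_n)<\infty$ and, after replacing $E_n$ by $E_1\cup\dots\cup E_n$, assume the sequence nondecreasing. Fix a $\sigma$-ideal $\mathrsfs{I}$ of $\mathrsfs{B}$. For each $n$ set $\alpha_n=\sup\{\,m(S\cap E_n):S\in\mathrsfs{I}\,\}$, which is finite since $\alpha_n\leqslant m(E_n)$. Pick $S_{n,k}\in\mathrsfs{I}$ with $m(S_{n,k}\cap E_n)\to\alpha_n$, nondecreasing in $k$ after passing to finite unions, and put $L_n=E_n\cap\bigcup_k S_{n,k}\in\mathrsfs{I}$; continuity from below gives $m(L_n)=\alpha_n$. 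Let $L=\bigcup_n L_n\in\mathrsfs{I}$. To check that $L$ witnesses $\sigma$-principality, take $S\in\mathrsfs{I}$; then $S\cap E_n\in\mathrsfs{I}$ and $(S\cap E_n)\cup L_n$ is a member of $\mathrsfs{I}$ contained in $E_n$, whence $\alpha_n+m((S\cap E_n)\setminus L_n)=m((S\cap E_n)\cup L_n)\leqslant\alpha_n<\infty$; so $m((S\cap E_n)\setminus L_n)=0$, and letting $n\to\infty$ gives $m(S\setminus L)=0$. The finiteness of $\alpha_n$, needed in order to cancel it, is exactly where $\sigma$-finiteness is used: a member of $\mathrsfs{I}$ of globally maximal measure could have infinite measure and be useless, which is why one truncates to the pieces $E_n$.

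The remaining steps are lighter. \eqref{implications3}$\Rightarrow$\eqref{implications4} is the general fact that a $\sigma$-principal $\sigma$-maxitive measure is CCC (\cite[Proposition~4.1]{Poncet13e}); directly, if $\{B_i\}_{i\in I}$ were an uncountable pairwise disjoint family with every $m(B_i)>0$, then the collection of $B\in\mathrsfs{B}$ with $m(B\cap B_i)>0$ for at most countably many $i$ is a $\sigma$-ideal containing each $B_i$, and any $\sigma$-principal witness $L$ for it would satisfy $m(B_i\cap L)=m(B_i)>0$ for all $i$, contradicting membership of $L$ in that ideal. For \eqref{implications4}$\Rightarrow$\eqref{implications5} I would cite Segal \cite{Segal51}, or observe that $\mathrsfs{B}$ modulo the $m$-null sets is a Dedekind $\sigma$-complete CCC Boolean algebra, hence Dedekind complete, so that the supremum there of the image of a $\sigma$-ideal $\mathrsfs{I}$ lifts to a set $L\in\mathrsfs{B}$ having both localizability properties. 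Finally, for \eqref{implications4}$\Rightarrow$\eqref{implications3} under Zorn's lemma: given a $\sigma$-ideal $\mathrsfs{I}$, take by Zorn a maximal pairwise disjoint family $\{S_j\}_{j\in J}$ of non-$m$-negligible members of $\mathrsfs{I}$; CCC makes $J$ countable, so $L=\bigcup_{j\in J}S_j\in\mathrsfs{I}$, and if some $S\in\mathrsfs{I}$ had $m(S\setminus L)>0$ then $S\setminus L$ would be a non-negligible member of $\mathrsfs{I}$ disjoint from every $S_j$, contradicting maximality.

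The main obstacle I anticipate is the bookkeeping in \eqref{implications2}$\Rightarrow$\eqref{implications3}: one must run the construction level by level inside the finite-measure pieces, and then combine the $\sigma$-ideal closure properties with continuity from below to see that the limiting set $L$ still belongs to $\mathrsfs{I}$ and dominates every member of $\mathrsfs{I}$ modulo null sets. By contrast \eqref{implications4}$\Rightarrow$\eqref{implications3} is conceptually routine once Zorn's lemma is available, which is exactly why it is flagged with that hypothesis rather than being placed inside the main chain.
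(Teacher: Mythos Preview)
Your proposal is correct and in the same spirit as the paper's sketch. The paper only writes out the finite case explicitly, proving \eqref{implications1}$\Rightarrow$\eqref{implications3} directly via the single supremum $a=\sup\{m(S):S\in\mathrsfs{I}\}$ and an approximating sequence, then dismisses the remaining implications as ``along the same lines as for $\sigma$-maxitive measures.'' Your argument for \eqref{implications2}$\Rightarrow$\eqref{implications3} is precisely the natural localized version of that same exhaustion idea, carried out on each finite-measure piece $E_n$; and your treatments of \eqref{implications3}$\Rightarrow$\eqref{implications4}, \eqref{implications4}$\Rightarrow$\eqref{implications5}, and the Zorn step \eqref{implications4}$\Rightarrow$\eqref{implications3} are the standard arguments the paper implicitly defers to (in particular, \eqref{implications3}$\Rightarrow$\eqref{implications4} is the content of \cite[Proposition~4.1]{Poncet13e}, as you note).
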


\begin{proof}[Sketch of the Proof]
Assume that $m$ is finite, and let us show that $m$ is $\sigma$-principal. Let $\mathrsfs{I}$ be a $\sigma$-ideal of $\mathrsfs{B}$. Let $a = \sup \{ m(S) : S \in \mathrsfs{I} \}$. We can find some sequence $S_n \in \mathrsfs{I}$ such that $m(S_n) \uparrow a$. Defining $L := \cup_n S_n \in \mathrsfs{I}$, we have  $m(L) = a$. If there exists some $S \in\mathrsfs{I}$ such that $m(S \setminus L) > 0$, then $m(S \cup L) > a$ (since $m$ is finite), which contradicts  $S \cup L \in \mathrsfs{I}$. Thus, $m(S \setminus L) = 0$, for all $S \in \mathrsfs{I}$, which gives $\sigma$-principality of $m$. 
The other implications in Theorem~\ref{implications} can be proved along the same lines as for $\sigma$-maxitive measures. 
\end{proof}

\section{Residual semigroups}\label{sec:residual}

An \textit{ordered semigroup} is a semigroup $(S,\odot)$ equipped with a partial order $\leqslant$ compatible with the structure of semigroup, i.e.\ such that $r \leqslant s$ and $r' \leqslant s'$ imply $r \odot r' \leqslant s \odot s'$. 

If $(S,\odot)$ is an ordered semigroup and $r, s \in S$, we say that $r$ is \textit{absolutely continuous with respect to $s$}, written $r \ll_{\odot} s$, if there exists some $t\in S$ such that $r \leqslant t \odot s$. 
We say that $S$ (or $\odot$) is \textit{residual} if for all $r, s \in S$ with $r \ll_{\odot} s$, there is an element of $S$ denoted by $(r/s)_{\odot}$ such that $r \leqslant t \odot s \Leftrightarrow (r/s)_{\odot} \leqslant t$, for all $t \in S$. 
Note that in this situation we have $r \leqslant (r/s)_{\odot} \odot s$. 
A residual semigroup $(S, \odot)$ is \textit{exact} if $r = (r/s)_{\odot} \odot s$ for all $r, s \in S$ with $r \ll_{\odot} s$. 

\begin{examples}
In $\overline{\mathbb{R}}_+$ here is what we have for different choices of semigroup binary operations (recall that $\oplus$ denotes the maximum and $\wedge$ the minimum): 
\begin{itemize}
	\item $r \ll_{\times} s \Leftrightarrow (r = s = 0 \mbox{ or } s\neq 0)$, in which case $(r/s)_{\times} \times s = r$. So $(\overline{\mathbb{R}}_+, \times)$ is an exact residual semigroup. 
	\item $r \ll_{+} s$ always holds, and $(r/s)_{+} = 0 \oplus (r - s)$. So $(\overline{\mathbb{R}}_+, +)$ is a non-exact residual semigroup. 
	\item $r \ll_{\oplus} s$ always holds, and $(r/s)_{\oplus} = 0$ if $r \leqslant s$, $(r/s)_{\oplus} = r$ otherwise. So $(\overline{\mathbb{R}}_+, \oplus)$ is a non-exact residual semigroup. 
	\item $r \ll_{\wedge} s \Leftrightarrow r \leqslant s$, in which case $(r/s)_{\wedge} = r$, so $(\overline{\mathbb{R}}_+, \wedge)$ is an exact residual semigroup.  
\end{itemize}
\end{examples}

\begin{proposition}
Let $(S,\odot)$ be an ordered semigroup. 
If $S$ is residual, then for all nonempty subsets $T$ of $S$ with infimum and all $s \in S$, $\{ t \odot s : t \in T \}$ has an infimum and 
\begin{equation}\label{eq:inf}
\inf_{t \in T} (t \odot s) = (\inf T) \odot s. 
\end{equation}
Conversely, if every non-empty subset of $S$ has an infimum and Equation~\eqref{eq:inf} is satisfied for all nonempty subsets $T$ of $S$ with infimum and all $s \in S$, then $S$ is residual. 
\end{proposition}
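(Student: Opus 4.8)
The plan is to establish the two directions separately, exploiting the defining bijection characterization $r \leqslant t \odot s \Leftrightarrow (r/s)_{\odot} \leqslant t$ of residuality. For the forward direction, assume $S$ is residual, let $T \subseteq S$ be nonempty with $\inf T =: r_0$, and fix $s \in S$. First I would show $(r_0 \odot s)$ is a lower bound of $\{ t \odot s : t \in T\}$: this is immediate from $r_0 \leqslant t$ for all $t \in T$ together with compatibility of $\odot$ with $\leqslant$. The crux is that it is the \emph{greatest} lower bound. So suppose $u$ is any lower bound, i.e.\ $u \leqslant t \odot s$ for all $t \in T$. Then in particular $u \ll_{\odot} s$, so $(u/s)_{\odot}$ exists, and $u \leqslant t \odot s$ is equivalent to $(u/s)_{\odot} \leqslant t$; since this holds for every $t \in T$, we get $(u/s)_{\odot} \leqslant \inf T = r_0$, whence $u \leqslant (u/s)_{\odot} \odot s \leqslant r_0 \odot s$ using monotonicity and the inequality $u \leqslant (u/s)_{\odot}\odot s$ noted right after the definition. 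This shows $r_0 \odot s$ is the infimum, proving Equation~\eqref{eq:inf}.

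For the converse, assume every nonempty subset of $S$ has an infimum and that Equation~\eqref{eq:inf} holds. Given $r, s \in S$ with $r \ll_{\odot} s$, the set $T_{r,s} := \{ t \in S : r \leqslant t \odot s \}$ is nonempty by the definition of $r \ll_{\odot} s$, so it has an infimum; set $(r/s)_{\odot} := \inf T_{r,s}$. I must verify the residuation equivalence $r \leqslant t \odot s \Leftrightarrow (r/s)_{\odot} \leqslant t$. The implication $\Rightarrow$ is trivial since then $t \in T_{r,s}$, so $(r/s)_{\odot} = \inf T_{r,s} \leqslant t$. For $\Leftarrow$, apply Equation~\eqref{eq:inf} with $T = T_{r,s}$: we get $\inf_{t' \in T_{r,s}}(t' \odot s) = (r/s)_{\odot} \odot s$; but $r$ is a lower bound of $\{t' \odot s : t' \in T_{r,s}\}$ by the very definition of $T_{r,s}$, so $r \leqslant (r/s)_{\odot}\odot s$, and then $(r/s)_{\odot} \leqslant t$ together with monotonicity of $\odot$ yields $r \leqslant (r/s)_{\odot}\odot s \leqslant t \odot s$, as desired.

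The main obstacle, such as it is, lies in the forward direction: one has to resist the temptation to define the candidate witness directly and instead run the argument through an \emph{arbitrary} lower bound $u$, using the trick of passing to $(u/s)_{\odot}$ — the point being that residuality converts the family of inequalities $u \leqslant t\odot s$ into the family $(u/s)_{\odot} \leqslant t$, which can then be collapsed by taking $\inf T$. Once this is seen, everything is a routine application of monotonicity and the stated inequality $r \leqslant (r/s)_{\odot} \odot s$; I would only need to be mildly careful that $u \ll_{\odot} s$ genuinely holds (it does, since $u \leqslant t\odot s$ for any single $t \in T$ already exhibits a witness), so that $(u/s)_{\odot}$ is defined. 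I do not expect any difficulty with the absence of further hypotheses (no monotonicity of the order beyond compatibility, no completeness beyond what is assumed in each direction is needed).
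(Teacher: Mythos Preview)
Your proof is correct and follows essentially the same approach as the paper: in the forward direction you take an arbitrary lower bound, pass to its residual to collapse the family of inequalities via $\inf T$, and conclude; in the converse you define $(r/s)_\odot$ as the infimum of $\{t : r \leqslant t \odot s\}$ and verify the residuation equivalence using Equation~\eqref{eq:inf}. The only cosmetic difference is that in the forward direction you deduce $u \leqslant r_0 \odot s$ via the intermediate inequality $u \leqslant (u/s)_\odot \odot s$ plus monotonicity, whereas the paper applies the residuation equivalence directly with $t = \inf T$; both are equally valid.
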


\begin{proof}
First assume that $S$ is residual. 
Let $T$ be a nonempty subset of $S$ with infimum, and let $s \in S$. Then $(\inf T) \odot s$ is a lower-bound of the set $A = \{ t \odot s : t \in T \}$. 
Now let $\ell$ be a lower-bound of $A$. 
Since $T$ is non-empty we have $\ell \ll_{\odot} s$. Moreover, $\ell \leqslant t \odot s$ for all $t \in T$, so that $(\ell / s)_{\odot} \leqslant t$ for all $t \in T$. This shows that $(\ell / s)_{\odot} \leqslant \inf T$, i.e.\ that $\ell \leqslant (\inf T) \odot s$. So $(\inf T) \odot s$ is the greatest lower bound of $A$, i.e.\ its infimum, and we have proved Equation~\eqref{eq:inf}. 

Conversely, assume that every non-empty subset of $S$ has an infimum and that Equation~\eqref{eq:inf} is satisfied, and let $r, s \in S$ such that $r \ll_{\odot} s$. Define $(r / s)_{\odot} = \inf T$, where $T$ is the nonempty set $\{ t \in S : r \leqslant t \odot s \}$. 
Thanks to Equation~\eqref{eq:inf}, the equivalence $r \leqslant t \odot s \Leftrightarrow (r / s)_{\odot} \leqslant t$, for all $t \in S$, is now obvious. 
So $S$ is residual. 
\end{proof}

\end{document}